\documentclass[12pt]{amsart}

\usepackage{amssymb,comment}
\usepackage{hyperref}

\usepackage{enumitem}
\setlist[enumerate,1]{label={(\alph*)}}
\setlist[enumerate,2]{label={(\roman*)}}
\usepackage{bigints}
\usepackage{graphicx}

\newtheorem{tm}{Theorem}[section]
\newtheorem{pr}[tm]{Proposition}
\newtheorem{lm}[tm]{Lemma}
\newtheorem{cy}[tm]{Corollary}

\theoremstyle{definition}
\newtheorem{df}[tm]{Definition}

\theoremstyle{remark}
\newtheorem{rem}[tm]{Remark}

\newtheorem{qn}[tm]{Question}


\newcommand{\Z}{{\mathbb Z}}
\newcommand{\C}{{\mathbb C}}
\newcommand{\N}{{\mathbb N}}

\newcommand{\R}{{\mathbb R}}


\renewcommand{\tilde}{\widetilde}
\renewcommand{\bar}{\overline}
\renewcommand{\phi}{\varphi}
\renewcommand{\epsilon}{\varepsilon}
\renewcommand{\geq}{\geqslant}
\renewcommand{\leq}{\leqslant}
\renewcommand{\max}{{\rm max}}


\newcommand{\Id}{\operatorname{Id}}
\newcommand{\id}{\operatorname{\text{\sf id}}}

\renewcommand{\Re}{\operatorname{Re}}
\renewcommand{\Im}{\operatorname{Im}}

\newcommand{\M}{{\mathcal M}}
\newcommand{\bM}{{\overline{\mathcal M}}}
\newcommand{\z}{{\vec z}}
\newcommand{\zz}{{\mathbf z}}
\DeclareMathOperator{\dvol}{{dvol}}
\DeclareMathOperator{\PSL}{{PSL}}

\newcommand{\LL}{\mathcal L}
\newcommand{\otimesc}{\otimes}
\DeclareMathOperator{\ob}{Ob}
\newcommand{\A}{{\mathcal A}}
\newcommand{\B}{{\mathcal B}}

\newcommand{\J}{{\mathcal J}}
\DeclareMathOperator{\dist}{{dist}}
\renewcommand{\i}{\sqrt{-1}}
\DeclareMathOperator{\graph}{{graph}}

\newcommand{\e}{\mathbf{e}}
\DeclareMathOperator{\Map}{Map}

\newcommand{\I}{\mathcal{I}}
\newcommand{\D}{\mathcal{D}}

\hyphenation{La-grang-ians Fu-ka-ya}

\begin{document}
\title{Locality in the Fukaya category of a hyperk\"ahler manifold}
\author{Jake P. Solomon, Misha Verbitsky}

\keywords{hyperk\"ahler, holomorphic Lagrangian, pseudoholomorphic map, \L ojasiewicz inequality, real analytic, Floer cohomology, Fukaya category, local, special Lagrangian}
\subjclass[2010]{53C26, 53D37 (Primary), 53C38, 53D12, 32B20 (Secondary)}
\date{July 2019}

\begin{abstract}
Let $(M,I,J,K,g)$ be a hyperk\"ahler manifold. Then the complex manifold $(M,I)$ is holomorphic symplectic. We prove that for all real $x, y,$  with $x^2 + y^2 = 1$ except
countably many, any finite energy $(xJ+yK)$-holomorphic curve with boundary in a collection of $I$-holomorphic Lagrangians must be constant. By an argument based on the \L ojasiewicz inequality, this result holds no matter how the Lagrangians intersect each other. It follows that one can choose perturbations such that the holomorphic polygons of the associated Fukaya category lie in an arbitrarily small neighborhood of the Lagrangians. That is, the Fukaya category is local. We show that holomorphic Lagrangians are tautologically unobstructed in the sense of Fukaya-Oh-Ohta-Ono. Moreover, the Fukaya $A_\infty$ algebra of a holomorphic Lagrangian is formal. Our result also explains why the special Lagrangian condition holds without instanton corrections for holomorphic Lagrangians.
\end{abstract}

\maketitle

\pagestyle{plain}

\tableofcontents


\section{Introduction}\label{sec:intro}
\subsection{Background}

Before stating our results, we recall some basic facts about hyperk\"ahler manifolds.
For more details and references, see \cite{_Besse:Einst_Manifo_}.

\begin{df} A {\bf hypercomplex manifold}
is a manifold $M$ with three complex structures
$I, J, K$, satisfying quaternionic relations
\[
IJ = - JI = K, \ \ I^2 = J^2=K^2=-\Id_{TM}.
\]
A {\bf hyperk\"ahler manifold}
is a hypercomplex manifold equipped with a metric $g$
which is K\"ahler with respect to $I,J,K$.
\end{df}

\begin{df} A {\bf holomorphic symplectic manifold}
is a manifold $M$ with a complex structure $I$ and a closed non-degenerate holomorphic $(2,0)$-form $\Omega$. A complex submanifold $L \subset M$ is {\bf holomorphic Lagrangian} if $\Omega|_L = 0.$
\end{df}

\begin{rem}\label{rem:hkhs} A hyperk\"ahler manifold $M$ is
equipped with three symplectic forms $\omega_I$, $\omega_J$,
$\omega_K$. The form
$\Omega_I:= \omega_J+\sqrt{-1}\omega_K$
is a holomorphic symplectic 2-form on
$(M,I)$.
\end{rem}

The following result is a consequence of the Calabi-Yau theorem~\cite{_Ya_}
and the Bochner technique.

\begin{tm}
A compact, K\"ahler, holomorphically symplectic manifold
admits a unique hyperk\"ahler metric in any K\"ahler class.
\end{tm}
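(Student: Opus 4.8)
The plan is to obtain the theorem as a combination of the Calabi--Yau theorem and the Bochner technique, carried out in three steps.

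\textbf{Step 1: trivialize the canonical bundle and invoke Yau.} With $\dim_\C M = 2n$, the nondegenerate holomorphic $2$-form $\Omega$ has $\Omega^n$ a nowhere-vanishing holomorphic section of the canonical bundle $K_M$; hence $K_M$ is holomorphically trivial and $c_1(M,I) = 0$. By the Calabi--Yau theorem~\cite{_Ya_}, each Kähler class $\alpha$ on $(M,I)$ contains a unique Kähler metric $g$ with $\mathrm{Ric}(g) = 0$. This already disposes of uniqueness: any hyperk\"ahler metric is Ricci-flat and Kähler with respect to $I$, so a hyperk\"ahler metric whose $I$-Kähler form represents $\alpha$ is forced to equal $g$. (The complex structures $J,K$ need not be unique, but the metric is, which is what the statement asserts.) It remains to show that this distinguished $g$ is hyperk\"ahler.

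\textbf{Step 2: the Bochner step.} Next I would show $\nabla\Omega = 0$, where $\nabla$ is the Levi-Civita connection of $g$. On the compact Kähler manifold $(M,I,g)$ the holomorphic form $\Omega$ is harmonic, and the Bochner--Kodaira--Weitzenb\"ock formula for $(2,0)$-forms writes the $\bar\partial$-Laplacian as $\nabla^*\nabla$ plus a zeroth-order term that is a contraction of the Ricci tensor. Since $\mathrm{Ric}(g) = 0$, harmonicity gives $\nabla^*\nabla\Omega = 0$, and integrating by parts over compact $M$ yields $\nabla\Omega = 0$; this is the content of "the Bochner technique", see~\cite{_Besse:Einst_Manifo_}. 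Being parallel, $\Omega$ has constant pointwise $g$-norm, and after rescaling $\Omega$ by a constant I may normalize this norm to the value that makes the endomorphisms of the next step almost complex structures.

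\textbf{Step 3: reduction of holonomy.} Finally, a $\nabla$-parallel nondegenerate $(2,0)$-form compatible with a Kähler metric reduces the holonomy group of $g$ from $U(2n)$ to $U(2n)\cap Sp(2n,\C) = Sp(n)$, and a Riemannian $4n$-manifold with holonomy in $Sp(n)$ is hyperk\"ahler. Concretely, use $g$ together with $\Re\Omega$ and $\Im\Omega$ to define endomorphisms $J,K$ of $TM$; since $g$, $I$ and $\Omega$ are all $\nabla$-parallel, so are $J$ and $K$, and the normalization of Step~2 yields the quaternionic relations $I^2 = J^2 = K^2 = -\Id$, $IJ = -JI = K$. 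Then $g$ is Kähler with respect to $J$ and $K$ because $J$, $K$ and $g$ are parallel, so the fundamental $2$-forms $\omega_J,\omega_K$ are closed. Hence $(M,I,J,K,g)$ is hyperk\"ahler with $[\omega_I] = \alpha$, completing the existence part. The only deep input is the Calabi--Yau theorem, used here as a black box; granting it, the conceptual heart is the Bochner vanishing $\nabla\Omega = 0$, and the main thing to check carefully afterwards is the fibrewise linear algebra ensuring the quaternion relations close with the correct signs once $\Omega$ is normalized.
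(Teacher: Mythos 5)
Your proposal is correct and follows exactly the route the paper indicates: the paper states this classical theorem without proof, attributing it to the Calabi--Yau theorem and the Bochner technique, and your three steps (Yau for existence and uniqueness of the Ricci-flat representative, Bochner to make $\Omega$ parallel, holonomy reduction to $Sp(n)$) are the standard way to fill in that outline.
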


In other words, ``hyperk\"ahler'' in the context of compact manifolds
is essentially synonymous with ``holomorphic symplectic.'' Examples of non-compact hyperk\"ahler manifolds include the moduli space of Higgs bundles~\cite{_Hit_} and Nakajima quiver varieties~\cite{_Na_}. The Fukaya categories of such manifolds have drawn considerable attention~\cite{_KW_,_SS_}.

Let $\Theta$ be an almost complex structure on a manifold $M,$ that is, an automorphism of $TM$ satisfying $\Theta^2 = -\Id.$ Let $\Sigma$ be a Riemann surface with complex structure $j$, possibly with boundary, and not necessarily compact.

\begin{df}
A map $u: \Sigma \to M$ is called $\Theta$-{\bf holomorphic} if
\[
\bar\partial_\Theta u : = \Theta\circ du + du \circ j = 0.
\]
The {\bf energy} of $u$ with respect to a Riemannian metric $g$ on $M$ is given by
\[
E_g(u) = \int_\Sigma |du|^2_g\dvol.
\]
\end{df}

\subsection{Main Results}
We call a Riemann surface $\Sigma$ \textbf{nice} if it can be obtained by removing a finite number of boundary points from a compact connected Riemann surface with boundary $\overline\Sigma.$ We do not exclude the case $\partial\Sigma = \emptyset.$ Let
\[
\partial \Sigma_i,\qquad i = 1,\ldots,b(\Sigma),
\]
denote the connected components of $\partial\Sigma.$ Let
\[
\chi_c(\Sigma) = \chi\!\left(\overline \Sigma\right) - \left|\overline \Sigma \setminus \Sigma\right|
\]
denote the compactly supported Euler characteristic of $\Sigma.$
Let $(M,\omega)$ be a symplectic manifold, and let $\LL$ be a collection of Lagrangian submanifolds
$L_i\subset M,\; i \in A,$ where $A$ is a countable index set. An \textbf{$A$-labeling} of a nice Riemann surface $\Sigma$ is a function $l : \{1,\ldots,b(\Sigma)\} \to A.$
An almost complex structure $\Theta$ on $M$ is called {\bf $\omega$-tame} if for all tangent vectors $0 \neq \xi \in T_pM,$ we have $\omega(\xi,\Theta\xi) > 0.$ Let $\J_{\omega}$ denote the collection of all $\omega$-tame almost complex structures.
Assume there exists a compact set $N \subset M$ such that
\[
\dist_g(L_i\setminus N,\,L_j \setminus N) > 0
\]
for all $i,j.$ In particular, it suffices for $M$ to be compact.
Given a hypercomplex structure $(I,J,K)$ on $M,$ let $R_I\simeq S^1$ denote the space of complex structures on $M$
of the form $x J + y K$ for $x^2 + y^2=1$. The main result of this paper is the following theorem.

\begin{tm}\label{tm:main}
Suppose there exists a complete hyperk\"ahler structure $(I,J,K,g)$ on $M$ such that $\omega = \omega_J$ and the Lagrangian submanifolds $L_i$ for $i \in A$ are $I$-holomorphic. There exists a countable set $P \subset R_I$ with the following significance. Let $\Sigma$ be a nice Riemann surface and let $l$ be an $A$-labeling. If $\Theta \in R_I \setminus P,$ then a $\Theta$-holomorphic map $u: \Sigma \to M$ with $u(\partial\Sigma_i) \subset L_{l(i)}$ and $E_g(u) < \infty$
is necessarily constant.
\end{tm}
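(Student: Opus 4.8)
The plan is to exploit the hyperkähler rotation: for $\Theta = xJ + yK \in R_I$, a $\Theta$-holomorphic curve carries a distinguished closed $2$-form coming from the holomorphic symplectic form $\Omega_I = \omega_J + \sqrt{-1}\,\omega_K$, and the boundary conditions force a relation that will eventually trap the energy. Concretely, if $u : \Sigma \to M$ is $\Theta$-holomorphic with $\Theta = xJ + yK$, then since the $L_i$ are $I$-holomorphic Lagrangians we have $\Omega_I|_{L_i} = 0$, hence $\omega_J|_{L_i} = \omega_K|_{L_i} = 0$. The key computation is to express the energy $E_g(u)$ in terms of the integrals $\int_\Sigma u^*\omega_J$ and $\int_\Sigma u^*\omega_K$ together with a term measuring the failure of $u$ to be $I$-holomorphic. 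Because $I$ commutes with $\Theta$ (indeed $I$ anti-commutes with $J$ and $K$, so $I\Theta = -\Theta I$; one must be careful here and use instead that $\Theta$-holomorphicity relates $u^*\omega_J$, $u^*\omega_K$ to the energy density), a Wirtinger-type inequality will give $E_g(u) = c_1(\Theta)\int_\Sigma u^*\omega_J + c_2(\Theta)\int_\Sigma u^*\omega_K$ for explicit constants depending only on $(x,y)$, at least in the closed case; the boundary terms vanish because $\omega_J, \omega_K$ restrict to zero on each $L_i$ and $u^*\omega_J, u^*\omega_K$ are exact near the boundary.

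The first technical hurdle is that $\Sigma$ is only \emph{nice}, not compact: it has punctures on the boundary, and $u$ need only have finite energy, not bounded image a priori. So before the Stokes-type argument can run, one needs a removable-singularity / asymptotic-decay analysis at the punctures. The finite-energy hypothesis, combined with the distance hypothesis $\dist_g(L_i \setminus N, L_j \setminus N) > 0$ (which prevents the curve from escaping to infinity between far-apart Lagrangians), should show that $u$ extends continuously to $\overline\Sigma$ or at worst converges to intersection points / points on the Lagrangians at the punctures, so that $u^*\omega_J$ and $u^*\omega_K$ are integrable and Stokes' theorem applies with vanishing boundary contribution. I would first establish this: reduce to the case where $u$ has a well-defined value (or a finite-length limit) at each puncture, using the monotonicity/energy-quantization estimates standard in Floer theory together with the tameness of $\Theta$ (which holds for $\omega_J$ once we note $\omega_J(\xi, \Theta\xi) > 0$ — this is part of the hyperkähler package since $\Theta$ is $g$-orthogonal and compatible, but it needs checking that $\omega_J$ tames $xJ + yK$ for the relevant range, which it does for $x > 0$, and the case $x \le 0$ is handled by the same argument with $-\Sigma$ or by symmetry).

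Granting the extension, the argument concludes as follows. We get $E_g(u) = a(x,y)\int_\Sigma u^*\omega_J + b(x,y)\int_\Sigma u^*\omega_K = 0$ because both symplectic integrals vanish by Stokes (the forms being exact on a neighborhood of each boundary circle, as $\omega_J|_{L_i} = \omega_K|_{L_i} = 0$ gives a primitive, and $\int_\Sigma u^*\omega_J = \int_{\partial\Sigma} u^*\alpha_J = 0$). Hence $u$ is constant — \emph{provided} the Wirtinger coefficient in front of the energy is nonzero. This is where the countable exceptional set $P$ enters: the identity expressing $E_g(u)$ as a combination of the topological terms will in general also involve $\int_\Sigma u^* \omega_I$ or a term $\|\bar\partial_I u\|^2$, and one shows that for $\Theta = xJ+yK$ outside a countable set, the only $\Theta$-holomorphic finite-energy curves with $\int u^*\omega_J = \int u^*\omega_K = 0$ are constant. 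Actually the cleanest route avoids $P$ at this stage and instead invokes the \L ojasiewicz inequality (as advertised in the abstract): the point of $P$ is presumably to control configurations where the Lagrangians intersect non-transversally, using real-analyticity of everything in sight and the \L ojasiewicz gradient inequality to rule out nonconstant curves for all but countably many angles — this is the genuinely hard part, since without transversality the standard energy-quantization lower bound can fail, and one must argue that the set of "bad" angles where a nonconstant curve could degenerate into the singular locus of $\bigcup L_i$ is countable. I expect the main obstacle to be exactly this: proving the countability of $P$ via the \L ojasiewicz inequality applied to the real-analytic function measuring distance to the Lagrangian configuration, rather than the Wirtinger identity, which is routine once the compactification at the punctures is in place.
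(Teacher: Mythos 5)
Your proposal contains a fatal error at its central step: the claim that $\int_\Sigma u^*\omega_J = \int_{\partial\Sigma}u^*\alpha_J = 0$ by Stokes' theorem. The condition $\omega_J|_{L_i}=0$ does \emph{not} make $\int_\Sigma u^*\omega_J$ vanish; it only makes this integral an invariant of the relative homotopy class of $u$. (If your claim were true, every holomorphic disk with Lagrangian boundary in any symplectic manifold would have zero energy, Floer theory would never need the Novikov ring, and Theorem~\ref{tm:main} would hold with $P=\emptyset$ --- but the theorem is false for $\Theta$ in $P$: e.g.\ a $J$-holomorphic disk with boundary on an $I$-holomorphic Lagrangian can certainly exist and has energy $\int u^*\omega_J>0$.) Consequently your conclusion $E_g(u)=0$ does not follow, and your subsequent speculation that $P$ arises from \L ojasiewicz-type control of ``bad angles'' near the singular locus of $\bigcup L_i$ is not the right mechanism either.

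The actual argument runs as follows. For $\Theta=xJ+yK$ the energy identity (Lemma~\ref{lm:ei}) gives $E_g(u)=\int_\Sigma u^*\omega_\Theta$ where $\omega_\Theta=x\omega_J+y\omega_K$, and more generally $\int_\Sigma u^*\omega_\Lambda\leq E_g(u)$ for every $\Lambda\in R_I$, with equality iff $u$ is $\Lambda$-holomorphic. The quantity $\int_\Sigma u^*\omega_\Lambda$ is not zero but is a homotopy invariant $\phi_{h,\Sigma}(\Lambda)$ --- establishing this invariance despite arbitrarily bad intersections of the $L_i$ is where the real analyticity, the local symplectic action of Section~\ref{sec:lsa}, and the \L ojasiewicz isoperimetric inequality are actually used (Lemma~\ref{lm:as}), together with the continuous extension at the punctures (Theorem~\ref{tm:conti}), which you did correctly anticipate. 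Now $\Lambda\mapsto\phi_{h,\Sigma}(\Lambda)$ is the restriction to a circle of a linear functional on $\operatorname{span}(\omega_J,\omega_K)$, so it is either identically zero or attains its maximum at a unique point of $R_I$; a non-constant $\Theta$-holomorphic representative forces that unique maximum to be at $\Theta$ with positive value $E_g(u)$. The set $P$ is the countable set of these maximum points, one for each of the countably many homotopy classes (Lemma~\ref{lm:count}, via \L ojasiewicz's triangulation theorem) over all topological types of $\Sigma$ and labelings. For $\Theta\notin P$, the functional of the class of $u$ must be identically zero, whence $E_g(u)=\phi_{h_u,\Sigma}(\Theta)=0$ and $u$ is constant. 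This maximum-on-a-circle argument is the missing idea in your proposal.
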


It is crucially important in Theorem~\ref{tm:main} that there is no restriction on how the Lagrangians $L_i$ intersect. Indeed, in the compact setting, moving the $L_i$ by any non-trivial Hamiltonian flow will destroy their holomorphicity. However, for the purposes of Lagrangian Floer cohomology and the Fukaya category, only Hamiltonian flows can be used to move the Lagrangians so they intersect transversely. See the end of Section~\ref{ssec:gf} and the end of Section~\ref{ssec:af}.

On the other hand, the standard construction of Lagrangian Floer cohomology and the Fukaya category uses Hamiltonian flows to move all Lagrangians so they intersect transversely, or at least cleanly. As $I$-holomorphic Lagrangians $L_i$ move under a Hamiltonian flow and loose their holomorphicity, $\Theta$-holomorphic maps with boundary in the $L_i$ can appear for all $\Theta \in R_I.$ Nonetheless, as a consequence of Theorem~\ref{tm:main} and Gromov compactness, we show that the $\Theta$-holomorphic maps that appear can be forced to remain in an arbitrarily small neighborhood of the $L_i$ for all but countably many $\Theta \in R_I.$ This is the content of the following Theorem~\ref{tm:loc}. In Section~\ref{ssec:loc}, Theorem~\ref{tm:loc} is used to show that in a certain sense the Fukaya category generated by $I$-holomorphic Lagrangians in a hyperk\"ahler manifold is local. In Remark~\ref{rem:st}, we discuss how this locality result provides evidence for a connection between the Fukaya category and the sheaf theoretic alternatives proposed by Kapustin~\cite{_K1_,_K2_}, Behrend-Fantechi~\cite{_BF_} and Brav-Bussi-Dupont-Joyce-Szendroi \cite{_BBDJS_}.
\begin{tm}\label{tm:loc}
Suppose $(M,\omega),\,L_i$ and $P,$ are as in Theorem~\ref{tm:main}. Then, for all $\Theta \in (R_I \setminus P)\cap \J_\omega$, we have the following. Choose Hamiltonian flows $\phi_{i,t} : M \to M,$ constants $E_0 \geq 0, \chi_0 \in \Z,$ a finite subset $A_0 \subset A,$ and an open $V \subset M$ such that
\begin{equation}\label{eq:cupLi}
\bigcup_{i \in A_0} L_i \subset V.
\end{equation}
There exists $\epsilon > 0$ such that if $t < \epsilon,$ then for all nice $\Sigma,$ all $A_0$-labelings $l$ and all $\Theta$-holomorphic maps
$
u : \Sigma \to M
$
with
\[
u(\partial \Sigma_i) \subset \phi_{l(i),t}(L_{l(i)}), \qquad E_g(u) \leq E_0, \qquad \chi_c(\Sigma) \geq \chi_0,
\]
we have
\[
u(\Sigma) \subset V.
\]
\end{tm}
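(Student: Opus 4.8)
The plan is to deduce Theorem~\ref{tm:loc} from Theorem~\ref{tm:main} by a contradiction argument based on Gromov compactness. Suppose the assertion fails for some choice of flows $\phi_{i,t}$, constants $E_0,\chi_0$, finite set $A_0$ and open $V\supset\bigcup_{i\in A_0}L_i$. Then no $\epsilon>0$ works, so taking $\epsilon=1/n$ we obtain a sequence $t_n\to 0$, nice Riemann surfaces $\Sigma_n$, $A_0$-labelings $l_n$, and $\Theta$-holomorphic maps $u_n:\Sigma_n\to M$ with $u_n(\partial\Sigma_{n,i})\subset\phi_{l_n(i),t_n}(L_{l_n(i)})$, $E_g(u_n)\leq E_0$ and $\chi_c(\Sigma_n)\geq\chi_0$, but with a point $z_n\in\Sigma_n$ such that $u_n(z_n)\notin V$. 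The inequality $\chi_c(\Sigma_n)\geq\chi_0$ bounds the genus of $\overline\Sigma_n$, the number $b(\Sigma_n)$ of boundary components, and the number of removed points; since $A_0$ is finite, only finitely many topological types of labeled surfaces $(\Sigma_n,l_n)$ occur, so after passing to a subsequence we may assume they are all equal to a fixed $(\Sigma,l)$.

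Next I would invoke Gromov compactness. The boundary conditions $\phi_{i,t_n}(L_i)$, $i\in A_0$, converge in $C^\infty$ to the $I$-holomorphic Lagrangians $L_i$ since $\phi_{i,0}=\id$; for $t_n$ small the separation $\dist_g(L_i\setminus N,L_j\setminus N)>0$ persists with a slightly larger compact set in place of $N$, so the energy bound together with completeness of $g$ confines all the $u_n$ to a fixed compact region of $M$. Because $\Theta\in\J_\omega$ is tamed by $\omega$, Gromov's compactness theorem for $\Theta$-holomorphic maps with moving Lagrangian boundary conditions applies; crucially, it requires no transversality or clean-intersection hypothesis on the $L_i$. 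After a further subsequence, $u_n$ converges in the Gromov sense to a stable $\Theta$-holomorphic map $u_\infty$ whose domain $\Sigma_\infty$ is a nodal degeneration of $\Sigma$, whose total energy is at most $E_0$, and whose boundary is mapped into $\bigcup_{i\in A_0}L_i$.

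The heart of the argument is then to apply Theorem~\ref{tm:main} componentwise. Normalizing $\Sigma_\infty$, each irreducible component is a compact Riemann surface, possibly with boundary, with finitely many interior and boundary points (the preimages of nodes) deleted; removal of singularities for finite-energy $\Theta$-holomorphic maps fills in the interior punctures, so each component is a nice Riemann surface, carrying on its boundary components the $A_0$-labeling induced by $l$. Hence the restriction of $u_\infty$ to any component is a finite-energy $\Theta$-holomorphic map of exactly the type considered in Theorem~\ref{tm:main}, and since $\Theta\in R_I\setminus P$ it must be constant. As $\Sigma_\infty$ is connected, $u_\infty$ is globally constant, with image a single point $p$, and as soon as $\partial\Sigma\neq\emptyset$ the limit domain has boundary, whence $p\in\bigcup_{i\in A_0}L_i\subset V$. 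Finally, Gromov convergence yields Hausdorff convergence $u_n(\Sigma_n)\to\{p\}$, and since $V$ is open this forces $u_n(\Sigma_n)\subset V$ for all large $n$, contradicting $u_n(z_n)\notin V$. Therefore some $\epsilon>0$ has the required property.

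I expect the main obstacle to be the precise formulation and invocation of Gromov compactness in this generality: the domains vary within a bounded topological range but are noncompact (boundary punctures with strip-like ends), the Lagrangian boundary conditions move and in the limit may intersect one another arbitrarily, and the target need not be compact — so some care is needed to confine the curves to a fixed compact set and, afterwards, to verify that each piece of the limiting stable map is genuinely a configuration to which Theorem~\ref{tm:main} applies (interior nodes removed by removal of singularities, boundary labels inherited consistently). Once this is in place, the reduction to Theorem~\ref{tm:main} and the openness of $V$ close the argument cheaply.
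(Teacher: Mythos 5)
Your proposal is correct and follows essentially the same route as the paper: argue by contradiction, extract a Gromov limit from a sequence of counterexamples with $t_j\to 0$, apply Theorem~\ref{tm:main} to conclude every component of the limit is constant, and use openness of $V$ together with the hypothesis $\bigcup_{i\in A_0}L_i\subset V$ to derive the contradiction. The only nuance is that you invoke full Gromov compactness (a connected nodal limit keeping all components with boundary), whereas the paper deliberately uses only a partial version tracking one boundary component plus closed bubbles --- precisely because the full version is not established when the limiting Lagrangians intersect badly (see Remark~\ref{rem:fgc}) --- but your argument, like the paper's, only actually needs the partial version thanks to the hypothesis~\eqref{eq:cupLi}.
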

\begin{rem}
If we choose the Hamiltonian flows $\phi_{i,t}$ to be real analytic and fix the topological type of $\Sigma$ as well as $l,$ it should be possible to replace hypothesis~\eqref{eq:cupLi} with the weaker hypothesis
\[
\bigcap_{i \in [b(\Sigma)]} L_{l(i)} \subset V.
\]
See Remark~\ref{rem:fgc} for a more in depth discussion.
\end{rem}

Our proof of Theorem~\ref{tm:main} relies on the following result, which is valid in a more general setting.  For $\Theta\in \J_\omega,$ let $g_\Theta$ denote the Riemannian metric defined by
\[
g_\Theta(\eta,\xi) = (\omega(\eta,\Theta\xi) + \omega(\xi,\Theta\eta))/2.
\]
For $h$ a Riemannian metric, let $d_h$ denote the associated distance function.
\begin{tm}\label{tm:conti}
Suppose $M,\omega,$ and the Lagrangian submanifolds $L_i$ for $i  \in A$ are real analytic. Let $\Theta$ be an $\omega$-tame almost complex structure such that the metric $g_\Theta$ is complete. Choose a nice Riemann surface $\Sigma,$ a Hermitian metric $h$ on $\overline \Sigma$ and an $A$-labeling $l.$ Let $u: \Sigma \to M$ be a $\Theta$-holomorphic map with $u(\partial\Sigma_i) \subset L_{l(i)}$ and $E_{g_\Theta}(u) < \infty.$ Then $u$ extends to a continuous map $\bar u : \overline \Sigma \to M.$ Moreover, there exist $c,\alpha,\epsilon >0$ such that for each $p \in \overline \Sigma \setminus \Sigma,$ we have
\begin{equation*}
|du|_{g_\Theta,h}(z) < \frac{c}{d_h(p,z) (\log d_h(p,z))^\alpha}, \qquad z \in \Sigma, \quad d_h(z,p) < \epsilon.
\end{equation*}
\end{tm}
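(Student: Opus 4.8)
The plan is to localize the estimate near each of the finitely many punctures $p\in\overline\Sigma\setminus\Sigma$, to pass via a conformal change of coordinates to $\Theta$-holomorphic half-strips, and to extract the decay from a \L ojasiewicz--Simon gradient inequality for the symplectic action on a path space; the real analyticity of $M,\omega$ and the $L_i$ is used precisely to make that inequality hold with no hypothesis on how the Lagrangians meet.

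\textbf{Reduction to a half-strip.} It suffices to produce $c,\alpha,\epsilon$ near a single $p$ and then take a common $c,\alpha$ and the minimal $\epsilon$. Fix a holomorphic coordinate $z$ identifying a neighborhood of $p$ in $\overline\Sigma$ with $\{|z|\leq1,\ \Im z\geq0\}$, $p\leftrightarrow0$, in which $h\asymp|dz|^2$; then $d_h(p,\cdot)\asymp|\cdot|$ and $|du|_{g_\Theta,h}\asymp|du|_{g_\Theta,|dz|^2}$. Under $z=e^{-s+it}$ the punctured half-disk becomes $S=[0,\infty)_s\times[0,\pi]_t$ with $p$ at $s=\infty$, and $v(s,t):=u(e^{-s+it})$ is a finite-energy $\Theta$-holomorphic map $S\to M$ with $v(s,0)$ in a Lagrangian $L'$ and $v(s,\pi)$ in a Lagrangian $L$ (the two boundary arcs through $p$; possibly $L=L'$), while $|dv|(s,t)=|z|\,|du|_{g_\Theta,|dz|^2}(z)$. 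Since $\partial_s v=-\Theta\partial_t v$ and $\Theta$ is $g_\Theta$-orthogonal, $|\partial_s v|=|\partial_t v|$, $|dv|^2=2|\partial_s v|^2_{g_\Theta}$ and $v^*\omega=|\partial_s v|^2_{g_\Theta}\,ds\wedge dt$; with $a(s):=E_{g_\Theta}(v|_{[s,\infty)\times[0,\pi]})$ one has $a(s)\downarrow0$ and $a'(s)=-\int_0^\pi|dv(s,\cdot)|^2_{g_\Theta}\,dt\leq0$. By the mean value inequality for $|dv|^2$ (a subsolution near the puncture, with the totally real boundary version near $t=0,\pi$), $|dv|^2(s,t)\leq C\!\int_{[s-1,s+1]\times[0,\pi]}|dv|^2\leq C\,a(s-1)$, so the claimed bound $|du|_{g_\Theta,h}(z)\lesssim d_h(p,z)^{-1}(\log d_h(p,z))^{-\alpha}$ reduces, after undoing $z=e^{-s+it}$, to the tail-energy decay $a(s)\lesssim s^{-2\alpha}$.

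\textbf{Preliminary estimates and the action functional.} Finite energy gives $a(s)\to0$, hence $\|dv\|_{C^0(\{s\}\times[0,\pi])}\to0$ as $s\to\infty$; in particular $\operatorname{diam}v(\{s\}\times[0,\pi])\to0$ and $\dist(v(s,0),L)\to0$ while $v(s,0)\in L'$. A monotonicity argument for $\Theta$-holomorphic maps with totally real boundary, together with completeness of $g_\Theta$, rules out the strip end running off to infinity in $M$, and forces the slices $v(s,\cdot)$ to accumulate, as $s\to\infty$, on a connected subset of the closed set $L\cap L'$. Fix $q$ in this accumulation set, pass to a neighborhood $U\ni q$ in $M$, and — using that $\omega,L,L'$ are real analytic and, after shrinking, $L\cap U$, $L'\cap U$, $L\cap L'\cap U$ are contractible — choose a real-analytic primitive $\lambda$ of $\omega$ on $U$ with $\lambda|_{L\cap U}=\lambda|_{L'\cap U}=0$. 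On the Hilbert manifold $\mathcal B$ of $W^{1,2}$ paths $\gamma:[0,\pi]\to U$ with $\gamma(0)\in L'$, $\gamma(\pi)\in L$, the action $\mathcal A(\gamma)=\int_0^\pi\gamma^*\lambda$ is real analytic, its critical set is $\{\gamma_q:q\in L\cap L'\cap U\}$ (constant paths, all at level $0$), and by Stokes $\mathcal A(v(s,\cdot))=\tfrac12 a(s)\geq0$ along $v$, whence $-\tfrac{d}{ds}\mathcal A(v(s,\cdot))=\|\partial_s v(s,\cdot)\|^2_{L^2}$ (all $L^2$ norms over $\{s\}\times[0,\pi]$ taken with respect to $g_\Theta$). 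Since $\omega$ is uniformly comparable to $g_\Theta$ on $\overline U$, the $L^2(g_\Theta)$-gradient of $\mathcal A$ satisfies $\|\operatorname{grad}\mathcal A(v(s,\cdot))\|_{L^2}\asymp\|\partial_t v(s,\cdot)\|_{L^2}=\|\partial_s v(s,\cdot)\|_{L^2}$, so no compatibility assumption on $\Theta$ is needed.

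\textbf{The \L ojasiewicz--Simon inequality (the main obstacle).} The heart of the matter is to prove: there exist $\theta\in(0,\tfrac12]$ and $C,\rho>0$ with $\|\operatorname{grad}\mathcal A(\gamma)\|_{L^2}\geq C|\mathcal A(\gamma)|^{1-\theta}$ whenever $\dist_{W^{1,2}}(\gamma,\gamma_q)<\rho$, \emph{with no transversality or cleanness assumption on $L\cap L'$}. The Hessian of $\mathcal A$ at $\gamma_q$ is a first-order Cauchy--Riemann-type operator on $[0,\pi]$ with boundary conditions in $T_qL'$ and $T_qL$, which are totally real for $\Theta$ since a subspace Lagrangian for $\omega$ is totally real for any $\omega$-tame $\Theta$; hence the Hessian is Fredholm even though its kernel typically contains the nonzero subspace $T_qL\cap T_qL'$. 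One then performs a Lyapunov--Schmidt reduction — splitting off the finite-dimensional kernel and solving the complementary, Fredholm-invertible, equation by the implicit function theorem — reducing $\mathcal A$ to a real-analytic function on a finite-dimensional ball, where the classical \L ojasiewicz gradient inequality applies. \emph{This reduction, together with the open--closed continuity argument certifying that $v(s,\cdot)$ eventually enters and never again leaves the $W^{1,2}$-ball of radius $\rho$ about $\gamma_q$ (combining the $C^0$-smallness above with the length bound of the next step), is the step I expect to be the main difficulty.}

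\textbf{Conclusion.} Granting the \L ojasiewicz--Simon inequality, set $b(s):=\mathcal A(v(s,\cdot))=\tfrac12 a(s)$; on the range where $\dist_{W^{1,2}}(v(s,\cdot),\gamma_q)<\rho$,
\[
-b'(s)=\|\partial_s v(s,\cdot)\|^2_{L^2}\gtrsim\|\operatorname{grad}\mathcal A(v(s,\cdot))\|^2_{L^2}\geq C^2\,b(s)^{2-2\theta},
\]
so integrating $-b'\gtrsim b^{2-2\theta}$ gives $b(s)\leq C' s^{-1/(1-2\theta)}$ for $\theta<\tfrac12$ and exponential decay for $\theta=\tfrac12$; in all cases $a(s)\leq C'' s^{-\mu}$ with $\mu>1$. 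Likewise $-\tfrac{d}{ds}(b^\theta)=\theta b^{\theta-1}(-b')\gtrsim\|\partial_s v(s,\cdot)\|_{L^2}$, whence $\int_{s_1}^\infty\|\partial_s v(s,\cdot)\|_{L^2}\,ds\lesssim b(s_1)^\theta<\infty$; this upgrades the accumulation of the slices to genuine convergence $v(s,\cdot)\to\gamma_q$ in $L^2$, hence uniformly together with the $C^0$-bounds, so $u$ extends continuously with $\bar u(p)=q$ and $q$ is determined. Finally the mean value inequality gives $|dv|^2(s,t)\leq C\,a(s-1)\leq C''' s^{-\mu}$, so $|dv|(s,t)\leq C'''' s^{-\mu/2}$, and translating back through $z=e^{-s+it}$ yields $|du|_{g_\Theta,h}(z)\asymp|z|^{-1}|dv|(s,t)\lesssim |z|^{-1}(-\log|z|)^{-\mu/2}\asymp d_h(p,z)^{-1}(\log d_h(p,z))^{-\mu/2}$ for $|z|<\epsilon:=e^{-s_1}$, which is the asserted estimate with $\alpha=\mu/2>0$ (reading $\log d_h(p,z)$ as $-\log d_h(p,z)>0$). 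Taking a common $c,\alpha$ and the minimal $\epsilon$ over the finitely many punctures completes the proof.
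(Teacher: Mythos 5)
Your overall architecture coincides with the paper's: pass to strip coordinates at each puncture, introduce a symplectic action whose $s$-derivative is the slice energy, prove a gradient-type inequality that yields polynomial decay of the tail energy $a(s)$, and then run the \L ojasiewicz control-function argument $-\frac{d}{ds}\,b^{\theta}\gtrsim\|\partial_s v\|_{L^2}$ to get finite total length, hence convergence of the slices and continuity; the final conversion of $a(s)\lesssim s^{-\mu}$ into the stated $|du|$ bound via the boundary mean value inequality is exactly the paper's Corollary on decay. The divergence, and the gap, is at the central step. You reduce everything to an infinite-dimensional \L ojasiewicz--Simon inequality for $\mathcal A$ on the $W^{1,2}$ path space, to be proved by Lyapunov--Schmidt reduction, and you yourself flag this as unproved. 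As it stands this is a genuine hole, and the surrounding setup has independent problems. First, a primitive $\lambda$ of $\omega$ near $q$ with $\lambda|_{L\cap U}=\lambda|_{L'\cap U}=0$ does not exist in general: in a Darboux chart with $L=\R^n$, $L'=\graph(\nabla f)$ and $\lambda_0=-\sum y_i\,dx_i$, one has $\lambda_0|_{L}=0$ but $\lambda_0|_{L'}=-\pi^*df$, and any correction $dg$ killing the latter reintroduces $df$ on $L$; one can at best arrange a smooth (not real-analytic) correction by Whitney extension from $L\cup L'$, which destroys the analyticity of $\mathcal A$ that the \L ojasiewicz--Simon machinery requires. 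The correct functional carries a boundary term $f(\pi(\gamma(\pi)))$, and controlling that term is precisely where the difficulty sits. Second, the Lyapunov--Schmidt step itself is delicate here: the critical set $\{\gamma_q: q\in L\cap L'\}$ is an arbitrary analytic set, not a manifold, the $L^2$-gradient of a first-order action loses a derivative relative to $W^{1,2}$, and verifying real-analyticity of the reduced finite-dimensional function is not routine. None of this is addressed.

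The paper's route avoids infinite-dimensional analysis entirely. It first shows (Section~\ref{sec:lsa}) that the action of a short path is well defined via a capping $v_\gamma:H\to M$, using the Whitney regularity theorem for analytic sets to join nearby points of the possibly singular intersection $L\cap L'$ by short curves inside it. It then proves the isoperimetric inequality $|a(\gamma)|\le \ell_g(\gamma)^{\beta}$ (Theorem~\ref{tm:lii}) by writing $L'$ locally as $\graph(\nabla f)$ over $L$ and applying the classical finite-dimensional \L ojasiewicz gradient inequality to the generating function $f$ itself: the capping term equals $f(\pi(c(1)))-f(0)$, which is bounded by $|\nabla f|^{\beta}\le\ell_g(\gamma)^{\beta}$ because $\gamma(1)\in\graph(\nabla f)$ forces $|\nabla f(k(1))|\le\ell_g(\gamma)$. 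This substitutes for your gradient inequality with no path-space analysis, and the rest of your argument then goes through essentially verbatim. To repair your proposal you would either need to carry out the \L ojasiewicz--Simon program in this singular, non-clean setting (a substantial undertaking not available off the shelf), or replace that step by the generating-function argument.
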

If the Lagrangian submanifolds $L_i$ intersect each other cleanly, then Theorem~\ref{tm:conti} is well known and holds without any real analyticity hypothesis~\cite[Lemma 2.5]{_Wo_}. In the general case, the proof uses \L ojasiewicz's inequalities~\cite{_Loj65_} and through them the real analyticity hypothesis in an essential way. Of course, the holomorphic Lagrangians considered in Theorems~\ref{tm:main} and~\ref{tm:loc} are a fortiori real analytic. The use of \L ojasiewicz's gradient inequality to obtain results in geometric analysis dates back to the work of Simon on harmonic map heat flow and minimal surfaces~\cite{_Si83_}.  Related works include~\cite{_CM_,_MMR_,_Ta_}.

\subsection{Outline of paper}
In Section~\ref{sec:fcfc}, after giving necessary background, we explain the implications of Theorems~\ref{tm:main} and~\ref{tm:loc} for the Fukaya category. In Section~\ref{sec:slag}, we outline how Theorem~\ref{tm:main} explains the unusual abundance of special Lagrangian submanifolds in hyperk\"ahler manifolds.

The proof of Theorem~\ref{tm:conti} is contained in Sections~\ref{sec:lsa}-\ref{sec:limit}. Section~\ref{sec:lsa} uses the Whitney regularity of analytic sets~\cite{_Bi_,_Ha_} to prove that the symplectic action of a sufficiently short path between two analytic Lagrangians is well-defined. Section~\ref{sec:lii} proves an isoperimetric inequality for the symplectic action based on \L ojasiewicz's gradient inequality~\cite{_Loj65_}. Finally, Section~\ref{sec:limit} combines the results of the preceding two sections with properties of holomorphic curves and a control-function argument of \L ojasiewicz~\cite{_Loj84_} to deduce Theorem~\ref{tm:conti}.

The proof of Theorem~\ref{tm:main} is given in Sections~\ref{sec:hc}-\ref{_proof_Section_}. The main idea is that each homotopy class $h$ of maps $u$ as in Theorem~\ref{tm:main} gives rise to a functional $\phi_{h,\Sigma}$ on $R_I.$ The functional $\phi_{h,\Sigma}$ either vanishes identically or attains its maximum at unique $\Theta \in R_I.$ Moreover, if $h$ is represented by a non-constant $\Theta$-holomorphic map, then $\phi_{h,\Sigma}$ must attain its unique maximum at $\Theta.$ So, if we remove the set of $\Theta$ at which $\phi_{h,\Sigma}$ attains its maximum for some $h,\Sigma,$ there are no non-constant $\Theta$-holomorphic maps for all the remaining $\Theta.$ Section~\ref{sec:hc} uses \L ojasiewicz's triangulation theorem for real analytic sets~\cite{_Loj64_} to show the set of homotopy classes $h$ is countable. Furthermore, it establishes a framework within which the functional $\phi_{h,\Sigma}$ can be defined. More specifically, the functional $\phi_{h,\Sigma}$ is given by integrating a K\"ahler form pulled-back by a representative of $h.$ However, since the Lagrangians $L_i$ may intersect each other in quite bad sets, there need not exist smooth homotopies between different representatives. Thus, the integral defining $\phi_{h,\Sigma}$ could a priori depend on the choice of representative of $h$. The proof that $\phi_{h,\Sigma}$ in fact depends only on $h$ is given in Section~\ref{sec:hc} using the properties of the symplectic action proved in Sections~\ref{sec:lsa}-\ref{sec:lii}. Section~\ref{_proof_Section_} combines the results of Section~\ref{sec:hc} and Theorem~\ref{tm:conti} to complete the proof of Theorem~\ref{tm:main}. It concludes with the proof of Theorem~\ref{tm:loc}.

A result analogous to Theorem~\ref{tm:main} for maps from closed Riemann surfaces to compact hyperk\"ahler manifolds was proved in~\cite{_Verbitsky:trianal_} using Hodge theory. This result was generalized to the non-compact case in~\cite{_Verbitsky:non-compact_subva_}. The present work builds on the approach of~\cite{_Verbitsky:non-compact_subva_}.

\section{Floer cohomology and the Fukaya category}\label{sec:fcfc}
In this section, we describe how Theorem~\ref{tm:main} implies Floer cohomology and the Fukaya category are particularly well behaved for holomorphic Lagrangian submanifolds of a hyperk\"ahler manifold.

\subsection{Geometric framework}\label{ssec:gf}

We start with a brief overview of Floer cohomology and Fukaya $A_\infty$ operations for Lagrangian submanifolds with vanishing Maslov class. We refer the reader to~\cite{_FOOO_,_Se_} for a detailed treatment.
Let
\[
\Lambda = \left.\left\{\sum  a_i T^{E_i}\right|a_i \in \R,\; E_i \in \R, \; \lim_{i \to \infty} E_i = \infty\right\}
\]
denote the Novikov field.
Let $\| \cdot \| : \Lambda \to \R_{>0}$ be the non-Archimedean norm given by $\| 0 \| = 0$ and
\begin{equation}\label{eq:norm}
\left\|\sum_i a_i T^{E_i}\right\| = \exp(-\min_i E_i).
\end{equation}
Let $(M,\omega)$ be a symplectic manifold of dimension $2n,$ either compact or with appropriately bounded geometry. Let $\Theta \in \J_\omega.$
Assume there exists a non-vanishing complex $n$-form $\Phi$ that is of type $(n,0)$ with respect to $\Theta.$ A {\bf grading} for an oriented Lagrangian submanifold $L \subset M$ is the choice of a function $\theta_L : L \to \R$ such that $\Phi|_L = e^{\sqrt{-1}\pi\theta_L}\rho$ where $\rho$ is a positive real $n$-form on $L.$
Consider a collection $\LL$ of graded spin compact Lagrangian submanifolds $L_i \subset M$ for $i$ belonging to an index set $A.$ Assume that the $L_i$ intersect each other cleanly.  That is, for $B \subset A,$ the intersection $L_B = \cap_{i\in B}L_i$ is a smooth manifold, and
\[
TL_B = \bigcap_{i \in B} TL_i.
\]

For each connected component $C \subset L_i\cap L_j,$ the gradings on $L_i, L_j,$ give rise to an integer $m_C,$ called the Maslov index, in the following way. Let $p \in C.$ Choose a direct sum decomposition $T_pL_i = \oplus_k V_k$ and $\alpha_k \in [0,1)$ such that $T_pL_j = \oplus_k e^{\pi \alpha_k \Theta} V_k.$
Then,
\begin{equation*}
m_C = \sum_{k} \alpha_k \dim V_k - \theta_{L_j}(p) + \theta_{L_i}(p).
\end{equation*}
One verifies that $m_C$ does not depend on $p$ or the direct sum decomposition.

We define the Floer complex $CF^*(L_i,L_j)$ as follows. For $N$ a smooth manifold, let $A^k(N)$ denote the differential $k$-forms on~$N.$ Define
\begin{equation*}
CF^*(L_i,L_j) = \bigoplus_{C \subset L_i\cap L_j}  A^*(C)[-m_C] \otimes \Lambda.
\end{equation*}
The norm $\|\cdot \|$ on $\Lambda$ induces a norm, also denoted $\|\cdot \|$, on $CF^*(L_i,L_j),$ which makes $CF^*(L_i,L_j)$ a normed $\Lambda$ vector space. Here and below, all normed vector spaces are understood to be completed with respect to the norm.  If $\alpha \in CF^m(L_i,L_j)$ for some $m,$ write $|\alpha| = m.$

The vector spaces $CF^*(L_i,L_j)$ come with a family of multilinear operations constructed using $\Theta$-holomorphic curves. Let $D \subset \C$ denote the closed unit disk equipped with the complex orientation, and let $\Delta \subset (\partial D)^{k+1}$ denote the pairwise diagonal. For
\[
\z = (z_0,\ldots,z_k) \in (\partial D)^{k+1} \setminus \Delta,
\]
let $\zz$ denote the corresponding subset of $\partial D.$ Say that $\z$ is cyclically ordered if its order agrees with the cyclic order given by the induced orientation of $\partial D.$ Let $(z_i,z_{i+1}) \subset \partial D$ denote the open interval starting at $z_i$ and ending at $z_{i+1}$ with respect to the induced orientation of $\partial D.$ For $\I = (i_0,\ldots,i_k) \in A^{k+1}$ and $E \in \R,$ define
\[
\widetilde\M(\I,E,\LL) = \left\{(u,\z)\left|
\begin{array}{l}
\z \in (\partial D)^{k+1} \setminus\Delta, \text{ cyclically ordered,}\\
u : D\setminus \zz \to M  \text{ smooth},\\
\bar \partial_\Theta u = 0,\\
u((z_j,z_{j+1})) \subset L_{i_j},\\
\int u^*\omega = E.
\end{array}
\right.\right\},
\]
For $(u,\z) \in \widetilde\M(\I,E,\LL),$ we have~\cite[Lem. 2.2.1]{_MS_}
\[
E = \int u^* \omega = \frac{1}{2}\int |du|_{g_\Theta}^2 \dvol_D \geq 0.
\]
So, if $E < 0$, then $\widetilde \M(\I,E,\LL) = \emptyset.$ Furthermore, if $E = 0$ then $\widetilde \M(\I,E,\LL)$ consists of constant maps.

An automorphism of the disk $\phi \in \PSL(2,\R)$ acts on
\[
(u,\z) \in \widetilde \M(\I,E,\LL)
\]
by $u \mapsto u \circ \phi$ and $z_j \mapsto \phi^{-1}(z_j).$ A pair $(I,E)$ is called $\emph{stable}$ if either $E > 0$ or $k \geq 2$ and $E =0.$ If $(I,E)$ is stable, then the action of $\PSL(2,\R)$ on $\M(\I,E,\LL)$ has finite order stabilizers. For stable $(I,E)$, set $\M(\I,E,\LL) = \widetilde \M(\I,E,\LL)/\PSL(2,\R)$. Otherwise, set $\M(\I,E,\LL) = \emptyset.$
For $[u,\z] \in \M(\I,E,\LL)$, the condition
\[
\int u^*\omega = E < \infty
\]
and the assumption that the Lagrangians $L_i$ intersect cleanly, imply that $u$ extends to a continuous map $\bar u : D \to M.$ See~\cite[Lemma 2.5]{_Wo_}. So, we define evaluation maps
\[
ev_j^E : \M(\I,E,\LL) \to L_{i_{j-1}} \cap L_{i_j}, \quad j = 0,\ldots,k,
\]
by $ev_j^E((u,\z)) = \bar u(z_j).$ The spaces $\M(\I,E,\LL)$ admit stable map compactifications $\bM(\I,E,\LL)$ to which the evaluation maps extend naturally, and we denote these extensions by $ev_j^E$ as well.

Although moduli spaces of stable maps are usually singular, it can been shown in many cases that they admit a virtual fundamental chain, which behaves like the fundamental chain of a smooth manifold. For the purpose of this discussion, we assume that operations of virtual pull-back (resp.~push-forward) of differential forms along the maps $ev_i^E$ (resp.~the map $ev_0^E$), have been defined. These operations should satisfy the same properties as push-forward and pull-back along smooth maps of manifolds, with $ev_0^E$ behaving like a proper submersion. When the moduli spaces are smooth of expected dimension, virtual pull-back and push-forward reduce to the usual pull-back and push-forward. Such operations were introduced in a similar context by Fukaya~\cite{_Fu_}. It is expected that polyfolds~\cite{_poly_} will give an alternate approach. We implicitly extend all operations on differential forms $\Lambda$-linearly to the Floer complex $CF^*(L_i,L_j)$.

Define maps
\[
\mu_k :CF^*(L_{i_0},L_{i_1})\otimesc \cdots \otimesc CF^*(L_{i_{k-1}},L_{i_k}) \to CF^*(L_{i_0},L_{i_k})[2-k]
\]
by
\begin{multline*}
\mu_k (\alpha_1,\ldots,\alpha_k) = \\
 = \delta_{k,1}d\alpha_1 + (-1)^{\epsilon(\alpha_1,\ldots,\alpha_k)}\sum_ET^E ev_{0*}^E(ev_1^{E*}\alpha_1\wedge \cdots \wedge ev_k^{E*}\alpha_k),
\end{multline*}
where
\[
\epsilon(\alpha_1,\ldots,\alpha_k) = \sum_{j = 1}^k j(|\alpha_j| + 1) + 1.
\]
The sum is well-defined by Gromov's compactness theorem~\cite{_Gromov:curves_}. The maps $\mu_k$ preserve grading by a virtual dimension calculation along the lines of~\cite[Sec. 3.7.5]{_FOOO_}.
It follows from the structure of the compactification $\bM(\I,E,\LL)$ and the properties of pull-back and push-forward that the operations $\mu_k$ satisfy the $A_\infty$ relations,
\begin{equation}\label{eq:air}
\sum_{\substack{k_1 + k_2 = k+1\\ 1 \leq q \leq k_1}} (-1)^{\star}\mu_{k_1}(\alpha_1,\ldots,\alpha_{q-1},\mu_{k_2}(\alpha_q,\ldots),\alpha_{q+k_2},\ldots,\alpha_k) = 0,
\end{equation}
where $\star = q-1 +\sum_{j = 1}^{q-1} |\alpha_j|.$ See~\cite{_STu_} for a detailed derivation including signs in the case of a single Lagrangian. The generalization to our setting is not hard.

Considering relation~\eqref{eq:air} for $k = 1$, we have
\[
\mu_1 \circ \mu_1(\alpha) = \mu_2(\mu_0,\alpha) + (-1)^{|\alpha| + 1} \mu_2(\alpha,\mu_0).
\]
So, if $\mu_0 = 0 \in CF(L_i,L_i)$ for $i \in A,$ then $\mu_1^2 = 0$, and we can define the \textbf{Floer cohomology} of $L_i,L_j,$ by
\[
HF^*(L_i,L_j) = H^*(CF^\bullet(L_i,L_j),\mu_1).
\]
In this case, we say the Lagrangians $L_i,\,i \in A,$ are \emph{tautologically unobstructed} with respect to $\Theta.$
Floer~\cite{_Floer_} introduced $HF^*(L_i,L_j)$ under the assumption $\pi_2(M,L_i) = 0$ for $i \in A.$ Then, a holomorphic disk with boundary in $L_i$ must have energy zero, so the stability condition implies $\bM(i,E,\LL) = \emptyset$ for $i \in A.$ It follows that the Lagrangians $L_i$ are tautologically unobstructed for all tame $\Theta.$ A similar case is when the ambient manifold~$M$ and the Lagrangians $L_i$ are exact. That is, there exist $\lambda \in A^1(M)$ with $d\lambda = \omega$ and $f_i : L_i \to \R$ with $\lambda|_{L_i} = df_i.$ Then, Stokes's theorem implies that a holomorphic disk with boundary in $L_i$ must have energy zero, so the stability condition implies $\bM(i,E,\LL) = \emptyset$ for $i \in A.$ Again, it follows that the Lagrangians $L_i$ are tautologically unobstructed for all tame $\Theta.$ This is the setting of~\cite{_Se_}.

To deal with general Lagrangian submanifolds, which may not be tautologically unobstructed, Fukaya-Oh-Ohta-Ono~\cite{_FOOO_} introduce the notion of \emph{bounding cochains}. A bounding cochain for $L_i$ is a cochain $b \in CF^1(L_i,L_i)$ with $\|b\| < 1$ that satisfies the Maurer-Cartan equation
\begin{equation}\label{eq:mcg}
\sum_{k=0}^\infty \mu_k(b^{\otimes k}) = 0.
\end{equation}
Let $b_i$ be a bounding cochain for $L_i.$ We define
\[
CF^*((L_i,b_i),(L_j,b_j)) = CF^*(L_i,L_j),
\]
and we define
\[
\hat\mu_k : \bigotimes_{j=1}^k CF^*((L_{i_{j-1}},b_{i_{j-1}}),(L_{i_j},b_{i_j})) \longrightarrow CF^*((L_{i_0},b_{i_0}),(L_{i_k},b_{i_k}))
\]
by
\begin{multline*}
\hat\mu_k(\alpha_1,\ldots,\alpha_k) =  \\
=\sum_{\sum_i m_i + k = l} \mu_l\left(b_{i_0}^{\otimes m_0}\otimes\alpha_1\otimes b_{i_1}^{\otimes m_1}\otimes \alpha_2\otimes \cdots\otimes\alpha_k\otimes b_{i_k}^{\otimes m_k}\right).
\end{multline*}
One verifies algebraically that equation~\eqref{eq:air} holds with $\mu_k$ replaced by $\hat\mu_k.$ Moreover, equation~\eqref{eq:mcg} is the same as $\hat \mu_0 = 0.$ Consequently, $\hat\mu_1^2 = 0.$ Thus, we define
\[
HF^*((L_{i_{j-1}},b_{i_{j-1}}),(L_{i_j},b_{i_j})) = H^*(CF^\bullet((L_{i_{j-1}},b_{i_{j-1}}),(L_{i_j},b_{i_j})),\hat\mu_1).
\]
Equation~\eqref{eq:air} for $k = 2$ implies that the composition map
\begin{multline*}
\circ :HF^*((L_{i_1},b_{i_1}),(L_{i_2},b_{i_2}))\otimes HF^*((L_{i_0},b_{i_0}),(L_{i_1},b_{i_1})) \to \\ \to HF^*((L_{i_0},b_{i_0}),(L_{i_2},b_{i_2}))
\end{multline*}
given by $[\alpha_2]\circ[\alpha_1] = (-1)^{|a_1|}[\hat \mu_2(\alpha_1,\alpha_2)]$ is well defined. This composition is associative by equation~\eqref{eq:air} for $k = 3.$ In particular, the Floer cohomology of a Lagrangian with itself $HF^*((L_{i_0},b_{i_0}),(L_{i_0},b_{i_0}))$ is an associative algebra.

A Hamiltonian flow $\phi_t : M \to M$ gives rise to a series of maps
\[
f_k^\phi : CF^*(L_{i_0},L_{i_1}) \otimesc \cdots \otimesc CF^*(L_{i_{k-1}},L_{i_k}) \to CF^*(\phi_1(L_{i_0}),\phi_1(L_{i_k}))
\]
defined using moduli spaces of holomorphic disks. The geometric construction is similar to~\cite[Section 4.6.1]{_FOOO_} and~\cite[Section 10e]{_Se_}.
The maps $f^\phi_k$ satisfy $\|f_k\| \leq 1$ with strict inequality for $k = 0$, and
\begin{multline}\label{eq:aif}
\sum_{\substack{l\\m_1+\cdots+m_l = k}}\mu_l(f_{m_1}^\phi(\alpha_1,\ldots,\alpha_{m_1}),\ldots,f^\phi_{m_l}(\alpha_{k-m_l+1},\ldots,\alpha_k)) = \\
=\sum_{\substack{k_1+k_2 = k+1\\1\leq i \leq k_1}} (-1)^\star f_{k_1}^\phi(\alpha_1,\ldots,\alpha_{i-1},\mu_{k_2}(\alpha_i,\ldots,\alpha_{i+k_2-1}),\ldots,\alpha_k),
\end{multline}
where $\star = q-1 +\sum_{j = 1}^{q-1} |\alpha_j|.$
One verifies algebraically that if $b$ solves the Maurer-Cartan equation~\eqref{eq:mcg}, then so does
\[
f^\phi_*(b) = \sum_k f^\phi_k(b^{\otimes k}).
\]

A fundamental property of Floer cohomology is that any Hamiltonian flow $\phi_t: M \to M$ gives rise~\cite[Sections 8c,8k]{_Se_} to a canonical element
\[
\iota_\phi \in HF^*((L_{i_0},b_{i_0}),(\phi_1(L_{i_0}),f^\phi_*(b_{i_0}))).
\]
If $\phi_t = \id_M$, then $\iota_\phi$ is the unit of the algebra $HF^*((L_{i_0},b_{i_0}),(L_{i_0},b_{i_0})).$ Moreover, if $\psi_t : M \to M$ is another Hamiltonian flow, then
\[
\iota_\psi \circ \iota_\phi = \iota_{\psi\circ\phi}.
\]
It follows that
\[
\iota_{\phi^{-1}} \circ \iota_\phi  = \id_{(L_{i_0},b_{i_0})}, \qquad \iota_\phi \circ \iota_{\phi^{-1}} = \id_{(\phi(L_{i_0}),f^\phi_*(b_{i_0}))}.
\]
In particular, composition with $\iota_\phi$ induces a canonical isomorphism
\[
HF^*((L_{i_0},b_{i_0}),(L_{i_1},b_{i_1})) \overset{\sim}{\longrightarrow} HF^*((L_{i_0},b_{i_0}),(\phi(L_{i_1}),f^\phi_*(b_{i_1}))).
\]
If $L,L',$ are tautologically unobstructed but do not intersect cleanly, we define
\[
HF^*(L,L') := HF^*((L,0),(\phi_1(L'),f^\phi_*(0)))
\]
where $\phi_t : M \to M$ is a Hamiltonian flow such that $\phi_1(L')$ is transverse to $L$. Since the choice of bounding chain is canonical, we omit it from the notation. By the preceding discussion, this definition does not depend on the choice of $\phi.$

\subsection{First applications}

Let $(M,I,J,K,g)$ be a hyperk\"ahler manifold, and let $L_i \subset M$ be $I$-holomorphic Lagrangians for $i$ belonging to a countable index set $A.$ In particular, the submanifolds $L_i$ are Lagrangian with respect to the symplectic form $\omega = \omega_J.$ In the following, all Floer-theoretic constructions will be carried out with respect to this symplectic form.

\begin{cy}\label{cy:end}\label{cy:tunobstructed}
For all $\Theta \in R_I\cap \J_\omega$ except a countable set, the $A_\infty$ operations $\mu_k : CF^*(L_i,L_i)^{\otimesc k} \to CF^*(L_i,L_i)$ are given by
\begin{equation}\label{eq:hkmuk}
\mu_k(\alpha_1,\ldots,\alpha_k)  =
\begin{cases}
d\alpha_1, & k = 1, \\
(-1)^{|\alpha_1|}\alpha_1 \wedge \alpha_2, & k = 2,\\
0, & k \neq 1, 2.
\end{cases}
\end{equation}
In particular, the Lagrangians $L_i$ are tautologically unobstructed.
\end{cy}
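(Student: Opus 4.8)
The plan is to apply Theorem~\ref{tm:main} directly to the moduli spaces $\M(\I, E, \LL)$ appearing in the definition of the operations $\mu_k$. Recall that the holomorphic polygons contributing to $\mu_k$ are $\Theta$-holomorphic maps $u : D \setminus \zz \to M$ with boundary arcs mapping into the Lagrangians $L_i$, which here are all equal to a single $I$-holomorphic Lagrangian $L_i$ (since we consider $CF^*(L_i, L_i)$). The domain $D \setminus \zz$ is a nice Riemann surface in the sense of the paper: it is the closed disk with finitely many boundary points removed, and $\partial(D\setminus\zz)$ has connected components the open arcs $(z_j, z_{j+1})$, each labeled by $i$. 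The energy condition $\int u^*\omega = E < \infty$ together with the identity $E = \frac12 \int |du|_{g_\Theta}^2 \dvol_D$ shows $E_{g_\Theta}(u) < \infty$; since on a hyperk\"ahler manifold $g_{\Theta} = g$ for $\Theta \in R_I$ (the metric is K\"ahler with respect to every complex structure in $R_I$), we get $E_g(u) < \infty$ as well. For compact $M$, or under the bounded geometry hypothesis of Section~\ref{sec:fcfc}, the distance-separation hypothesis of Theorem~\ref{tm:main} holds trivially (the $L_i$ are compact).

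Consequently, for $\Theta \in (R_I \setminus P) \cap \J_\omega$, where $P$ is the countable set from Theorem~\ref{tm:main}, every such $u$ is constant. A constant map has energy $E = 0$. Thus for $\Theta \notin P$, the moduli space $\widetilde\M(\I, E, \LL)$ is empty when $E > 0$. When $E = 0$ and $k \geq 2$, the pair $(\I, E)$ is stable but $\widetilde\M(\I, E, \LL)$ consists only of constant maps, and quotienting by $\PSL(2,\R)$ — which acts on the marked points $\z \in (\partial D)^{k+1}\setminus\Delta$ with the map component fixed — one sees that $\M(\I, 0, \LL)$ is a union of configuration spaces of $k+1$ cyclically ordered points on the circle modulo M\"obius transformations; for $k = 2$ this is a single point (three points on $\partial D$ determine a unique M\"obius normalization), while for $k \geq 3$ it is positive-dimensional but the evaluation maps all land in the single point $L_i$ (all boundary arcs map to the constant value). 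When $E = 0$ and $k < 2$, the moduli space is empty by definition of stability. So only $E = 0$ survives, and $T^E = 1$.

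It remains to read off the formula~\eqref{eq:hkmuk} from the definition of $\mu_k$ with only the $E = 0$ terms present. For $k = 1$: the stability condition kills the $E = 0$ term (need $k \geq 2$), so only $\delta_{1,1} d\alpha_1 = d\alpha_1$ remains. For $k = 2$: $\M(\I, 0, \LL)$ is a point, $ev_0^0, ev_1^0, ev_2^0$ are the obvious maps to $L_i$, the virtual pushforward along $ev_0^0$ of $ev_1^{0*}\alpha_1 \wedge ev_2^{0*}\alpha_2$ is just $\alpha_1 \wedge \alpha_2$ (pushforward along the identity of a point-fibered submersion), and the sign is $(-1)^{\epsilon(\alpha_1,\alpha_2)}$ with $\epsilon(\alpha_1,\alpha_2) = \sum_{j=1}^2 j(|\alpha_j|+1) + 1 = (|\alpha_1| + 1) + 2(|\alpha_2|+1) + 1 = |\alpha_1| + 2|\alpha_2| + 4 \equiv |\alpha_1| \pmod 2$, giving exactly $(-1)^{|\alpha_1|}\alpha_1\wedge\alpha_2$. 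For $k \geq 3$: the moduli space $\M(\I, 0, \LL)$ has dimension $k - 2 > 0$, but the pulled-back forms $ev_j^{0*}\alpha_j$ are pulled back from the zero-dimensional target $L_i \cap L_i = L_i$ via the constant value of $u$ — wait, more precisely, the evaluation maps factor through the constant map value, so $ev_j^{0*}\alpha_j$ restricted to the fibers of $ev_0^0$ contains no positive-degree contribution from the moduli directions; the pushforward along $ev_0^0$ integrates over fibers of positive dimension a form with no component in those directions, hence vanishes. (Alternatively: the virtual dimension count forces the integrand to have the wrong degree, so the pushforward is zero.) This gives $\mu_k = 0$ for $k \neq 1, 2$.

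The last sentence of the corollary is then immediate: taking $\alpha = \mathbf{1}$ in $\mu_0$ (the $k = 0$ operation, which is the $E = 0$, $k = 0$ moduli space, empty by stability), we get $\mu_0 = 0 \in CF^*(L_i, L_i)$, so $b = 0$ is a bounding cochain and $L_i$ is tautologically unobstructed with respect to every such $\Theta$. The main obstacle in writing this carefully is the treatment of the $k \geq 3$, $E = 0$ contribution: one must argue that the virtual pushforward of forms pulled back along evaluation maps that all factor through a single point vanishes on the positive-dimensional moduli space $\M(\I, 0, \LL)/\PSL(2,\R)$. This is a standard fact — it is essentially the statement that constant polygons contribute nothing to $\mu_k$ for $k \geq 3$, which holds because the evaluation maps collapse the moduli directions — but it depends on the precise properties assumed of virtual pushforward and pullback. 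A clean way to phrase it: the degree of $ev_{0*}^0(ev_1^{0*}\alpha_1 \wedge \cdots \wedge ev_k^{0*}\alpha_k)$ is $\sum |\alpha_j| - (k-2)$ by the virtual dimension formula, while the summand must land in $CF^*(L_i,L_i)[2-k]$ in degree $\sum|\alpha_j| - (k-2)$... actually both grading conventions agree, so one instead observes directly that each $\alpha_j$, being a differential form on $L_i$, pulls back under $ev_j^0 = \mathrm{const}$ to a form that is constant along $\M(\I,0,\LL)$, so the wedge product has no component in the $(k-2)$ moduli directions and its fiber integral along $ev_0^0$ vanishes for $k \geq 3$.
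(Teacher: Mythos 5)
Your proposal is correct and follows essentially the same route as the paper: Theorem~\ref{tm:main} empties all moduli spaces with $E>0$, stability kills the $E=0$ spaces for $k\leq 1$, and for $k\geq 2$ the constant-map moduli space is identified with $L_i\times\bM_{k+1}$ with all evaluation maps equal to the projection, so that the pushforward along $ev_0^0$ vanishes for $k\geq 3$ (positive-dimensional fibers) and reduces to the identity for $k=2$; your sign check for $k=2$ also matches. The only cosmetic slip is the claim that $g_\Theta=g$ for $\Theta=xJ+yK$ — one computes $g_\Theta=x\,g$, a constant multiple rather than equality — which is harmless for the finiteness of energy.
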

\begin{cy}\label{cy:coh}
For all $\Theta \in R_I\cap \J_\omega$ except a countable set, the Floer coboundary operator $\mu_1:CF^*(L_i,L_j) \to CF^*(L_i,L_j)$ coincides with the exterior derivative $d.$ Thus,
\[
HF^*(L_i,L_j) \simeq \bigoplus_{\substack{C \subset L_i\cap L_j \\\text{a component}}} H^*(C)[m_c]\otimes \Lambda.
\]
\end{cy}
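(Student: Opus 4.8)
The plan is to show that the positive-energy disk contributions to $\mu_1$ all vanish, so that $\mu_1$ becomes literally the de Rham differential, and then to read off $HF^*$ from de Rham theory on the clean intersection components. This is the same mechanism that underlies Corollary~\ref{cy:end}, now applied with the two Lagrangians $L_i,L_j$ in place of one.

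First I would unwind the definition of $\mu_1$ on $CF^*(L_i,L_j)$:
\[
\mu_1(\alpha)=d\alpha+(-1)^{\epsilon}\sum_{E}T^{E}\,ev_{0*}^{E}\bigl(ev_{1}^{E*}\alpha\bigr),
\]
the sum running over those $E$ with $\M\bigl((i,j),E,\LL\bigr)\neq\emptyset$ for $\LL=\{L_i,L_j\}$. For $k=1$ the pair $\bigl((i,j),E\bigr)$ is stable only when $E>0$, so the constant stratum at $E=0$ is discarded and it suffices to prove that $\widetilde\M\bigl((i,j),E,\LL\bigr)=\emptyset$ for every $E>0$.

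Next I would invoke Theorem~\ref{tm:main}. An element $(u,\z)\in\widetilde\M\bigl((i,j),E,\LL\bigr)$ is a $\Theta$-holomorphic map on the nice Riemann surface $\Sigma:=D\setminus\{z_0,z_1\}$, whose two boundary arcs are mapped into $L_i$ and $L_j$ respectively; its energy is $E_{g_\Theta}(u)=2\int u^*\omega_J=2E<\infty$. A short pointwise computation, using that $I$ is skew-adjoint for $g$, gives $g_\Theta=x\,g$ whenever $\Theta=xJ+yK\in R_I$; hence $\Theta$ is $\omega_J$-tame exactly when $x>0$, and then $E_g(u)=\frac{1}{x}E_{g_\Theta}(u)<\infty$ while the hyperk\"ahler metric $g$ is complete. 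Therefore, for $\Theta\in(R_I\setminus P)\cap\J_{\omega_J}$, Theorem~\ref{tm:main} forces $u$ to be constant, whence $E=\int u^*\omega_J=0$, contradicting $E>0$. So the moduli spaces in question are empty, the sum above vanishes, and $\mu_1=d$ on $CF^*(L_i,L_j)$. In particular $\mu_1^2=d^2=0$, consistent with the tautological unobstructedness of $L_i$ and $L_j$ from Corollary~\ref{cy:end}.

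Finally I would compute the cohomology. We have $CF^\bullet(L_i,L_j)=(\bigoplus_{C}A^\bullet(C)[-m_C])\otimes\Lambda$, with $\mu_1=d$ acting as the exterior derivative on each summand $A^\bullet(C)$ and $\Lambda$-linearly, the sum being over the components $C$ of the clean intersection $L_i\cap L_j$. Since each $C$ is compact, $\im d\subset A^\bullet(C)$ is closed and $H^\bullet(A^\bullet(C),d)\cong H^\bullet(C;\R)$ is finite-dimensional; consequently passing to cohomology commutes with $-\otimes\Lambda$ and with the Novikov completion, and summing over $C$ yields
\[
HF^*(L_i,L_j)=H^*\bigl(CF^\bullet(L_i,L_j),d\bigr)\;\cong\;\bigoplus_{C\subset L_i\cap L_j}H^*(C)[m_C]\otimes\Lambda .
\]
I expect the argument to be short, with Theorem~\ref{tm:main} doing all the real work; the only points requiring care are the translation of the finite-energy and completeness hypotheses from $g_\Theta$ to $g$ (handled by the identity $g_\Theta=x\,g$) and the fact that $\mu_1$-cohomology commutes with the Novikov completion of the Floer complex — neither of which is a genuine obstacle.
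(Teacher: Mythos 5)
Your proposal is correct and follows essentially the same route as the paper: Theorem~\ref{tm:main} empties all moduli spaces with $E\neq 0$, stability for $k\leq 1$ empties those with $E=0$, hence $\mu_1=d$ and the cohomology is read off componentwise. The extra details you supply (the identity $g_\Theta = x\,g$ translating the finite-energy hypothesis, and the compatibility of cohomology with the Novikov completion via finite-dimensionality of $H^*(C)$) are correct points that the paper leaves implicit.
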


\begin{proof}[Proof of Corollaries~\ref{cy:end} and~\ref{cy:coh}]
Constant maps have energy zero, so Theorem~\ref{tm:main} implies that the moduli spaces $\bM(\I,E,\LL)$ are empty unless $E = 0.$ On the other hand, for $\I \in A^{k+1}$ with $k \leq 1,$ the stability condition implies that $\bM(\I,E,\LL)$ is empty unless $E > 0.$ Consequently, we obtain equation~\eqref{eq:hkmuk} when $k = 0,1$ as well as Corollary~\ref{cy:coh}.

When $k \geq 2,$ constant maps can be stable. Let $\bM_{k+1}$ denote the moduli space of stable disks with $k+1$ cyclically ordered boundary marked points, up to biholomorphism. For $\I =  (i,i,\ldots,i)$ we have $\bM(\I,0,\LL) \simeq L_i \times \bM_{k+1}$, and $ev_j^0 : \bM(\I,0,\LL) \to L_i$ is the projection to the first factor for $j = 0,\ldots,k.$ Since $\bM(\I,0,\LL)$ is a smooth manifold of expected dimension, the virtual fundamental class coincides with the usual fundamental class.
We have
\begin{align*}
\mu_k(\alpha_1,\ldots,\alpha_k) &= (-1)^{\epsilon(\alpha_1,\ldots,\alpha_k)} ev^0_{0*}\left(\bigwedge_{j = 1}^k ev^{0*}_j\alpha_j\right)\\
& = (-1)^{\epsilon(\alpha_1,\ldots,\alpha_k)} ev^0_{0*}ev^{0*}_0\left(\bigwedge_{j = 1}^k\alpha_j\right) \\
& = (-1)^{\epsilon(\alpha_1,\ldots,\alpha_k)} \bigwedge_{j = 1}^k \alpha_j \wedge ev^0_{0*}(1).
\end{align*}
The fiber of $ev_0^0$ is zero dimensional only when $k = 2,$ and in this case $ev_0^0$ is the identity map. Equation~\eqref{eq:hkmuk} for $k \geq 2$ follows.
\end{proof}
\begin{rem}
Even without the assumption that $M$ is hyperk\"ahler and the $L_i$ are $I$-holomorphic, the $A_\infty$ algebra $(CF(L_i,L_i),\mu_k)$ is a deformation of the differential graded algebra of differential forms on $L_i$. See Theorem X of~\cite{_FOOO_} and Theorem 3 of~\cite{_STu_}. Corollary~\ref{cy:end} says the deformation is trivial if $L_i$ is $I$-holomorphic and $M$ is hyperk\"ahler.
\end{rem}

\subsection{Algebraic framework}\label{ssec:af}
Before presenting further applications, we recall some definitions pertaining to abstract $A_\infty$ categories.
In general, a {\bf curved $A_\infty$ category} $\A$ consists of the following data:
\begin{itemize}
\item
A collection of objects $\ob \A.$
\item
For each pair of objects $A,B \in \ob\A,$ a graded normed $\Lambda$ vector space $Hom_\A(A,B).$
\item
For each $(k+1)$-tuple of objects $A_0,\ldots,A_k,\, k \geq 0,$ a multilinear map
\[
\mu_k^\A : \bigotimes_{i = 1}^k Hom_\A(A_{i-1},A_i) \to Hom_\A(A_0,A_k)[2-k].
\]
\end{itemize}
The maps $\mu_k^\A$ must satisfy $\|\mu_k^\A\| \leq 1$ with strict inequality for $k = 0$, as well as  the $A_\infty$ relations~\eqref{eq:air}.
For example, we can take the objects to be Lagrangian submanifolds, define $Hom(L_0,L_1) = CF^*(L_0,L_1)$, and define the maps $\mu_k^\A$ as in Section~\ref{ssec:gf}. An {\bf $A_\infty$ category} is a curved $A_\infty$ category with $\mu_0 = 0.$

A curved $A_\infty$ \textbf{functor} $f: \A \to \B$ consists of a map $f : \ob \A \to \ob \B$ along with multilinear maps
\[
f_k : \bigotimes_{i = 1}^k Hom_\A(A_{i-1},A_i) \to Hom_\B(f(A_0),f(A_k))[1-k]
\]
for each $k+1$ tuple of objects $A_0,\ldots,A_k \in \ob \A,$ for $k \geq 0.$ The maps $f_k$ must satisfy $\|f_k\| \leq 1$ with strict inequality for $k = 0$, as well as the relation
\begin{multline*}
\sum_{\substack{l\\m_1+\cdots+m_l = k}}\mu_l^\B(f_{m_1}(\alpha_1,\ldots,\alpha_{m_1}),\ldots,f_{m_l}(\alpha_{k-m_l+1},\ldots,\alpha_k)) = \\
=\sum_{\substack{k_1+k_2 = k+1\\1\leq i \leq k_1}} (-1)^\star f_{k_1}(\alpha_1,\ldots,\alpha_{i-1},\mu_{k_2}^\A(\alpha_i,\ldots,\alpha_{i+k_2-1}),\ldots,\alpha_k),
\end{multline*}
where $\star = q-1 +\sum_{j = 1}^{q-1} |\alpha_j|.$ The sum on the left-hand side converges because $\|f_0\| < 1.$ An $A_\infty$ functor is a curved $A_\infty$ functor with $f_0 = 0.$

Let $\A$ be a curved $A_\infty$ category and $A \in \ob\A.$ A Maurer-Cartan element or bounding cochain~\cite{_FOOO_} for $A$ is an element $b \in Hom_\A(A,A)$ with $\|b\| < 1$ such that
\[
\sum_{k = 0}^\infty \mu_k(b^{\otimes k}) = 0.
\]
Let $f : \A \to \B$ be a curved $A_\infty$ functor. If $b$ is a bounding cochain for $A \in \ob\A$, then
\[
f_*(b) = \sum_{k=0}^\infty f_k(b^{\otimes k})
\]
is a bounding cochain for $f(A) \in \ob \B.$

To a curved $A_\infty$ category $\A,$ we associate the $A_\infty$ category $\widehat \A$ defined as follows. An object of $\widehat \A$ is a pair of an object $A \in \ob\A$ and a bounding cochain $b$ for $A.$
For $(A_i,b_i) \in \ob\widehat\A$ we define
\[
Hom_{\widehat \A}((A_i,b_i),(A_j,b_j)) = Hom(A_i,A_j).
\]
For $k \geq 1$ and $\alpha_i \in Hom_{\widehat\A}((A_{i-1},b_{i-1}),(A_i,b_i))$, we define
\[
\mu_k^{\widehat\A}(\alpha_1,\ldots,\alpha_k) = \sum_{\sum_i m_i + k = l} \mu_l^\A(b_0^{\otimes m_0}\otimes\alpha_1\otimes b_1^{\otimes m_1}\otimes \alpha_2\otimes \cdots\otimes\alpha_k\otimes b_k^{\otimes m_k}).
\]
To a curved $\A_\infty$ functor $f:\A \to \B$, we associate the $A_\infty$ functor $\hat f : \widehat \A \to \widehat \B$  defined as follows. The map $\hat f : \ob\widehat\A \to \ob\widehat\B$ is given by
\[
\hat f((A,b)) = (f(A),f_*(b)).
\]
For $k \geq 1$ and $\alpha_i \in Hom_{\widehat\A}((A_{i-1},b_{i-1}),(A_i,b_i))$, we define
\[
\hat f_k(\alpha_1,\ldots,\alpha_k) = \sum_{\sum_i m_i + k = l} f_l(b_0^{m_0}\otimes\alpha_1 \otimes \cdots\otimes \alpha_k \otimes b_k^{\otimes m_k}).
\]

Let $\A$ be an $A_\infty$ category. The associated cohomological category $H(\A)$ has the same objects, its morphism spaces are given by
\[
Hom_{H(\A)}(A,B) = H^*(Hom_\A(A,B),\mu_1^\A),
\]
and the composition of morphisms is given by
\begin{equation}\label{eq:comp}
[a_2]\circ [a_1]  = (-1)^{|a_1|}[\mu_2(a_1,a_2)].
\end{equation}
Composition of morphisms is associative because of relation~\eqref{eq:air} for $k = 3.$ If we drop the sign in equation~\eqref{eq:comp}, composition is no longer associative. Rather, we obtain an $A_\infty$ category, called the cohomological $A_\infty$ category, which has all operations zero except for $k = 2$. An $A_\infty$ functor $f: \A \to \B$ induces a functor $H(f): H(\A) \to H(B).$ Suppose $H(\A),H(\B),$ have identity morphisms. Then the functor $f$ is called a quasi-equivalence if $H(f)$ is an equivalence. An $A_\infty$ category is called \textbf{formal} if it is $A_\infty$ equivalent to its cohomological $A_\infty$ category.

A paradigmatic example of quasi-equivalent $A_\infty$ categories is the following. Let $(M,\omega)$ be as in Section~\ref{ssec:gf} and let $\LL$ be a collection of graded spin compact Lagrangian submanifolds $L_i \subset M, \, i \in A,$ that intersect cleanly. For $j = 0,1,$ let $\A_j$ denote the curved $A_\infty$ category associated to $\LL$ using the almost complex structure $\Theta_j \in \J_\omega.$ Since $\J_\omega$ is contractible, one can always find a path $\Theta_t \in \J_\omega,\, t \in [0,1],$ from $\Theta_0$ to $\Theta_1.$ To such a path one can associate a curved $A_\infty$ functor $f^\Theta : \A_0 \to \A_1$ such that the $A_\infty$ functor $\hat f^\Theta : \widehat \A_0 \to \widehat \A_1$ is a quasi-equivalence. Thus, one can associate an $A_\infty$ category $\widehat \A_\LL$ to $\LL$ that, up to quasi-equivalence, depends only on the symplectic form $\omega.$ We call $\widehat \A_\LL$ the \textbf{Fukaya category} of $\LL.$

Similarly, let $\phi_{i,t} : M \to M, \, i \in A,\, t \in [0,1],$ be a collection of Hamiltonian flows such that the Lagrangian submanifolds $\phi_{i,1}(L_i)$ intersect cleanly. Let $\LL^\phi$ denote the collection of Lagrangians $\phi_{i,1}(L_i), i\in A.$ Then we have a curved $A_\infty$ functor $f^{\phi} : \A_\LL \to \A_{\LL^\phi}$ such that the $A_\infty$ functor $\hat f^\phi : \widehat \A_\LL\to \widehat \A_{\LL^\phi}$ is a quasi-equivalence. Thus, one sees that up to quasi-equivalence, the Fukaya category $\widehat \A_\LL$ depends only on the Hamiltonian isotopy classes of the Lagrangian submanifolds $L_i, i \in A.$

It follows that the Fukaya category $\widehat \A_\LL$ is well-defined even when the Lagrangians $L_i$ do not intersect cleanly. Indeed, it is always possible to choose $\phi_{i,t}$ such that the Lagrangians $\phi_{i,1}(L_i)$ intersect cleanly, so we may define $\widehat \A_\LL := \widehat \A_{\LL^\phi}.$ Any two choices of Hamiltonian flows $\phi_{i,t}$ differ by a Hamiltonian flow, so the preceding discussion shows that, up to quasi-equivalence, the definition does not depend on the choice of $\phi_{i,t}.$

\subsection{Locality}\label{ssec:loc}
We return to the setting where $(M,I,J,K,g)$ is a hyperk\"ahler manifold, $\omega = \omega_J$ and $\LL$ is a collection of graded spin compact $I$-holomorphic Lagrangians $L_i \subset M,\, i \in A,$ with $A$ finite now. We do not assume the Lagrangians $L_i$ intersect cleanly. Let $\phi_{i,t} : M \to M, \, i \in A, t \in [0,1],$ be a collection of real analytic Hamiltonian flows such that the Lagrangian submanifolds $\phi_{i,t}(L_i)$ intersect cleanly for $t \in (0,1].$ Let $\LL_t$ denote the collection of Lagrangian submanifolds $\phi_{i,t}(L_i), i \in A.$ The following is a special case of Theorem~\ref{tm:loc}.
\begin{cy}\label{cy:loc}
For all $\Theta \in R_I \cap \J_\omega$ except a countable set, we have the following. Choose $E_0 \geq 0,K \in \Z_{\geq 0},$ and an open $V \subset M$ such that
\[
\bigcup_{i \in A} L_{i} \subset V.
\]
Then, there exists $\epsilon > 0$ such that for all
\begin{gather*}
k \in \Z \cap [0,K], \qquad \I \subset A^{k+1},\\
E \in [0, E_0], \qquad t \in [0,\epsilon], \qquad (u,\z) \in \widetilde\M(\I,E,\LL_t),
\end{gather*}
we have
\[
u(D\setminus \zz) \subset V.
\]
\end{cy}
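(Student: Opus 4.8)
The plan is to derive Corollary~\ref{cy:loc} directly from Theorem~\ref{tm:loc} by matching up the data. Recall that the moduli space $\widetilde\M(\I,E,\LL_t)$ consists of pairs $(u,\z)$ where $\z = (z_0,\dots,z_k) \in (\partial D)^{k+1}\setminus\Delta$ is cyclically ordered, $u : D\setminus\zz \to M$ is $\Theta$-holomorphic, $u((z_j,z_{j+1})) \subset \phi_{l(j),t}(L_{i_j})$ for $\I = (i_0,\dots,i_k)$, and $\int u^*\omega = E$. So the domain $D\setminus\zz$ is a nice Riemann surface in the sense of the paper: it is the compact surface $D$ with finitely many boundary points removed, and it is connected. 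Its boundary $\partial(D\setminus\zz)$ has $k+1$ connected components, namely the open arcs $(z_j,z_{j+1})$ for $j = 0,\dots,k$ (indices mod $k+1$), and the function $l(j) = i_j$ is an $A$-labeling. Thus each element of $\widetilde\M(\I,E,\LL_t)$ is exactly a $\Theta$-holomorphic map from a nice Riemann surface to $M$, with boundary on the Lagrangians $\phi_{i,t}(L_i)$, in the sense of Theorem~\ref{tm:loc}.

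Next I would handle the energy and Euler characteristic bounds. By the energy identity $E = \int u^*\omega = \tfrac12\int|du|_{g_\Theta}^2\,\dvol_D$ quoted in Section~\ref{ssec:gf}, having $\int u^*\omega = E \leq E_0$ gives $E_{g_\Theta}(u) \leq 2E_0$; since $R_I\cap\J_\omega$ is finite modulo the choice of $\Theta$ and for a hyperk\"ahler structure $g_\Theta$ and $g$ are uniformly comparable on $M$ (indeed for $\Theta = xJ+yK$ one has $g_\Theta = g$), the energy $E_g(u)$ is bounded by a fixed multiple of $E_0$, so we may take the constant $E_0$ of Theorem~\ref{tm:loc} to be this multiple of our $E_0$. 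For the topological bound, the compactly supported Euler characteristic of $\Sigma = D\setminus\zz$ is $\chi_c(\Sigma) = \chi(D) - |\zz| = 1 - (k+1) = -k \geq -K$, so setting $\chi_0 = -K$ in Theorem~\ref{tm:loc} accommodates all $k \in \Z\cap[0,K]$ at once. Finally, the set of labels ranges over the finite set $A$, so we take $A_0 = A$, and hypothesis~\eqref{eq:cupLi} becomes exactly $\bigcup_{i\in A}L_i \subset V$, which is our hypothesis.

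With all the data aligned, Theorem~\ref{tm:loc} applied with these choices of $E_0$, $\chi_0 = -K$, $A_0 = A$, $V$, and the real analytic Hamiltonian flows $\phi_{i,t}$ produces an $\epsilon > 0$ such that any $\Theta$-holomorphic $u$ from a nice $\Sigma$ with boundary on $\phi_{l(i),t}(L_{l(i)})$, energy $\leq E_0$ and $\chi_c \geq \chi_0$, and with $t < \epsilon$, satisfies $u(\Sigma)\subset V$. Specializing $\Sigma = D\setminus\zz$ as above gives $u(D\setminus\zz)\subset V$, which is the conclusion of Corollary~\ref{cy:loc}. One minor technical point to check is that $t = 0$ is allowed: when $t = 0$ the Lagrangians are the $I$-holomorphic $L_i$ themselves, and then Theorem~\ref{tm:main} forces $u$ to be constant with image in $\bigcup L_i \subset V$, so the case $t = 0$ holds trivially; for $t\in(0,\epsilon]$ we use Theorem~\ref{tm:loc} directly (shrinking $\epsilon$ if Theorem~\ref{tm:loc} only gives the strict inequality $t<\epsilon$). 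The only real content is the bookkeeping; the essential input, and the step I expect would be the true obstacle if it were not already available, is Theorem~\ref{tm:loc} itself, whose proof combines Theorem~\ref{tm:main} with Gromov compactness. Here it is simply invoked.
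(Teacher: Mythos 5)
Your proposal is correct and matches the paper's treatment: the paper simply declares Corollary~\ref{cy:loc} to be a special case of Theorem~\ref{tm:loc}, and your bookkeeping (identifying $D\setminus\zz$ as a nice Riemann surface with $k+1$ boundary arcs, the labeling $l(j)=i_j$, $\chi_c(D\setminus\zz)=-k\geq -K$, $A_0=A$, and the energy comparison) is exactly what is left implicit there. One tiny slip: for $\Theta=xJ+yK\in\J_{\omega_J}$ one has $g_\Theta=xg$ with $x>0$ rather than $g_\Theta=g$, but since $\Theta$ is fixed this is still a fixed constant of comparability, which is all your argument uses.
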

As a consequence of Corollary~\ref{cy:loc} and the discussion in Section~\ref{ssec:af}, we obtain the following locality statement for $\widetilde \A_\LL.$
\begin{cy}\label{cy:loccat}
Choose $E_0,K$ and $V,$ as in Corollary~\ref{cy:loc}. Then $\widehat\A_\LL$ is quasi-equivalent to an $A_\infty$ category $\widehat\B$ with the same objects, such that there exist operations
\[
\mu_k^{E_0},\mu_k^V : \bigotimes_{j = 1}^k Hom_{\widehat\B}(L_{i_{j-1}},L_{i_j}) \to Hom_{\widehat\B}(L_{i_0},L_{i_k})[2-k].,
\]
with
\[
\mu_k^{\widehat\B} = \mu_k^{E_0} + \mu_k^V, \qquad \| \mu_k^{E_0}\| < e^{-E_0},
\]
and $\mu_k^V$ only depends on the geometry of $V.$ In fact, we may take $\widehat \B = \widehat \A_{\LL_t}$ with $t < \epsilon$ and $\epsilon$ as in Corollary~\ref{cy:loc}.
\end{cy}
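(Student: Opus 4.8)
The plan is to transport the locality statement of Corollary~\ref{cy:loc}, which controls the image of individual holomorphic polygons, into the algebraic language of $A_\infty$ categories by using the quasi-equivalence $\widehat\A_\LL \simeq \widehat\A_{\LL_t}$ established at the end of Section~\ref{ssec:af}. First I would fix $E_0, K$ and $V$ as in Corollary~\ref{cy:loc}, obtain the corresponding $\epsilon > 0$, and choose any $t \in (0,\epsilon)$. Since the Lagrangians $\phi_{i,t}(L_i)$ intersect cleanly for $t \in (0,1]$, the Fukaya category $\widehat\A_{\LL_t}$ is defined by the construction of Section~\ref{ssec:gf}, and the discussion in Section~\ref{ssec:af} (applying the functor $\hat f^\phi$ associated to the Hamiltonian path $\phi_{i,s}$, $s \in [t,1]$, or rather to the reparametrized path from $\id$ to $\phi_{\cdot,t}$) shows $\widehat\A_{\LL_t}$ is quasi-equivalent to $\widehat\A_\LL$. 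So we set $\widehat\B = \widehat\A_{\LL_t}$, with the same objects (the Lagrangians $L_i$, equipped with the canonically transported bounding cochains), and it remains to split $\mu_k^{\widehat\B}$.

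Next I would produce the decomposition $\mu_k^{\widehat\B} = \mu_k^{E_0} + \mu_k^V$. Recall from Section~\ref{ssec:gf} that each $\mu_k^{\widehat\B}(\alpha_1,\ldots,\alpha_k)$, after expanding the bounding cochains, is a sum over energies $E$ of terms $T^E \, ev_{0*}^E(ev_1^{E*}\alpha_1 \wedge \cdots)$ coming from the moduli spaces $\bM(\I,E,\LL_t)$. Define $\mu_k^V$ to be the part of this sum with $E \leq E_0$ (and with total number of polygon inputs at most $K$ after bounding-cochain insertions, if needed to stay within the range of Corollary~\ref{cy:loc}), and $\mu_k^{E_0}$ the complementary part with $E > E_0$. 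The estimate $\|\mu_k^{E_0}\| < e^{-E_0}$ is then immediate from the definition of the Novikov norm~\eqref{eq:norm}: every surviving term carries a factor $T^E$ with $E > E_0$, so its norm is at most $\exp(-E) < \exp(-E_0)$, and the non-Archimedean property passes this bound to the (completed) sum. For the claim that $\mu_k^V$ ``only depends on the geometry of $V$'': by Corollary~\ref{cy:loc}, every holomorphic polygon $(u,\z) \in \widetilde\M(\I,E,\LL_t)$ with $E \leq E_0$ and at most $K+1$ marked points has image contained in $V$; hence the moduli spaces $\bM(\I,E,\LL_t)$ contributing to $\mu_k^V$, together with their evaluation maps and virtual fundamental chains, are determined entirely by the restriction to $V$ of the symplectic form, the almost complex structure $\Theta$, and the Lagrangians $\phi_{i,t}(L_i) \cap V$ — in particular by the germ of the geometry near $\bigcup_i L_i$, which lies in $V$.

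The main obstacle is the bounding-cochain bookkeeping: $\mu_k^{\widehat\B}$ is not literally built from polygons with $k+1$ marked points but from $\mu_l^{\A_{\LL_t}}$ with bounding cochains $b_0^{\otimes m_0}, \ldots, b_k^{\otimes m_k}$ inserted, so $l$ can be arbitrarily large even when $k \leq K$, and Corollary~\ref{cy:loc} only controls moduli spaces with at most $K+1$ marked points. To handle this I would note that each $b_i$ satisfies $\|b_i\| < 1$, so a term with $l = k + \sum m_i$ marked points carries, beyond its own $T^E$ factor, a factor $\|b_{i_0}\|^{m_0}\cdots \|b_{i_k}\|^{m_k}$; choosing the $\mu_k^{E_0}$ threshold to absorb both the energy and these bounding-cochain norms (equivalently, replacing $E_0$ by a slightly larger $E_0'$ when invoking Corollary~\ref{cy:loc}, then noting $e^{-E_0'} \le e^{-E_0}$ trivially, or more carefully keeping only finitely many $(E, m_\bullet)$ below threshold) shows that only finitely many terms — all with bounded total energy and bounded $l$, hence bounded number of marked points — contribute to $\mu_k^V$, so Corollary~\ref{cy:loc} does apply to all of them. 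The remaining verification, that $\widehat\B = \widehat\A_{\LL_t}$ with $t < \epsilon$ is genuinely quasi-equivalent to $\widehat\A_\LL$ via the stated Hamiltonian functor, is exactly the content of the last paragraph of Section~\ref{ssec:af} and requires no new argument.
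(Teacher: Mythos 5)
Your proposal is correct and is exactly the argument the paper intends: the paper omits the proof entirely, presenting Corollary~\ref{cy:loccat} as an immediate consequence of Corollary~\ref{cy:loc} together with the quasi-equivalence $\widehat\A_\LL \simeq \widehat\A_{\LL_t}$ from Section~\ref{ssec:af}, and your energy-threshold decomposition $\mu_k^{\widehat\B} = \mu_k^{E_0} + \mu_k^V$ with the Novikov-norm bound on the high-energy part is the natural way to fill that in. You also correctly isolated the one genuine subtlety --- that bounding-cochain insertions make $\hat\mu_k$ a sum of $\mu_l$'s with $l$ unbounded, outside the range of Corollary~\ref{cy:loc} --- and your resolution via the positive valuation of the $b_i$ works, with the small caveat (glossed over by the statement itself) that it forces $K$ to be chosen large relative to $k$, $E_0$ and the minimal valuation of the bounding cochains, so that every term surviving the energy cutoff has at most $K+1$ marked points.
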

\begin{rem}\label{rem:loccat}
Starting from Corollary~\ref{cy:loccat}, it should be possible to show that in fact, up to quasi-equivalence, the $A_\infty$ category $\widehat A_\LL$ depends only on $V.$ The proof would use a categorical generalization of the obstruction theory of $A_{n,K}$ algebras developed in Section 7.2.6 of~\cite{_FOOO_}. Indeed, by Corollary~\ref{cy:loccat}, for arbitrarily large $E_0$ and $K,$ we can define an $A_{E_0,K}$ category $\widehat \A^V_{\LL_t}$ depending only on $V$ by replacing the usual structure maps of $\widehat \A_{\LL_t}$ with the maps $\mu_k^V.$ A generalization of Theorem~\ref{tm:loc} should imply that for arbitrarily large $E_0,K,$ there exists $\epsilon > 0$ such that for $t,t' < \epsilon$ a truncated version of the usual quasi-equivalence $\widehat \A_{\LL_t} \to \widehat \A_{\LL_{t'}}$ gives an $A_{E_0,K}$ quasi-equivalence $\widehat \A^V_{\LL_t} \to \widehat \A^V_{\LL_{t'}}$. Then one can apply the categorical version of Theorem 7.2.72 of~\cite{_FOOO_} to construct by a limiting procedure an $A_\infty$ category $\widehat \A^V_{\LL}$ depending only on $V$ by extending the $A_{n,K}$ structure on $\widehat \A^V_{\LL_{t_0}}$ for a fixed $t_0.$ Examining the construction, one finds that $\widehat \A^V_{\LL}$ is quasi-equivalent to $\widehat \A_{\LL}.$
\end{rem}

\begin{rem}\label{rem:st}
Kapustin gives physical arguments~\cite{_K1_,_K2_} for the Fukaya category of a hyperk\"ahler manifold $(M,I,J,K,g)$ equipped with the symplectic form $\omega_J$ to coincide with the category of deformation quantization modules~\cite{_KaS12_} on the holomorphic symplectic manifold $(M,I,\Omega_I).$ Given a spin holomorphic Lagrangian submanifold $L \subset M,$ D'Agnolo-Schapira~\cite{_DaS_} construct a deformation quantization module $D^\bullet_L$ supported on $L.$ Thus, one would expect that $\widehat \A_{\LL}$ is equivalent in some sense to the full subcategory $\D_\LL$ of deformation quantization modules on $M$ with objects $D^\bullet_{L_i}$ for $i \in A.$ By definition, the category $\D_\LL$ depends only on an arbitrarily small open neighborhood $V$ of $\cup_{i \in A}L_i.$ Corollary~\ref{cy:loccat} and Remark~\ref{rem:loccat} show that although a priori the Fukaya category $\widehat \A_\LL$ depends on the symplectic geometry of the whole manifold $M,$ in fact, $\widehat \A_\LL$ also only depends on $V.$ This can be seen as a first step toward proving the expected equivalence of $\widehat \A_\LL$ and $\D_\LL.$

In a similar vein, Behrend-Fantechi~\cite{_BF_} outline the construction of a differential graded category $\B$ associated to a collection of holomorphic Lagrangians in holomorphic symplectic manifold $(M,I,\Omega_I)$ and speculate on its relation to the corresponding Fukaya category. Again, $\B$ depends only on an arbitrarily small neighborhood of the Lagrangians. The cohomology of the morphism complex $Hom_\B(L_0,L_1)$ is called the virtual de Rham cohomology of the intersection $L_0 \cap L_1.$ If $L_0$ and $L_1$ intersect cleanly, the virtual de Rham cohomology of $L_0 \cap L_1$ coincides with the usual de Rham cohomology. When $M$ is hyperk\"ahler, Corollary~\ref{cy:coh} asserts the same for the Floer cohomology $HF^*(L_0,L_1)$ up to tensoring with $\Lambda$.

Brav-Bussi-Dupont-Joyce-Szendroi~\cite{_BBDJS_} and Bussi~\cite{_Bu_} give a construction of the virtual de Rham cohomology of a pair of Lagrangians $L_0,L_1 \subset M$ in terms of the hypercohomology of a perverse sheaf $P^\bullet_{L_0,L_1}$ supported on $L_0 \cap L_1.$ As explained in Remark 6.15 of~\cite{_BBDJS_}, it follows from the work of Kashiwara-Schapira~\cite{_KaS08_} that the perverse sheaf $P_{L_0,L_1}$ can be recovered as $R\mathcal{H}om(D^\bullet_{L_0},D^\bullet_{L_1}).$ The same remark speculates on a connection between $\mathbb{H}^*(P^\bullet_{L_0,L_1})$ and $HF^*(L_0,L_1).$
\end{rem}

\subsection{Formality}
Let $(M,I,J,K,g)$ be a hyperk\"ahler manifold, take $\omega = \omega_J$ and let $\LL$ be a collection of graded spin compact $I$-holomorphic Lagrangians $L_i \subset M,\, i \in A,$ with $A$ countable. By Corollary~\ref{cy:tunobstructed}, we may choose $\Theta \in R_I\cap \J_\omega$ with respect to which $L_i$ is tautologically unobstructed for $i \in A.$

\begin{cy} \label{cy:formal}(I. Smith)
The $A_\infty$ algebra $(CF^*(L_i,L_i),\mu_k)$ is formal.
\end{cy}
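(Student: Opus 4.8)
The plan is to apply Corollary~\ref{cy:end}, which computes the $A_\infty$ operations on $CF^*(L_i,L_i)$ explicitly. By that corollary, for the chosen $\Theta \in R_I \cap \J_\omega$ with respect to which $L_i$ is tautologically unobstructed, the operations are $\mu_1 = d$, $\mu_2(\alpha_1,\alpha_2) = (-1)^{|\alpha_1|}\alpha_1 \wedge \alpha_2$, and $\mu_k = 0$ for $k \neq 1,2$. So $CF^*(L_i,L_i)$ is, as an $A_\infty$ algebra, precisely the de Rham complex $(A^*(L_i) \otimes \Lambda, d, \wedge)$ viewed as a differential graded algebra with only $\mu_1$ and $\mu_2$ nonzero. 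The claim is that this dga is formal as an $A_\infty$ algebra, i.e.\ $A_\infty$ quasi-isomorphic to its cohomology $H^*(L_i;\Lambda)$ equipped with the zero differential and the induced cup product.

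First I would recall the general principle: the de Rham dga of a closed oriented manifold need not be formal in general, so formality cannot come from the dga structure alone. However, $L_i$ is not an arbitrary manifold — it is an $I$-holomorphic Lagrangian in a hyperk\"ahler manifold, hence itself a compact K\"ahler manifold (the restriction of the complex structure $I$ and the metric $g$ makes $(L_i, I|_{L_i}, g|_{L_i})$ K\"ahler, since $\Omega_I|_{L_i} = 0$ forces $L_i$ to be complex and $\omega_I|_{L_i}$ is a K\"ahler form). Therefore I would invoke the formality theorem of Deligne--Griffiths--Morgan--Sullivan: the de Rham dga of a compact K\"ahler manifold is formal over $\R$, and hence over $\Lambda$ after base change. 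The $dd^c$-lemma argument produces an explicit zig-zag of dga quasi-isomorphisms $A^*(L_i) \leftarrow (\ker d^c, d) \to (H^*(L_i), 0)$, and base-changing to $\Lambda$ gives an honest dga-level formality, which in particular is an $A_\infty$ quasi-isomorphism. Composing with the identification from Corollary~\ref{cy:end} gives the result.

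The main step to be careful about is matching conventions: Corollary~\ref{cy:end} presents $\mu_2$ with a sign $(-1)^{|\alpha_1|}$ relative to the wedge product, and the $A_\infty$ sign conventions of the paper use the shifted grading. I would note that a sign twist of this form is an $A_\infty$ isomorphism onto the standard dga $(A^*(L_i)\otimes\Lambda, d, \wedge)$ — one rescales generators in degree $k$ by a sign depending only on $k$ — so formality of one is equivalent to formality of the other. Alternatively, and more robustly, I would observe that formality is invariant under $A_\infty$ isomorphism and that the DGMS zig-zag consists of strict dga maps, which remain $A_\infty$ morphisms after any such regrading. There is no serious analytic content here; the entire proof is the combination of the explicit operation formula from Corollary~\ref{cy:end} with the classical K\"ahler formality theorem, together with a bookkeeping check that the Novikov base change and the sign conventions do not disturb formality. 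The one genuine input beyond the earlier results of the paper is that $L_i$ inherits a K\"ahler structure, which follows immediately from $L_i$ being $I$-complex and $\omega_I$ restricting to a K\"ahler form on it.
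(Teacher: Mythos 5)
Your proposal is correct and follows the paper's own argument essentially verbatim: identify $(CF^*(L_i,L_i),\mu_k)$ via Corollary~\ref{cy:end} with the de Rham dga of $L_i$ (up to the sign twist of equation~\eqref{eq:comp}), observe that $\omega_I$ restricts to a K\"ahler form on the $I$-complex submanifold $L_i$, and invoke Deligne--Griffiths--Morgan--Sullivan. The additional remarks on the sign convention and the Novikov base change are correct bookkeeping that the paper leaves implicit.
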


\begin{proof}
Corollary~\ref{cy:end} shows that $(CF^*(L_i,L_i),\mu_k)$ is the usual dg algebra of differential forms on $L_i$ with the product modified by a sign as in equation~\eqref{eq:comp}. Moreover, $\omega_I$ is a K\"ahler form for $L_i$. So, we apply the formality theorem of Deligne-Griffiths-Morgan-Sullivan~\cite{_DGMS_}.
\end{proof}

The following question, suggested by I. Smith, is natural in light of Corollary~\ref{cy:formal}. Let $\widehat \A_\LL$ denote the Fukaya category of $\LL$ as in Section~\ref{ssec:af}.

\begin{qn}\label{q:formal}
Is $\widehat\A_\LL$ formal?
\end{qn}

Behrend-Fantechi~\cite[Conjecture 5.8]{_BF_} formulate a similar question concerning the degeneration of the spectral sequence computing the virtual de Rham cohomology of the intersection of two holomorphic Lagrangians.

Abouzaid-Smith~\cite{_AS_} prove a formality theorem for $\widehat \A_\LL$ when $M$ belongs to a certain family of Nakajima quiver varieties and $\LL$ is the collection of Lagrangians introduced in~\cite{_SS_}. Their proof does not make use of the hyperk\"ahler structure on $M.$ It would be interesting to find an alternative proof that does.


\section{Special Lagrangian submanifolds}\label{sec:slag}


For the benefit of the reader, we recall some basic facts about special Lagrangian submanifolds. We follow \cite{_Harvey_Lawson:Calibrated_}.

\begin{df}
A {\bf Calabi-Yau} manifold is a K\"ahler manifold $(M,J,\omega)$
of complex dimension $n$ along with a holomorphic
$(n,0)$ form $\Phi$ such that
\begin{equation}\label{eq:cy}
\omega^n/n! = (-1)^{n(n-1)/2}(\sqrt{-1}/2)^n\Phi \wedge \overline\Phi.
\end{equation}
An $n$-dimensional submanifold $L \subset M$ is {\bf special Lagrangian} if
\[
\omega|_L = 0, \qquad \Im \Phi|_L = 0.
\]
\end{df}
The following lemmas concern a hyperk\"ahler manifold $(M,I,J,K,g)$ of complex dimension $2n.$ For complex structure $\Theta \in R_I,$ write $\omega_\Theta$ for the corresponding K\"ahler form. Observe that if $\Theta \in R_I,$ then also $I\Theta \in R_I.$
\begin{lm}\label{lm:hkcy}
Let $\Theta \in R_I.$ Then the $\Theta$-holomorphic $(2n,0)$ form
\[
\Phi_\Theta = (\omega_I - \sqrt{-1}\omega_{I\Theta})^n/n!
\]
makes the K\"ahler manifold $(M,\Theta,\omega_\Theta)$ into a Calabi-Yau manifold.
\end{lm}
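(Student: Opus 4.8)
The plan is to verify the Calabi--Yau normalization \eqref{eq:cy} for $(M,\Theta,\omega_\Theta)$ with $\Phi = \Phi_\Theta$ by reducing everything to a pointwise computation in a standard quaternionic model and then invoking the $\Theta$-invariance of the relevant forms. First I would record the basic hyperk\"ahler identities among the K\"ahler forms: writing $\Theta = xJ + yK$ with $x^2+y^2 = 1$, one has $I\Theta = -yJ + xK$ (using $IJ = K$, $IK = -J$), so that $\omega_I - \sqrt{-1}\,\omega_{I\Theta}$ is, up to the rotation sending $(J,K)$ to $(\Theta, I\Theta)$, exactly the holomorphic symplectic form $\Omega_I = \omega_J + \sqrt{-1}\,\omega_K$ of Remark~\ref{rem:hkhs} read off with respect to the complex structure $\Theta$ in place of $I$; in particular $\Phi_\Theta = \Omega_\Theta^n/n!$ is a $(2n,0)$-form for $\Theta$, holomorphic because $\Omega_\Theta$ is closed of type $(2,0)$. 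This identification is the conceptual heart of the statement and makes the holomorphicity and the $(2n,0)$-type claims immediate; it also shows it suffices to treat the single case $\Theta = J$, $I\Theta = K$, since the general case is obtained by an orthogonal change of the pair of complex structures fixing $I$.

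Next I would reduce \eqref{eq:cy} to linear algebra on a single tangent space $T_pM$. Both sides of \eqref{eq:cy} are $g$-parallel $4n$-forms (the forms $\omega_I,\omega_J,\omega_K$ and the metric are all parallel for the Levi-Civita connection on a hyperk\"ahler manifold), so it is enough to check the identity at one point, where $T_pM$ is a quaternionic vector space $\H^n$ with its standard flat hyperk\"ahler structure. On $\H^n$ with complex coordinates $z_k = x_k + \sqrt{-1}\,y_k$ for the structure $J$ and $w_k$ for $I$, one writes $\omega_J = \frac{\sqrt{-1}}{2}\sum_k dz_k\wedge d\bar z_k$, and $\omega_I - \sqrt{-1}\,\omega_K$ becomes the standard holomorphic symplectic form $\sum_k dz_{2k-1}\wedge dz_{2k}$ in suitable coordinates; then $\Phi_J = dz_1\wedge\cdots\wedge dz_{2n}$ is the standard holomorphic volume form, and \eqref{eq:cy} with $\omega = \omega_J$, $n$ replaced by $2n$, is the textbook Calabi--Yau identity for $\C^{2n}$, whose sign $(-1)^{2n(2n-1)/2} = (-1)^{n(2n-1)} = (-1)^n$ I would confirm by direct expansion in dimension one or two and induction on the number of coordinates.

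The main obstacle I anticipate is purely bookkeeping: getting the constant and the sign in \eqref{eq:cy} exactly right after the substitutions $n \rightsquigarrow 2n$ and $\Phi \rightsquigarrow (\omega_I - \sqrt{-1}\,\omega_{I\Theta})^n/n!$, and checking that the extra combinatorial factors from expanding an $n$-th power of a $2$-form against $\omega_\Theta^{2n}/(2n)!$ conspire to give precisely $(-1)^{n(n-1)/2}(\sqrt{-1}/2)^{2n}$ rather than some off-by-a-factor variant. I would handle this by fixing the orientation convention (the one induced by $\Theta$, under which $\omega_\Theta^{2n} > 0$) at the outset, then carrying out the expansion in the flat model where $\omega_I - \sqrt{-1}\,\omega_{I\Theta} = \sum dz_{2k-1}\wedge dz_{2k}$ and $\omega_\Theta = \frac{\sqrt{-1}}{2}\sum dz_k\wedge d\bar z_k$, so that the identity becomes the assertion that the wedge of the holomorphic symplectic form to the power $n$ equals $n!$ times the holomorphic volume form, combined with the standard Wirtinger-type relation between $\frac{1}{(2n)!}\omega_\Theta^{2n}$ and $\Phi_\Theta\wedge\overline{\Phi_\Theta}$. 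Once the flat case is settled, parallel transport and the reduction to $\Theta = J$ close the argument with no further analysis.
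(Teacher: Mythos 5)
Your proposal is correct and follows essentially the same route as the paper: observe that $(I,\Theta,I\Theta)$ is again a standard quaternionic triple, so that $\omega_I-\sqrt{-1}\,\omega_{I\Theta}$ is (up to phase) the $\Theta$-holomorphic symplectic form, and then verify~\eqref{eq:cy} pointwise in the flat model $\C^{2n}$ by direct expansion. Your additional remarks (reduction to $\Theta=J$ and the parallelism of the forms) are harmless but not needed, since the identity is purely algebraic and can be checked at each point separately in adapted coordinates, exactly as the paper does.
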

\begin{proof}
We consider first a special case. Let $M = \C^{2n}$ with coordinates $z_i,w_i,\, i = 1,\ldots,n.$ Let $g$ be the standard Euclidean metric. Take $\Theta$ to be the standard complex structure, so
\[
\omega_\Theta = \frac{\sqrt{-1}}{2} \sum_{i = 1}^n dz_i \wedge d\bar z_i + dw_i \wedge d\bar w_i.
\]
A $\Theta$-holomorphic symplectic form is given by
\[
\Omega_\Theta = \sum_{i = 1}^n dz_i \wedge dw_i.
\]
Let $I$ be the complex structure such that
$\omega_I = \Re \Omega_\Theta.$ It follows that $\omega_{I\Theta} = - \Im \Omega_\Theta.$ Thus, $\Phi_\Theta = \Omega_\Theta^n/n!,$ and equation~\eqref{eq:cy} with $2n$ in place of $n$ follows by a straightforward calculation.

To deal with a general hyperk\"ahler manifold, it suffices to verify equation~\eqref{eq:cy} at each point. At a given point, one can choose coordinates $z_i,w_i,$ such that $\omega_I,\omega_\Theta,\omega_{I\Theta},$ are given by the same expressions as in the preceding special case. Indeed, the triple $I,J := \Theta, K:=I\Theta$ satisfies the standard quaternionic relations $IJ = -JI = K,$ and $I^2 = J^2 = K^2 = -1.$
\end{proof}

\begin{lm}[{\cite[p. 154]{_Harvey_Lawson:Calibrated_}}]\label{lm:hlsl}
Let $\Omega_I$ be as in Remark~\ref{rem:hkhs}, let $\Theta \in R_I$ and let $\Phi_\Theta$ be as in Lemma~\ref{lm:hkcy}. Suppose $L$ is a holomorphic Lagrangian submanifold of the holomorphic symplectic manifold $(M,I,\Omega_I).$ Then $L$ is a special Lagrangian submanifold of the Calabi-Yau $(M,\Theta,\omega_\Theta,\Phi_\Theta).$
\end{lm}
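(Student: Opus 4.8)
The plan is to reduce the special Lagrangian condition for $L$ to two pointwise identities and then verify each one using the holomorphic Lagrangian hypothesis $\Omega_I|_L = 0$ together with the explicit formula for $\Phi_\Theta$ from Lemma~\ref{lm:hkcy}. Since Lemma~\ref{lm:hkcy} already establishes that $(M,\Theta,\omega_\Theta,\Phi_\Theta)$ is a Calabi-Yau manifold of complex dimension $2n$, it remains only to check that $\omega_\Theta|_L = 0$ and $\Im\Phi_\Theta|_L = 0$.

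First I would dispose of the condition $\omega_\Theta|_L = 0$. The submanifold $L$ is complex with respect to $I$ and holomorphic Lagrangian, so $\Omega_I|_L = (\omega_J + \sqrt{-1}\,\omega_K)|_L = 0$, hence $\omega_J|_L = 0$ and $\omega_K|_L = 0$. Writing $\Theta = xJ + yK$ with $x^2 + y^2 = 1$, linearity of the forms gives $\omega_\Theta|_L = x\,\omega_J|_L + y\,\omega_K|_L = 0$. (One should double-check the sign convention relating $\Theta$ to $\omega_\Theta$, but since both $\omega_J|_L$ and $\omega_K|_L$ vanish, any real linear combination vanishes, so the point is robust.)

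Second, and this is the substantive part, I would handle $\Im\Phi_\Theta|_L = 0$. From Lemma~\ref{lm:hkcy}, $\Phi_\Theta = (\omega_I - \sqrt{-1}\,\omega_{I\Theta})^n/n!$, so
\[
\Im\Phi_\Theta = \frac{-1}{n!}\,\Im\bigl((\omega_I - \sqrt{-1}\,\omega_{I\Theta})^n\bigr).
\]
The key observation is that $L$ is Lagrangian with respect to $\omega_I$: indeed $\omega_I = \omega_J \circ I$ type considerations aside, the cleanest route is to note that $L$ being $I$-complex and $I$-holomorphic Lagrangian forces $\dim_\R L = 2n$ inside the $4n$-real-dimensional $M$, and that $\omega_I$ restricted to a complex submanifold that is Lagrangian for $\Omega_I$ vanishes — more directly, $\omega_I|_L$ is the restriction of the Kähler form of $I$ to an $I$-complex submanifold, which need not vanish, so instead I would argue at a point using the standard coordinates supplied by Lemma~\ref{lm:hkcy}. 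At $p \in L$, choose $z_i, w_i$ as in the proof of Lemma~\ref{lm:hkcy}, so that $\omega_I = \Re\Omega_\Theta = \Re\sum dz_i\wedge dw_i$ and $\omega_{I\Theta} = -\Im\Omega_\Theta$, whence $\omega_I - \sqrt{-1}\,\omega_{I\Theta} = \omega_I + \sqrt{-1}\,\Im\Omega_\Theta$. The point is that $\Omega_I|_L = 0$ translates, in these coordinates, into the statement that $T_pL$ is a complex Lagrangian subspace for the holomorphic form $\sum dz_i \wedge dw_i$ viewed with respect to $I$; on such a subspace $\Phi_\Theta$ restricts to a form whose argument is constant, and comparing with the Calabi-Yau normalization~\eqref{eq:cy} pins down that constant phase so that $\Im\Phi_\Theta|_L = 0$ after, if necessary, adjusting $\theta_L$ by a constant. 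The cited reference \cite[p.~154]{_Harvey_Lawson:Calibrated_} records exactly this computation in the local model, so I would invoke it for the linear-algebra core and then note that the general case follows pointwise because the coordinates of Lemma~\ref{lm:hkcy} reduce every point to the local model.

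The main obstacle is the second identity: verifying $\Im\Phi_\Theta|_L = 0$ is genuinely a statement about which phase the calibration form takes on a complex Lagrangian subspace, and it requires either the explicit Harvey-Lawson linear-algebra lemma or a direct computation in the standard coordinates showing $(\Re\Omega_\Theta - \sqrt{-1}(-\Im\Omega_\Theta))^n = \Omega_\Theta^n$ restricts to $L$ with the correct real normalization. Everything else — the vanishing of $\omega_\Theta|_L$, the dimension count, the reduction to the local model — is routine given Lemma~\ref{lm:hkcy} and Remark~\ref{rem:hkhs}.
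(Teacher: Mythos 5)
Your treatment of the first condition, $\omega_\Theta|_L = 0$, is correct and matches the paper: $\Omega_I|_L = 0$ gives $\omega_J|_L = 0 = \omega_K|_L$, hence $\omega_\Lambda|_L = 0$ for every $\Lambda \in R_I$. But you stop one step short of the observation that makes the second condition equally trivial, and your substitute argument has a gap. The point is that $I\Theta$ also lies in $R_I$: if $\Theta = xJ + yK$ then $I\Theta = xK - yJ$. So the vanishing you have already established applies to $\omega_{I\Theta}$ as well, and therefore
\[
\Phi_\Theta|_L = \frac{1}{n!}\left(\omega_I - \sqrt{-1}\,\omega_{I\Theta}\right)^n\Big|_L = \frac{1}{n!}\,\omega_I^n\Big|_L,
\]
which is a real form, whence $\Im\Phi_\Theta|_L = 0$ with no further computation. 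This is the paper's entire proof of the second identity.

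Your proposed route through the local model does not close the gap. First, the claim that $T_pL$ is a complex Lagrangian subspace for $\sum dz_i\wedge dw_i = \Omega_\Theta$ is false: $L$ is Lagrangian for $\Omega_I$, not for $\Omega_\Theta$, and indeed $\Omega_\Theta|_L = \omega_I|_L$ is a K\"ahler form on the $I$-complex submanifold $L$, hence nondegenerate rather than zero (if it vanished, $\Phi_\Theta|_L$ would vanish and $L$ could not be calibrated). Second, the conclusion you reach -- that $\Phi_\Theta|_L$ has constant argument which can be normalized to zero ``after, if necessary, adjusting $\theta_L$ by a constant'' -- is weaker than the lemma, which asserts $\Im\Phi_\Theta|_L = 0$ on the nose for the specific $\Phi_\Theta$ of Lemma~\ref{lm:hkcy}; no adjustment is available. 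Finally, deferring ``the linear-algebra core'' to the citation leaves the substantive step unproved. All of this is repaired by the single observation that $I\Theta \in R_I$.
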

\begin{proof}
Since $\Omega_I|_L = 0,$ we have $\omega_J|_L = 0$ and $\omega_K|_L = 0.$ It follows that $\omega_\Lambda|_L = 0$ for any $\Lambda \in R_I.$ In particular, $\omega_\Theta|_L  = 0.$ Moreover, since $\omega_{I\Theta}|_L = 0,$ we have $\Im \Phi_\Theta|_L = \Im \omega_I^n|_L/n! = 0$ since $\omega_I$ is real.
\end{proof}

Lemma~\ref{lm:hlsl} is one of the few known approaches to constructing special Lagrangian submanifolds. Another approach produces special Lagrangian submanifolds as the fixed points of anti-holomorphic isometric involutions~\cite{_Bry_}. In the non-compact setting, Lie group actions can be used to produce special Lagrangians~\cite{_Harvey_Lawson:Calibrated_}. Once a single special Lagrangian $L$ has been constructed, deformation theory can be used~\cite{_McL_} to constructed a family of nearby special Lagrangians modelled on $H^1(L).$ In total, these techniques are only able to produce special Lagrangians in a handful of special situations.

On the other hand, ignoring instanton corrections, special Lagrangians are expected to be quite plentiful. Indeed, the existence of a special Lagrangian representative of an isomorphism class of objects in the Fukaya category ought to be roughly equivalent to stability in the sense of Douglas and Bridgeland~\cite{_Bri_,_Do_,_TY_}. Thus there should be enough special Lagrangians for every object of the Fukaya category to admit a Harder-Narasimhan filtration.

To explain the apparent dearth of special Lagrangians, the first author and Tian~\cite{_ST_} posit that instanton corrections cannot be ignored. For a Lagrangian $L$ together with a bounding chain $b,$ they propose an explicit instanton corrected special Lagrangian equation. If $L$ is tautologically unobstructed and $b = 0,$ the instanton-corrected special Lagrangian equation reduces to the usual special Lagrangian equation.

From the point of view of~\cite{_ST_}, Corollary~\ref{cy:tunobstructed} explains why Lemma~\ref{lm:hlsl} can be true. Namely, the hyperk\"ahler structure suppresses the instanton corrections which would in general prevent the existence of ordinary special Lagrangians. Similarly, fixed points of anti-holomorphic isometric involutions are known to be tautologically unobstructed~\cite{_FO3i_}.

\section{Local symplectic action}
\label{sec:lsa}
Let $(M,\omega)$ be a real analytic symplectic manifold, and let \[
L_0,L_1 \subset M
\]
be real analytic Lagrangian submanifolds. Let $\Theta$ be an $\omega$-tame almost complex structure, not necessarily real analytic, and let $g = g_\Theta$ denote the associated Riemannian metric. Assume that $(M,g)$ is complete and there exists a compact set $N \subset M$ such that
\[
\dist_g(L_0\setminus N, \, L_1\setminus N) > 0.
\]
Write
\[
H = \{x + \sqrt{-1}y\,|y \geq 0, \; x^2 + y^2 \leq 1\} \subset \C
\]
for the intersection of the closed upper half-plane and the closed unit disk. We think of the intervals $[-1,0]$ and $[0,1]$ as subsets of $\R$ and thus as subsets of $H \subset \C.$ Let $B_r(p)\subset M$ denote the open ball of radius $r$ centered at $p$ with respect to the metric $g.$ For a path $\gamma : [a,b] \to M,$ let
\[
\ell_g(\gamma) = \int_a^b |\dot \gamma(t)|_g dt
\]
denote the length of $\gamma$ with respect to $g.$ For $W,V,$ manifolds with corners, a continuous map $f: W \to V$ is called {\bf piecewise smooth} if we can find a smooth triangulation of $W$ such that $f$ restricted to any simplex is smooth.
The main result of this section is the following theorem.

\begin{tm}\label{tm:vg}
There exists $\epsilon_0 > 0$ such that for each $\epsilon \in (0,\epsilon_0]$ there exists a $\delta_0 = \delta_0(\epsilon) > 0$ with the following significance.
Suppose
\[
\gamma : [0,1] \to M
\]
with $\gamma(i) \in L_i$ for $i = 0,1,$ and either $\ell_g(\gamma) < \delta_0$ or there exists a point
$
p \in L_0\cap L_1
$
with $\gamma([0,1]) \subset B_{\epsilon/9}(p).$
\begin{enumerate}
\item\label{it:vg}
There exists a piecewise smooth $v_\gamma : H\to B_\epsilon(\gamma(0))$ with
\begin{gather*}
v_\gamma([-1,0]) \subset L_1, \qquad v_\gamma([0,1]) \subset L_0,\\
v_\gamma(e^{\sqrt{-1}\pi t}) = \gamma(t).
\end{gather*}
\item\label{it:inv}
$\int_H v_\gamma^*\omega$ depends only on $\gamma$ and not on the choice of $v_\gamma.$
\end{enumerate}
\end{tm}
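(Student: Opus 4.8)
The plan is to build $v_\gamma$ by first understanding the local geometry of $L_0$ and $L_1$ near a point of their intersection, or near a point of one of them in case $\gamma$ is short but does not pass near $L_0\cap L_1$, and then to fill in the half-disk $H$ by a two-stage homotopy: first a path from $\gamma(0)$ to $\gamma(1)$ carried inside the Lagrangians together with $\gamma$ itself, then a disk capping the resulting loop. The subtle point throughout is that $L_0\cap L_1$ may be a very singular real-analytic set, so the constructions cannot rely on clean intersection or on the existence of local product coordinates.

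First I would use Whitney regularity of real-analytic sets (the Whitney/Hironaka/{\L}ojasiewicz stratification results cited as \cite{_Bi_,_Ha_}) to produce, near a fixed $p \in L_0\cap L_1$, a uniform bound: any two points of $L_0$ (resp.\ $L_1$) within distance $r$ of $p$ can be joined by a path inside $L_0$ (resp.\ $L_1$) of length at most $C r$, with $C$ depending only on the germ of $L_0$ at $p$ and $p$ ranging over a compact set. This is where the real-analyticity enters essentially. Granting this, given $\gamma$ with $\gamma([0,1]) \subset B_{\epsilon/9}(p)$, I connect $\gamma(0)$ to $p$ inside $L_0$ by a short path $\alpha_0$ and $\gamma(1)$ to $p$ inside $L_1$ by a short path $\alpha_1$; concatenating $\gamma$, $\alpha_1$ and $\bar\alpha_0$ gives a loop based at $\gamma(0)$, contained in $B_{\epsilon/3}(\gamma(0))$ say, which bounds because for $\epsilon$ small enough $B_\epsilon(\gamma(0))$ is geodesically convex (here completeness of $g$ and compactness of $N$, or the distance bound off $N$, guarantee a uniform such $\epsilon_0$). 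Capping it with, e.g., the geodesic cone to $\gamma(0)$ and reparametrizing produces the piecewise smooth $v_\gamma : H \to B_\epsilon(\gamma(0))$ with the required boundary conditions. The case where $\ell_g(\gamma) < \delta_0$ is handled similarly: shortness of $\gamma$ forces $\gamma(0)$ and $\gamma(1)$ to be close, and off the compact set $N$ the distance bound between $L_0$ and $L_1$ forces $\gamma$ to meet $N$, so one may take $p$ to be a point of $L_0\cap L_1$ near $N$ after shrinking $\delta_0$ appropriately; then one is reduced to the first case. I expect this reduction, making the choice of $\delta_0(\epsilon)$ uniform, to be slightly fiddly but not deep.

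For part~\eqref{it:inv}, I would show that any two fillings $v_\gamma, v'_\gamma$ with the stated properties give the same value of $\int_H v^*\omega$. Glue $v_\gamma$ and $v'_\gamma$ along $\partial H$ to get a piecewise smooth map $w$ from the sphere $S^2$ into the ball $B_{2\epsilon}(\gamma(0))$; then $\int_H v_\gamma^*\omega - \int_H (v'_\gamma)^*\omega = \int_{S^2} w^*\omega$. Since $\omega$ is closed and $B_{2\epsilon}(\gamma(0))$ is contractible (again using geodesic convexity for $\epsilon \leq \epsilon_0$), $\omega$ is exact there, $\omega = d\lambda$ on the ball, and Stokes's theorem on a piecewise-smooth triangulation of $S^2$ gives $\int_{S^2}w^*\omega = \int_{S^2} d(w^*\lambda) = 0$. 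The only care needed is that $w$ is merely piecewise smooth, so Stokes must be applied simplex-by-simplex with the boundary contributions cancelling in pairs; this is routine given the piecewise-smooth triangulation in the definition.

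The main obstacle is the first step: establishing the uniform linear path-connectivity bound inside a possibly singular real-analytic Lagrangian, with constants uniform over the relevant compact set. Everything else is soft once that local control is in hand, because it feeds directly into keeping $v_\gamma$ inside the prescribed ball $B_\epsilon(\gamma(0))$, which is the one quantitative requirement of the statement. I would isolate this as a lemma phrased purely in terms of Whitney-stratified analytic sets and cite \cite{_Bi_,_Ha_,_Loj65_} for the underlying regularity.
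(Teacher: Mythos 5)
Part~(a) of your proposal is essentially the paper's construction: connect $\gamma(0)$ to $p$ in $L_0$ and $\gamma(1)$ to $p$ in $L_1$ by short paths and cap the resulting triangle by an exponential-map cone. One correction there: Whitney regularity is not what you need for those connecting paths, since $L_0$ and $L_1$ are \emph{smooth} submanifolds --- a lower bound on their injectivity radii over the compact set $N$, plus a comparison of intrinsic and extrinsic balls, already gives paths of controlled length. The singular object is $L_0\cap L_1$, and that is not touched in part~(a).

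Part~(b) has a genuine gap, and it is exactly the point where the real analyticity must be used. Two admissible fillings $v_\gamma$ and $v'_\gamma$ agree on the semicircle of $\partial H$ (both restrict to $\gamma$ there) but \emph{not} on the diameter $[-1,1]$: the arcs $v_\gamma|_{[0,1]}$ and $v'_\gamma|_{[0,1]}$ are different paths in $L_0$, and the points $v_\gamma(0), v'_\gamma(0)\in L_0\cap L_1$ may differ. So gluing along $\partial H$ does not produce a map of $S^2$, and the difference of the two integrals is not the integral of $\omega$ over a sphere. Writing $\omega=d\lambda$ on the ball, the difference equals $\int\lambda$ over a loop made of an arc in $L_0$ from $v_\gamma(0)$ to $v'_\gamma(0)$ (through $\gamma(0)$) and an arc in $L_1$ back (through $\gamma(1)$); since $\lambda$ does not vanish on the Lagrangians, this does not vanish for free. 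The paper's argument joins $v_\gamma(0)$ to $v'_\gamma(0)$ by a path $\alpha$ \emph{inside the possibly singular set} $L_0\cap L_1$ of length controlled by Whitney regularity (this is where \cite{_Bi_,_Ha_} enter, and why $\epsilon_0$ is chosen smaller than the gap between components of $L_0\cap L_1$), then caps the $L_0$-arc against $\alpha$ by a disk in $L_0$ and the $L_1$-arc against $\alpha$ by a disk in $L_1$; each cap contributes zero because $\omega$ restricts to zero on a Lagrangian. Without producing such an $\alpha$ of controlled length --- which is the hard analytic input --- the well-definedness of the action is not established, so this step cannot be dismissed as routine.
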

Thus, we define the {\bf local symplectic action} of $\gamma$ to be
\[
a(\gamma) = \int_H v_\gamma^*\omega,
\]
where $v_\gamma$ is as in~\ref{it:vg}.

For the proof of Theorem~\ref{tm:vg}, we use the following theorem on the Whitney regularity of real analytic sets~\cite{_Bi_,_Ha_}.
\begin{tm}\label{tm:wr}
Let $X$ be a compact connected analytic subset of $\R^n.$ Then there is a positive integer $\nu_X$ and a constant $C_X$ such that any two points $x,y \in X$ can be joined by a semi-analytic curve $\gamma$ in $X$ of length
\[
\ell(\gamma) \leq C_X|x-y|^{\frac{1}{\nu_X}}.
\]
\end{tm}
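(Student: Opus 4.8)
The statement is a classical theorem, due essentially to \L ojasiewicz, on the Whitney regularity of (semi-)analytic sets, and the plan is to recover its proof. It is convenient to prove the formally stronger statement with ``analytic'' replaced by ``subanalytic'', since the induction below produces subanalytic sets. Two preliminary reductions. Since $\operatorname{diam}(X)<\infty$, it suffices to produce, for all $x,y\in X$, a semi-analytic curve in $X$ from $x$ to $y$ of length $\leq C|x-y|^{1/\mu}$ with \emph{any} real $\mu\geq 1$: rounding $\mu$ up to the next integer $\nu_X$ only enlarges the constant. And since subanalytic sets are locally conical, a compact connected $X$ is covered by finitely many overlapping charts in each of which every point reaches the cone point along a path of uniformly bounded (not yet H\"older-controlled) length; hence any two points of $X$ are joined by \emph{some} semi-analytic curve of bounded length, and it remains only to treat $|x-y|$ small.

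I would assemble the proof from two ingredients. First: by \L ojasiewicz's theory one may fix a finite stratification $X=\bigsqcup_\alpha S_\alpha$ into smooth strata satisfying Verdier's condition $(w)$, and, applying \L ojasiewicz's regular separation inequality to the finitely many pairs of closures $\overline{S_\alpha},\overline{S_\beta}$ with $\overline{S_\alpha}\cap\overline{S_\beta}\neq\emptyset$, one obtains $\mu\geq 1$ and $c>0$ with $\dist(z,\overline{S_\alpha})+\dist(z,\overline{S_\beta})\geq c\,\dist(z,\overline{S_\alpha}\cap\overline{S_\beta})^{\mu}$ near $\overline{S_\alpha}\cap\overline{S_\beta}$. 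Second, the key \emph{accessibility estimate}: there are $C_1,\nu_1,\rho_1>0$ such that for every $p\in X$ and every $x\in X$ with $|x-p|<\rho_1$ there is a semi-analytic curve in $X$ from $x$ to $p$ of length $\leq C_1|x-p|^{1/\nu_1}$. Granting both, the small-scale case follows by a scale analysis. Writing $r=|x-y|$: either $x$ and $y$ lie in one stratum $S_\alpha$ at distance $\geq Kr$ from its frontier $\overline{S_\alpha}\setminus S_\alpha$, for a suitably large $K$, in which case condition $(w)$ forces $S_\alpha$ to be, at scales below a fixed fraction of the distance to its frontier, a uniformly Lipschitz graph, hence uniformly bi-Lipschitz to a Euclidean ball at scale $r$, so $x,y$ are joined inside $S_\alpha$ by a curve of length $\leq C' r$; or not, and then either $x$ (say) lies within $Kr$ of $\overline{S_\alpha}\setminus S_\alpha$, or $x\in S_\alpha$, $y\in S_\beta$ with $\alpha\neq\beta$ and regular separation forces $\dist(x,\overline{S_\alpha}\cap\overline{S_\beta})\leq(r/c)^{1/\mu}$ --- in either case there is a point $p$ of a strictly lower-dimensional stratum with $|x-p|,|y-p|\leq C'' r^{1/\mu}$, and we join $x$ to $p$ and $y$ to $p$ by the accessibility estimate (lengths $\leq C''' r^{1/(\mu\nu_1)}$). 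Thus $\mu\nu_1$, rounded up, serves as $\nu_X$.

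The main obstacle is the accessibility estimate, and this is exactly where real analyticity and \L ojasiewicz's inequalities become indispensable --- in the same spirit as their use elsewhere in this paper. I would prove it by induction on $\dim X$, the case $\dim X=0$ being vacuous. If both $x$ and $p$ lie in $\operatorname{Sing}(X)$ (a subanalytic set of dimension $<\dim X$, whose compact components are at positive distance from one another), apply the inductive hypothesis to $\operatorname{Sing}(X)$ directly. Otherwise at least one endpoint, say $x$, lies in $X_{\mathrm{reg}}$; then flow $x$ toward $p$ along the negative gradient of $f=\tfrac12|\cdot-p|^2$ restricted to $X_{\mathrm{reg}}$ near $p$. \L ojasiewicz's gradient inequality gives $|\nabla_X f|\geq c_0 f^{\beta}$ for some $\beta\in(0,1)$ on a punctured neighborhood of $p$; the standard \L ojasiewicz length estimate --- the mechanism pioneered by Simon \cite{_Si83_} and recalled in the introduction --- then bounds the length of this trajectory by $C_1 f(x)^{1-\beta}\asymp|x-p|^{2(1-\beta)}$; and \L ojasiewicz's local conical structure theorem shows, for $\rho_1$ small, that the trajectory actually limits to $p$ and not to some other critical point of $f|_{X_{\mathrm{reg}}}$. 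So $\nu_1=1/(2(1-\beta))$ works. To be safe about semi-analyticity of the curve, one may instead follow the corresponding cone line of a semi-analytic conical trivialization near $p$.

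Finally, two alternative routes. One can derive the whole theorem from Hironaka's uniformization theorem for subanalytic sets: write $X$ as a finite union of images $g_i(Q_i)$ of cubes under real-analytic maps, connect lifts of $x$ and $y$ by straight segments in the cubes, and push forward; this needs, in addition, the \emph{uniform} \L ojasiewicz inequality for subanalytic families to control how the fibres $g_i^{-1}(\cdot)$ vary, and still a dimension induction to bridge between different cubes. Equivalently one can package the same content via the pancake (``normally embedded'') decomposition of Kurdyka and Parusi\'nski together with the regular separation of the pancakes. In every approach, what forces $\nu_X>1$ in general --- already $\nu_X=3/2$ for the cusp $y^2=x^3$ --- is a \L ojasiewicz-type inequality, and producing a single exponent valid uniformly over $X$ is the technical heart.
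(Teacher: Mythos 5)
You should first be aware that the paper does not prove Theorem~\ref{tm:wr} at all: it quotes it as a known regularity theorem for analytic sets, with references to Bierstone~\cite{_Bi_} and Hardt~\cite{_Ha_} and a concise exposition in \cite[Theorem 6.1]{_BM88_}, and it records that the proof there goes through local resolution of singularities (uniformization) together with the \L ojasiewicz inequality --- i.e., exactly the route you relegate to your first ``alternative.'' So the comparison to be made is not with an in-paper argument but with whether your primary route (Verdier $(w)$-stratification, regular separation, and a gradient-flow accessibility estimate) actually closes, and as written it does not.

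The gaps are concentrated where you yourself locate the technical heart. (1) The accessibility estimate needs $C_1,\nu_1,\rho_1$ uniform over $p\in X$, but the gradient inequality (Theorem~\ref{tm:gi}) is pointwise in $p$ with constants and exponent depending on $p$; you never invoke the parametrized/uniform \L ojasiewicz inequality, which is a genuinely additional ingredient, and in any case applying a gradient inequality to $f|_{X_{\mathrm{reg}}}$ near a \emph{singular} point $p\notin X_{\mathrm{reg}}$ is not covered by the classical statement and needs a subanalytic version. (2) The negative gradient trajectory of $f|_{X_{\mathrm{reg}}}$ can reach or accumulate on $\operatorname{Sing}(X)$ at a point $q\neq p$; the local conic structure theorem is purely topological and says nothing about limits of gradient trajectories, so ``the trajectory limits to $p$'' is unjustified, and your dimension induction only treats the case where \emph{both} endpoints are singular, so you cannot simply restart from $q$. (3) Gradient trajectories of analytic functions are not semi-analytic, and your proposed substitute --- following cone lines of a (sub)analytic conical trivialization --- is circular: bounding the length of those cone lines by a power of $|x-p|$ is essentially the theorem being proved. (4) In the case of two points in one stratum, Verdier's condition $(w)$ constrains how a stratum approaches its frontier but does not make the stratum uniformly bi-Lipschitz to a Euclidean ball at scales comparable to $r$; the property you are using is normal embeddedness of the pieces, which is what the Kurdyka--Parusi\'nski pancake decomposition provides and is not a consequence of $(w)$. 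To repair the write-up, either carry out the uniformization/pancake route in earnest, as in \cite[Theorem 6.1]{_BM88_}, or supply the uniform \L ojasiewicz inequality and the analysis of trajectories near $\operatorname{Sing}(X)$ that your sketch presupposes.
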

The proof of Theorem~\ref{tm:wr} uses the \L ojasiewicz inequality and local resolution of singularities.
A concise exposition can be found in~\cite[Theorem 6.1]{_BM88_}. Using a covering argument, we obtain the following corollary.
\begin{cy}\label{cy:cs}
There is a positive integer $\nu$ and a constant $C$ such that any two points $p,q,$ in the same connected component of $L_0\cap L_1$ can be joined by a piecewise smooth curve $\alpha: [0,1] \to L_0\cap L_1 \subset M$ with
\[
\ell_g(\alpha) \leq C\dist_g(p,q)^{\frac{1}{\nu}}.
\]
\end{cy}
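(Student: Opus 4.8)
The plan is to deduce Corollary~\ref{cy:cs} from Theorem~\ref{tm:wr} applied chart-by-chart (or after a global embedding), the only real work being to check that $X := L_0\cap L_1$ is a \emph{compact} analytic set and to convert the Euclidean estimate of Theorem~\ref{tm:wr} into an estimate for $g$.

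First I would check that $X$ is a compact real analytic subset of $M$. Since $L_0,L_1$ are real analytic submanifolds, near any point $X$ is the common zero locus of finitely many real analytic functions, so $X$ is a real analytic subset of $M$, and it is closed. If some $p\in X$ lay outside $N$, then $p$ would lie in both $L_0\setminus N$ and $L_1\setminus N$, forcing $\dist_g(L_0\setminus N,L_1\setminus N)=0$, contrary to hypothesis; hence $X\subset N$ is compact. Being compact and analytic, $X$ is triangulable~\cite{_Loj64_}, so it has finitely many connected components $X_1,\dots,X_m$; each $X_j$ is open and closed in $X$ and therefore is itself a compact connected real analytic subset of $M$.

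Next I would pass to Euclidean space. By the Grauert--Morrey theorem there is a proper real analytic embedding $\iota:M\hookrightarrow\R^N$; since $\iota$ is proper and the $X_j$ are closed analytic in $M$, each $\iota(X_j)$ is a compact connected analytic subset of $\R^N$. Theorem~\ref{tm:wr} then supplies integers $\nu_j$ and constants $C_j$ so that any two points of $X_j$ are joined by a semi-analytic --- in particular piecewise real analytic, hence piecewise smooth --- curve in $X_j$ whose Euclidean length is at most $C_j|\iota(x)-\iota(y)|^{1/\nu_j}$. (One can equally run the promised covering argument directly: cover the compact set $X$ by finitely many real analytic charts and apply the local form of Theorem~\ref{tm:wr} in each, then patch; the embedding merely makes the hypotheses of Theorem~\ref{tm:wr} literally available.) To transfer this to $g$, set $g_0=\iota^*g_{\mathrm{eucl}}$: on the compact set $X$ the norms $|\cdot|_g$ and $|\cdot|_{g_0}$ are uniformly equivalent, so curves in $X$ have $g$-length at most a fixed multiple $\Lambda$ of the Euclidean length of their $\iota$-image; and since $(M,g)$ is complete, the $g$-minimizing geodesics between points of $X$ all lie in the fixed compact set $\overline{B^g_D(x_0)}$, $D=\operatorname{diam}_g X$, so comparing $g_0$ with $g$ there gives $\Lambda'$ with $|\iota(x)-\iota(y)|\le\Lambda'\dist_g(x,y)$ for all $x,y\in X$.

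Putting $\nu=\max_j\nu_j$ and using $|\iota(p)-\iota(q)|\le\operatorname{diam}\iota(X)$ to replace each exponent $1/\nu_j$ by $1/\nu$, the two comparisons combine to give, for $p,q$ in a common component, a piecewise smooth curve in $X$ joining them with $\ell_g\le C_1\dist_g(p,q)^{1/\nu}$; if $\dist_g(p,q)$ is bounded below --- the ``far apart'' case --- then $\dist_g(p,q)^{1/\nu}$ is bounded below while the curve already has bounded $g$-length, so the bound holds after enlarging the constant, and taking $C$ to be the maximum of the constants obtained finishes the proof. The only substantive point, and the main obstacle, is this last transfer from $\R^n$ to $M$: one has to make the chart/embedding step and, above all, the comparison $|\iota(x)-\iota(y)|\lesssim\dist_g(x,y)$ uniform over all of $X$ --- which is precisely where completeness of $g$ enters --- while everything else is bookkeeping with constants.
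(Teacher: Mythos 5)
Your proposal is correct and follows the route the paper itself indicates: the paper derives Corollary~\ref{cy:cs} from Theorem~\ref{tm:wr} ``using a covering argument'' without further detail, and your write-up supplies exactly the missing bookkeeping (compactness of $L_0\cap L_1$ from the hypothesis on $N$, finiteness of components, reduction to the Euclidean statement, and the two-sided comparison of $g$ with a Euclidean metric on a compact set). The global Grauert--Morrey embedding you offer is a clean alternative to patching local charts, but it is not a substantively different argument.
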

We begin by proving the following lemma.
\begin{lm}\label{lm:tri}
Let $(X,h)$ be a complete Riemannian manifold, and let $Y \subset X$ be a subset such that injectivity radius of $X$ at all points of $Y$ is bounded below by $\delta.$ Suppose three paths $\chi,\beta_0,\beta_1 : [0,1] \to Y$ form a triangle,
\[
\chi(0) = \beta_0(1), \qquad \chi(1) = \beta_1(1), \qquad \beta_0(0) = \beta_1(0),
\]
with $\chi$ piecewise smooth and $\beta_0,\beta_1,$ smooth.
Moreover, suppose
\[
\chi([0,1]), \beta_0([0,1]), \beta_1([0,1]) \subset B_{\delta/3}(\chi(0)).
\]
Then there exists a piecewise smooth map $\zeta: H \to B_\delta(\chi(0))\subset X$ such that for $x\in [0,1]$ we have
\begin{gather*}
\zeta(x) = \beta_0(x), \qquad \zeta(-x) = \beta_1(x), \\
\zeta(e^{\sqrt{-1}\pi x}) = \chi(x).
\end{gather*}
\end{lm}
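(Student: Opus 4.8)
The plan is to linearise the problem with the exponential map at $p:=\chi(0)$ and then reduce to extending boundary data into a convex ball, which I would do by coning from the centre of $H$ with a cutoff that forces smoothness at the cone point. Since $p\in Y$, the injectivity radius of $X$ at $p$ is at least $\delta$, so by completeness of $(X,h)$ the map $\exp_p$ restricts to a diffeomorphism from $\{v\in T_pX:|v|_h<\delta\}$ onto $B_\delta(p)$ with $d_h(p,\exp_p v)=|v|_h$. Assemble the prescribed boundary values into a single continuous map $b:\partial H\to X$, equal to $\beta_0$ on $[0,1]$, to $x\mapsto\beta_1(-x)$ on $[-1,0]$, and to $t\mapsto\chi(t)$ on the upper semicircle $\{e^{\sqrt{-1}\pi t}:t\in[0,1]\}$; the three matching conditions in the hypothesis are exactly what is needed for $b$ to be well defined and continuous at the shared points $1,0,-1$, and by hypothesis $b$ takes values in $B_{\delta/3}(p)$. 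Thus $f:=\exp_p^{-1}\circ b$ is a continuous map from $\partial H$ into the convex set $U:=\{v\in T_pX:|v|_h<\delta/3\}$, and $f$ is smooth away from the finite set $S\subset\partial H$ consisting of the two corners $\pm1$ of $H$, the point $0$, and the breakpoints $e^{\sqrt{-1}\pi t_j}$ of $\chi$. It now suffices to produce a piecewise smooth $g:H\to U$ with $g|_{\partial H}=f$, for then $\zeta:=\exp_p\circ g$ maps $H$ into $B_{\delta/3}(p)\subset B_\delta(p)$, restricts to $b$ on $\partial H$, and hence satisfies all the required conditions.

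To build $g$, I would use that $H$ is convex, being the intersection of the closed unit disk and the closed upper half-plane, so from the interior point $c:=\sqrt{-1}/2$ every $z\in H\setminus\{c\}$ lies on a unique segment $[c,w(z)]$ with $w(z)\in\partial H$, at parameter $r(z):=|z-c|/|w(z)-c|\in(0,1]$, with $r(z)=1$ exactly for $z\in\partial H$; moreover $r$ and $w$ are smooth on $H$ minus $c$ and the radial segments from $c$ to the points of $S$, using that a segment from $c$ to a point of the unit circle meets it transversally (as $|c|<1$) and a segment from $c$ to a point of $(-1,1)$ meets the real axis transversally (as $c\notin\R$). Fix a smooth $\rho:[0,1]\to[0,1]$ with $\rho\equiv0$ on $[0,1/3]$ and $\rho\equiv1$ on $[2/3,1]$, and set $g(z):=\rho(r(z))\,f(w(z))$ for $z\neq c$ and $g(c):=0$. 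As $\rho(r)f(w)$ is a convex combination of $0$ and $f(w)\in U$, the map $g$ lands in $U$; on $\partial H$ one has $w(z)=z$ and $r(z)=1$, so $g=f$ there; and $g\equiv0$, hence $\zeta\equiv p$, on the neighbourhood $\{r\le1/3\}$ of $c$.

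Finally, to check that $\zeta$ is piecewise smooth, I would choose a smooth triangulation of the manifold-with-corners $H$ refining the finite cell decomposition cut out by the curves $\{r=1/3\}$, $\{r=2/3\}$ and the radial segments from $c$ to the points of $S$, so that $c$ and all points of $S$ are vertices. On simplices contained in $\{r\le1/3\}$, $\zeta$ is the constant $p$. Every remaining simplex stays a positive distance from $c$ and meets the radial segments to $S$ only along its faces, so there $r$ and $w$ are restrictions of smooth maps, $w$ takes values in a single smooth arc of $\partial H$ on which $f$ is smooth, and hence $\zeta=\exp_p(\rho(r)\,f(w))$ is smooth up to the simplex boundary. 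The one genuinely delicate point — and the reason for the cutoff $\rho$ — is that the naive geodesic cone $z\mapsto\exp_p(r(z)f(w(z)))$ is in general only Lipschitz, not $C^1$, at the apex $c$, its angular derivatives there depending on the direction of approach unless $f$ is affine on $\partial H$; flattening $\zeta$ to the constant $p$ near $c$ eliminates this at the harmless cost of triangulating along $\{r=1/3\}$ and $\{r=2/3\}$, and the remaining work is routine bookkeeping to make the triangulation compatible with the breakpoints of $\chi$.
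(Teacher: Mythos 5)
Your proof is correct, but it takes a genuinely different route from the paper's. The paper builds $\zeta$ as a geodesic sweep inside $X$ itself: it matches each point $\beta_i(x)$ of the base paths with a point $\chi(t)$ of the top path via stereographic projection of $H$ from $-\sqrt{-1}$, sets $\xi_i(x)=\exp_{\beta_i(x)}^{-1}(\chi(t))$, and defines $\zeta$ by flowing the prescribed fraction of the way along these geodesics; this uses the injectivity radius bound at every point of $\beta_0\cup\beta_1$ and lands in $B_\delta(\chi(0))$ by a triangle inequality. You instead use the injectivity radius only at the single point $p=\chi(0)$ to linearize: pull the boundary data back to the convex ball $U\subset T_pX$ by $\exp_p^{-1}$, solve the now purely topological extension problem by coning from an interior point of $H$ with a radial cutoff, and push forward. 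Your reduction is more modular and your treatment of regularity is arguably more careful than the paper's --- you isolate and defuse the one genuine regularity issue (the cone apex) with the cutoff $\rho$, whereas the paper's construction has its own seams (the switch between the $\beta_0$- and $\beta_1$-formulas along the imaginary axis, and the breakpoints of $\chi$) that it leaves to the reader under the umbrella of piecewise smoothness. The only costs of your approach are cosmetic: your $\zeta$ lands in the smaller ball $B_{\delta/3}(\chi(0))$ and is constant near the cone point, neither of which affects any of the lemma's applications (in particular, Corollary 4.7 only needs $\zeta(0)=\beta_0(0)$ at the boundary point $0\in\partial H$, not at your apex $c$). Both proofs are complete and correct.
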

\begin{proof}
For $z = x + \sqrt{-1} y \in H,$ write
\begin{gather*}
u(z) = \frac{2 x (y+1)}{x^2 + (y+1)^2}, \qquad v(z) = \frac{(y+1)^2 - x^2}{x^2 +(y+1)^2}, \\
w(z) = u(z) + \sqrt{-1}v(z).
\end{gather*}
So, $w(z)$ is the stereographic projection of $z$ from $-\sqrt{-1}$ to the unit circle.
Write
\[
t(z) = \frac{1}{\pi}\arg{w(z)}.
\]
For $i = 0,1,$ and $x \in [0,1],$ define $\xi_i(x) \in T_{\beta_i(x)}M$ by
\[
|\xi_i(x)| <\frac{2\delta}{3}, \qquad\exp_{\beta_i(x)}{\xi_i(x)} =
\begin{cases}
\chi(t(x)), & i = 0, \\
\chi(t(-x)), & i = 1.
\end{cases}
\]
Such $\xi_i$ exist because by the triangle inequality, $\dist_g(\beta_i(s),\chi(t)) < \frac{2\delta}{3}$ for all $s,t.$ Write
\[
\varpi(z) = \frac{x}{y+1},
\]
for the stereographic projection of $z$ from $-\sqrt{-1}$ to $\R.$ Take
\[
\zeta(z) =
\begin{cases}
\exp_{\beta_0(\varpi(z))}\left(\frac{y}{v(z)}\xi_0(\varpi(z))\right), & x \geq 0,\\
\exp_{\beta_1(-\varpi(z))}\left(\frac{y}{v(z)}\xi_1(-\varpi(z))\right), &  x < 0.
\end{cases}
\]
Finally, for $i = 0$ or $1$ depending on whether $x\geq 0$ or $x < 0$, we have
\begin{align*}
\dist_g(\zeta(z),\chi(0))
&\leq \dist_g(\zeta(z),\zeta(w(z))) + \dist_g(\zeta(w(z)),\chi(0)) \\
&\leq |\xi_i((-1)^i\varpi(z))| + \dist_g(\chi(t(z)),\chi(0))  \\
&< \frac{2\delta}{3} + \frac{\delta}{3} \\
&= \delta.
\end{align*}
\end{proof}

For $r > 0,$ let $U_r$ denote the open $r$ neighborhood of $L_0 \cap L_1$ with respect to $g.$ If necessary replacing $N$ with $N \cup \overline{U_1},$ we may assume that $U_1 \subset N.$ Let $g_{L_i}$ denote the induced metric on $L_i$ for $i = 0,1.$ Let $\epsilon_N$ denote the minimum injectivity radius of $(M,g)$ that occurs at a point $q \in N$ and let $\epsilon_{L_i}$ denote the minimum injectivity radius of $(L_i,g_{L_i})$ that occurs at a point $q \in N \cap L_i.$ Choose
\begin{equation}\label{eq:e0}
\epsilon_1 \leq \min\left(\epsilon_N,\epsilon_{L_0},\epsilon_{L_1},\frac{1}{2}\right).
\end{equation}

Let $B_r(q,L_i)\subset L_i$ denote the ball of radius $r$ centered at $q$ with respect to the metric $g_{L_i}.$ After possibly making $\epsilon_1$ smaller, we may assume that for every $q\in N\cap L_i$ we have
\begin{equation}\label{eq:LM}
B_{r/2}(q) \cap L_i \subset B_{r}(q,L_i) \qquad \forall r < \epsilon_1.
\end{equation}

\begin{cy}\label{cy:trip}
Let $\epsilon \leq \epsilon_1.$ Let $\gamma:[0,1] \to M$ with $\gamma(i) \in L_i$ for $i =0,1.$ Suppose $p \in L_0 \cap L_1$ such that
\[
\gamma([0,1]) \subset B_{\epsilon/9}(p).
\]
Then there exists $v_\gamma$ as in Theorem~\ref{tm:vg}\ref{it:vg} with $v_\gamma(0) = p.$
\end{cy}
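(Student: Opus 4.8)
The plan is to deduce Corollary~\ref{cy:trip} from Lemma~\ref{lm:tri} by realizing $v_\gamma$ as the filling of a geodesic ``triangle'' one of whose vertices is the prescribed point $p.$ The two new sides will be a short minimizing geodesic $\beta_0$ from $p$ to $\gamma(0)$ lying in $L_0$ and a short minimizing geodesic $\beta_1$ from $p$ to $\gamma(1)$ lying in $L_1;$ the third side is $\chi := \gamma$ itself. Applying Lemma~\ref{lm:tri} to this triangle produces a piecewise smooth $\zeta : H \to M$ whose restrictions to $[0,1]$ and $[-1,0]$ are $\beta_0$ and (a reparametrization of) $\beta_1,$ and whose restriction to the arc is $\gamma;$ one then sets $v_\gamma = \zeta,$ which in particular has $v_\gamma(0) = \beta_0(0) = p.$

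To build $\beta_0,$ note that $\gamma(0) \in B_{\epsilon/9}(p) \cap L_0$ by hypothesis, and $2\epsilon/9 < \epsilon \leq \epsilon_1,$ so inclusion~\eqref{eq:LM} applied at $q = p \in N\cap L_0$ with $r = 2\epsilon/9$ gives $\dist_{g_{L_0}}(p,\gamma(0)) < 2\epsilon/9.$ Since $p \in N\cap L_0,$ the injectivity radius of $(L_0,g_{L_0})$ at $p$ exceeds $2\epsilon/9$ by~\eqref{eq:e0}, so there is a minimizing geodesic $\beta_0 : [0,1]\to L_0$ from $p$ to $\gamma(0)$ of length $<2\epsilon/9;$ it is smooth, and $\beta_0([0,1]) \subset B_{2\epsilon/9}(p)$ because $\dist_g \leq \dist_{g_{L_0}}$ along $L_0.$ The geodesic $\beta_1 : [0,1] \to L_1$ from $p$ to $\gamma(1)$ with $\ell_g(\beta_1)<2\epsilon/9$ is obtained identically, using $\epsilon_{L_1}$ in place of $\epsilon_{L_0}.$

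Next I would verify the hypotheses of Lemma~\ref{lm:tri} with $X = M,$ $h = g,$ $\delta = \epsilon$ and $Y = B_{2\epsilon/9}(p).$ Since $p \in L_0 \cap L_1$ and $2\epsilon/9 < 1,$ we have $Y \subset U_1 \subset N,$ hence the injectivity radius of $M$ on $Y$ is at least $\epsilon_N \geq \epsilon_1 \geq \epsilon = \delta.$ The paths $\chi = \gamma,$ $\beta_0,$ $\beta_1$ all map into $Y$ and form a triangle: $\chi(0) = \gamma(0) = \beta_0(1),$ $\chi(1) = \gamma(1) = \beta_1(1),$ $\beta_0(0) = p = \beta_1(0),$ with $\chi$ piecewise smooth (this mild regularity of $\gamma$ is implicit in Theorem~\ref{tm:vg}) and $\beta_0,\beta_1$ smooth. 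Finally, since $\dist_g(\gamma(0),p)<\epsilon/9,$ all three images lie in $B_{\epsilon/3}(\gamma(0)) = B_{\delta/3}(\chi(0)),$ because $\gamma([0,1])\subset B_{\epsilon/9}(p)\subset B_{2\epsilon/9}(\gamma(0))$ and $\beta_i([0,1])\subset B_{2\epsilon/9}(p)\subset B_{\epsilon/3}(\gamma(0)).$ Lemma~\ref{lm:tri} then delivers a piecewise smooth $\zeta : H \to B_\epsilon(\gamma(0)) \subset M$ with $\zeta(x) = \beta_0(x)$ and $\zeta(-x) = \beta_1(x)$ for $x \in [0,1],$ and $\zeta(e^{\sqrt{-1}\pi x}) = \gamma(x).$ Setting $v_\gamma = \zeta$ yields $v_\gamma([0,1]) \subset L_0,$ $v_\gamma([-1,0])\subset L_1,$ $v_\gamma(e^{\sqrt{-1}\pi t}) = \gamma(t)$ and $v_\gamma(0) = p,$ which is exactly what is asserted.

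I expect the only genuinely delicate point to be converting the ambient-ball containment $\gamma(i) \in B_{\epsilon/9}(p)$ into the intrinsic-ball containment $\gamma(i) \in B_{2\epsilon/9}(p,L_i)$ needed to keep the connecting geodesics inside $L_0$ and $L_1;$ this is precisely the role of~\eqref{eq:LM} together with the injectivity-radius bookkeeping in~\eqref{eq:e0}, so once those are invoked the corollary follows directly from Lemma~\ref{lm:tri}.
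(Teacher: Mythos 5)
Your argument is correct and matches the paper's proof: the paper likewise uses inclusion~\eqref{eq:LM} with $r = 2\epsilon/9$ together with the injectivity radius bound~\eqref{eq:e0} to produce paths $\beta_i \subset L_i$ from $p$ to $\gamma(i)$ inside $B_{2\epsilon/9}(p)$, observes that everything lies in $B_{\epsilon/3}(\gamma(0))$, and applies Lemma~\ref{lm:tri} with $\chi = \gamma$ and $\delta = \epsilon.$ Your version merely makes explicit some choices the paper leaves implicit (taking the $\beta_i$ to be minimizing geodesics, and identifying $Y$ and the injectivity-radius verification), so there is nothing to add.
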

\begin{proof}
For $i = 0,1,$ let $\beta_i : [0,1] \to L_i$ be a path with $\beta_i(0) = p$ and $\beta_i(1) = \gamma(i)$ and $\beta_i([0,1]) \subset B_{\frac{2\epsilon}{9}}(p).$ Such $\beta_i$ exist because by~\eqref{eq:LM} with $r = \frac{2\epsilon}{9},$ we have $\gamma(i) \subset B_{\frac{2\epsilon}{9}}(p,L_i),$ and $\frac{2\epsilon}{9}$ is less than the injectivity radius of $L_i.$ It follows that $\beta_i([0,1]) \subset B_{\frac{\epsilon}{3}}(\gamma(0)).$ Apply Lemma~\ref{lm:tri} with $\chi = \gamma.$
\end{proof}
\begin{lm}\label{lm:gbp}
For $r > 0$ there exists $\delta(r) > 0$ with the following significance.
If $\gamma :[0,1] \to M$ with $\gamma(i) \in L_i$ for $i = 0,1,$ and $\ell_g(\gamma)< \delta(r),$ then there exists $p \in L_0 \cap L_1$ such that
\begin{equation}\label{eq:gbp}
\gamma([0,1]) \subset B_{r}(p).
\end{equation}
\end{lm}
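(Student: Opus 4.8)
The plan is to argue by contradiction. Suppose no such $\delta(r)$ exists. Then for every $n$ there is a path $\gamma_n:[0,1]\to M$ with $\gamma_n(i)\in L_i$ for $i=0,1$, with $\ell_g(\gamma_n)<1/n$, yet $\gamma_n([0,1])$ is not contained in any ball $B_r(p)$ centered at a point $p\in L_0\cap L_1$. Since $\ell_g(\gamma_n)\to 0$, the endpoints $\gamma_n(0)$ and $\gamma_n(1)$ get arbitrarily close, so $\dist_g(L_0\setminus N,L_1\setminus N)>0$ forces, for $n$ large, the endpoints to lie in a fixed compact set. Indeed, if $\gamma_n(0)\notin N$ then $\gamma_n(0)\in L_0\setminus N$ and $\gamma_n(1)\in L_1$; if also $\gamma_n(1)\notin N$ the distance bound is violated once $1/n<\dist_g(L_0\setminus N,L_1\setminus N)$, and if $\gamma_n(1)\in N$ then $\gamma_n(0)$ lies in the $1/n$-neighborhood of the compact set $N$. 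So for $n$ large, $\gamma_n([0,1])$ is contained in a fixed compact set $N'$ (say the closed $1$-neighborhood of $N$), using also that $\ell_g(\gamma_n)<1/n\leq 1$.

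Next I would extract a limit. By compactness of $N'$, after passing to a subsequence we may assume $\gamma_n(0)\to q$ for some $q\in N'$. Since $\ell_g(\gamma_n)\to 0$, the whole image $\gamma_n([0,1])$ converges to $q$ in Hausdorff distance, and in particular $\gamma_n(0)\to q$ with $\gamma_n(0)\in L_0$ forces $q\in L_0$ (as $L_0$ is closed — it is a properly embedded submanifold, being complete in the induced metric, or one restricts attention to $L_0\cap N'$ which is compact), and likewise $\gamma_n(1)\to q$ with $\gamma_n(1)\in L_1$ forces $q\in L_1$. Hence $q\in L_0\cap L_1$. But then for $n$ large enough, $\gamma_n([0,1])\subset B_r(q)$ since the image shrinks to $q$, contradicting the choice of $\gamma_n$. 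This contradiction proves the lemma, and one can take $\delta(r)$ to be, e.g., the supremum of all $\delta\leq 1$ that work, which is positive by the argument.

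The main obstacle is the step where one concludes $q\in L_0\cap L_1$, i.e. controlling that the relevant pieces of $L_0$ and $L_1$ are closed so that limits of points on them stay on them. This is precisely where the hypothesis $\dist_g(L_0\setminus N,L_1\setminus N)>0$ together with completeness of $(M,g)$ is used: it guarantees that any sequence of short paths between $L_0$ and $L_1$ cannot escape to infinity, confining everything to a compact region where $L_0$ and $L_1$ are closed subsets. One should also be slightly careful that "$\gamma_n([0,1])$ not contained in any $B_r(p)$, $p\in L_0\cap L_1$" is used with the correct quantifier — it is the negation of the conclusion~\eqref{eq:gbp}, so a single witness point $q\in L_0\cap L_1$ with $\gamma_n([0,1])\subset B_r(q)$ suffices for the contradiction. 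No new tools beyond elementary metric-space compactness are needed; this lemma is a soft preliminary feeding into Corollary~\ref{cy:trip} and Theorem~\ref{tm:vg}.
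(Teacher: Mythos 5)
Your argument is correct, but it takes a different route from the paper. The paper's proof is direct and constructive: writing $U_s$ for the open $s$-neighborhood of $L_0\cap L_1$, it simply sets
\[
\delta(r) = \min\{r/2,\ \dist_g(L_0 \setminus U_{r/2},\, L_1 \setminus U_{r/2})\},
\]
observes that a path of length less than this quantity cannot have both endpoints outside $U_{r/2}$, so one endpoint lies within $r/2$ of some $p \in L_0\cap L_1$, and then the whole image lies in $B_r(p)$ by the triangle inequality since $\ell_g(\gamma) < r/2$. Your proof by contradiction plus sequential compactness proves the same statement; the one ingredient both proofs share is that $L_0,L_1$ are closed subsets of $M$ and that everything relevant is confined to a compact region via the hypothesis $\dist_g(L_0\setminus N, L_1\setminus N)>0$ and completeness of $g$ (you need this to conclude $q\in L_0\cap L_1$; the paper needs it to see that $\dist_g(L_0 \setminus U_{r/2}, L_1 \setminus U_{r/2})>0$). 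The trade-off is that the paper's version produces an explicit $\delta(r)$, whereas yours is non-constructive; since the later applications (Theorem~\ref{tm:prec}, Proposition~\ref{pr:lsa}, Lemma~\ref{lm:combo}) only use the existence of some positive $\delta(r)$, your softer argument suffices for the rest of the paper.
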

\begin{proof}
Take
\[
\delta(r) = \min\{r/2, \dist_g(L_0 \setminus U_{r/2} , L_1 \setminus U_{r/2})\}.
\]
Since
\[
\ell_g(\gamma) < \dist_g(L_0 \setminus U_{r/2} , L_1 \setminus U_{r/2}),
\]
either $\gamma(0) \in U_{r/2}$ or $\gamma(1) \in U_{r/2}.$ So, there exists $p \in L_0 \cap L_1$ such that either $\gamma(0) \in B_{r/2}(p)$ or $\gamma(1) \in B_{r/2}(p).$ Since $\ell_g(\gamma) < r/2,$ inclusion~\eqref{eq:gbp} follows.
\end{proof}

Theorem~\ref{tm:vg} is an immediate consequence of the following more precise statement.
\begin{tm}\label{tm:prec}
Let $\epsilon_0 > 0$ satisfy
\begin{align}\label{eq:e1}
\epsilon_0 &\leq \frac{\epsilon_1}{18}, \\
\label{eq:cc}
\epsilon_0 &< \frac{1}{2}\min\{\dist_g(Z,W)| Z,W, \text{components of $L_0\cap L_1$}\},
\end{align}
and with $\nu,C,$ as in Corollary~\ref{cy:cs},
\begin{equation}\label{eq:wr}
C(2\epsilon_0)^{\frac{1}{\nu}} < \frac{\epsilon_1}{9}.
\end{equation}
For $\epsilon \in (0,\epsilon_0],$ with $\delta$ as in Lemma~\ref{lm:gbp}, take
\begin{equation}\label{eq:d0}
\delta_0 = \delta\left(\frac{\epsilon}{9}\right).
\end{equation}
Then Theorem~\ref{tm:vg} holds.
\end{tm}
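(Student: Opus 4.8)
The plan is to deduce the two assertions of Theorem~\ref{tm:vg} from the lemmas and corollaries already assembled. Assertion~\ref{it:vg} (existence of $v_\gamma$) splits into the two hypotheses on $\gamma$. If $\gamma([0,1]) \subset B_{\epsilon/9}(p)$ for some $p \in L_0 \cap L_1$, then Corollary~\ref{cy:trip} directly produces $v_\gamma$ mapping into $B_\epsilon(\gamma(0))$, using $\epsilon \leq \epsilon_1$, which holds since $\epsilon \leq \epsilon_0 \leq \epsilon_1/18 \leq \epsilon_1$ by~\eqref{eq:e1}. If instead $\ell_g(\gamma) < \delta_0 = \delta(\epsilon/9)$, then Lemma~\ref{lm:gbp} gives a point $p \in L_0 \cap L_1$ with $\gamma([0,1]) \subset B_{\epsilon/9}(p)$, and we again invoke Corollary~\ref{cy:trip}. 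So in both cases we reduce to Corollary~\ref{cy:trip}, and~\ref{it:vg} follows.

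For assertion~\ref{it:inv}, the independence of $\int_H v_\gamma^*\omega$ on the choice of $v_\gamma$, I would argue as follows. Suppose $v_\gamma, v_\gamma'$ are two piecewise smooth maps $H \to M$ satisfying the boundary conditions of~\ref{it:vg}; in particular both agree with $\gamma$ on the circular arc $e^{\sqrt{-1}\pi t}$, both send $[-1,0]$ into $L_1$ and $[0,1]$ into $L_0$, and both land in $B_\epsilon(\gamma(0))$. Gluing $v_\gamma$ and $v_\gamma'$ along the common circular arc produces a piecewise smooth map $F$ from a disk $D$ (obtained by doubling $H$ across the arc) into $B_\epsilon(\gamma(0))$, with $F$ sending the two boundary radii — the images of the two copies of $[-1,0]$ and $[0,1]$ — alternately into $L_1$ and $L_0$. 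Concretely, $\partial D$ is subdivided into four arcs mapping into $L_1, L_0, L_0, L_1$ respectively; the two $L_0$-arcs meet at the image of $0$, and likewise the two $L_1$-arcs. Then
\[
\int_H v_\gamma^*\omega - \int_H (v_\gamma')^*\omega = \int_D F^*\omega,
\]
with an appropriate orientation convention, and the task is to show the right-hand side vanishes.

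The key point — and the step I expect to be the main obstacle — is showing $\int_D F^*\omega = 0$ when $L_0 \cap L_1$ may be a badly singular analytic set, so that one cannot simply homotope $F$ rel boundary to a constant within $B_\epsilon(\gamma(0))$ while keeping the arcs in the $L_i$. The idea is to cap off $F$: at the two vertices where consecutive boundary arcs lie in the same Lagrangian $L_i$, the two arcs have endpoints $\gamma(i)$ and a pair of points on $L_i$ (the images of $\pm 1$, i.e.\ $\gamma(1)$ for $L_0$ and $\gamma(0)=\gamma(1)$ boundary behavior), and more relevantly, along each component of $\partial D$ that should collapse, we use Corollary~\ref{cy:cs} to join points of the same component of $L_0 \cap L_1$ by short curves inside $L_0 \cap L_1$, then use Lemma~\ref{lm:tri} (the triangle-filling lemma) with $Y = L_i$ to fill the resulting small triangles by maps into $L_i$, on which $\omega$ pulls back to zero since $L_i$ is Lagrangian. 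The smallness estimates~\eqref{eq:e1}, \eqref{eq:cc}, \eqref{eq:wr} are exactly what guarantee that all the auxiliary curves and filling disks stay within the injectivity-radius-controlled balls where Lemma~\ref{lm:tri} applies, and that distinct components of $L_0 \cap L_1$ cannot interfere. After these caps are attached, $F$ extends to a map $\widetilde F$ of a closed disk whose entire boundary lies in a single Lagrangian, or more precisely one decomposes $D$ into pieces each of which either maps into some $L_i$ (contributing $0$) or is a disk with boundary contractible in $B_\epsilon(\gamma(0)) \subset M$ via a homotopy rel a point; since $\omega$ restricted to the contractible ball $B_\epsilon(\gamma(0))$ is exact, $\omega = d\lambda$ there, and Stokes's theorem together with $\lambda|_{L_i}$ being closed-modulo-exact (one shrinks $\epsilon$ so that $L_i \cap B_\epsilon(\gamma(0))$ is also contractible, hence $\lambda|_{L_i} = df_i$) forces $\int_D F^*\omega = 0$. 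Carrying out this decomposition carefully, tracking orientations, and verifying the metric inequalities suffice at every stage is the bulk of the work; I would organize it as a sequence of \textbf{Step} environments, first reducing to the exact local model, then handling the caps via Corollaries~\ref{cy:cs} and~\ref{cy:trip} and Lemma~\ref{lm:tri}, then concluding with Stokes.
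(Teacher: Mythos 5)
Your proposal is correct and follows essentially the same route as the paper: part~\ref{it:vg} via Lemma~\ref{lm:gbp} and Corollary~\ref{cy:trip}, and part~\ref{it:inv} by using~\eqref{eq:cc} to place the two center points $v_\gamma(0),v_\gamma'(0)$ in one component of $L_0\cap L_1$, joining them by a short path from Corollary~\ref{cy:cs} (controlled by~\eqref{eq:wr}), filling the two resulting triangles inside $L_0$ and $L_1$ by Lemma~\ref{lm:tri}, and applying Stokes with $\omega$ exact on a small ball and $\omega|_{L_i}=0$ on the caps. The only superfluous step is the remark about arranging $\lambda|_{L_i}=df_i$; since the caps map entirely into the Lagrangians, their $\omega$-integrals vanish outright and no exactness of $\lambda|_{L_i}$ is needed.
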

\begin{figure}[ht]
\centering
\includegraphics[width=12cm]{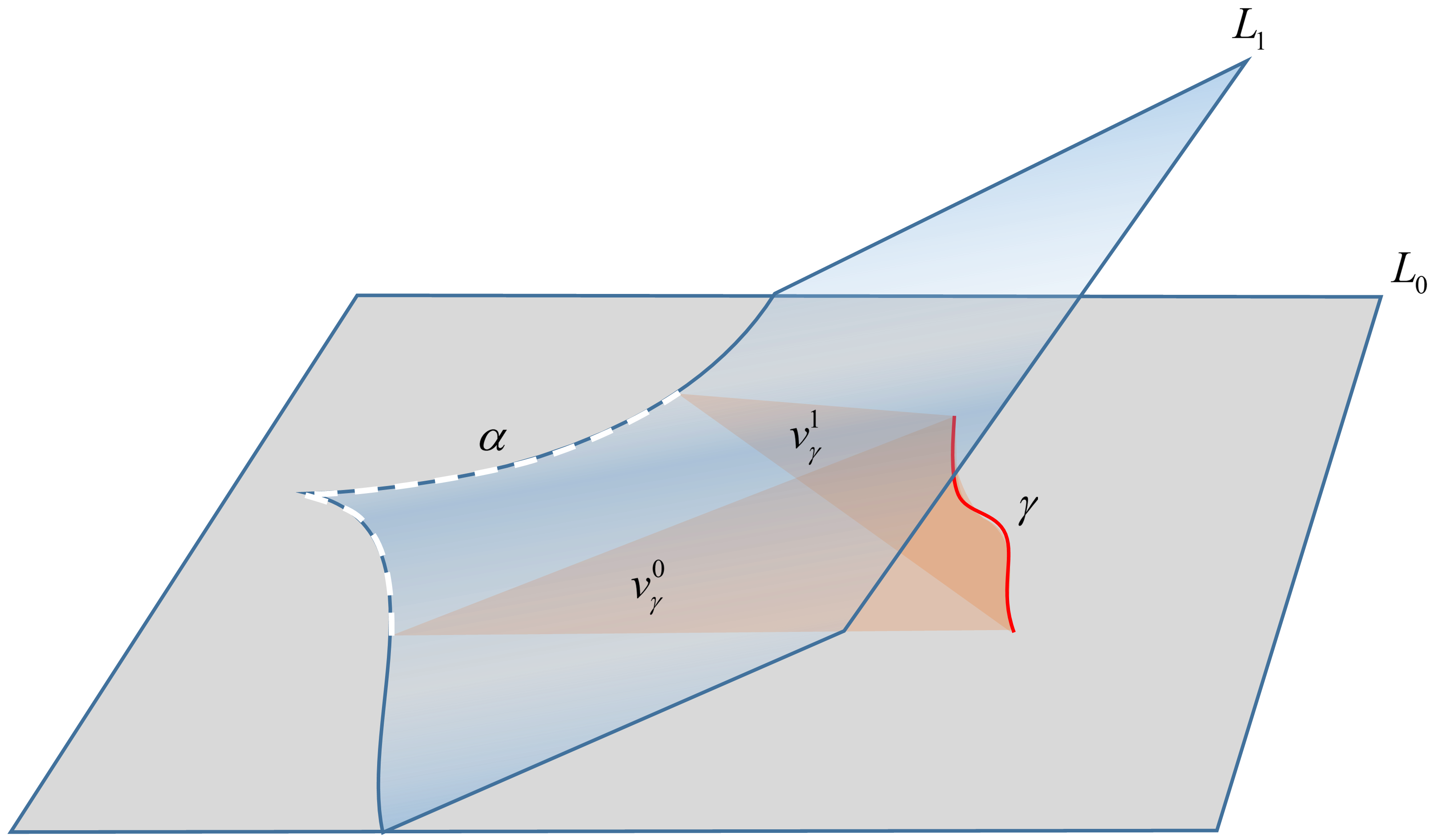}
\caption{A situation in which the path $\alpha$ must pass through a singularity of $L_0 \cap L_1.$ Nonetheless, its length is controlled by Whitney regularity.}
\label{fig:claimb}
\end{figure}
\begin{proof}
Inequality~\eqref{eq:e1}, Lemma~\ref{lm:gbp} and Corollary~\ref{cy:trip} give~\ref{it:vg}.

It remains to prove~\ref{it:inv}. Suppose $v_\gamma^j : H \to B_\epsilon(\gamma(0))$ for $j = 0,1,$ are two maps satisfying the conditions of~\ref{it:vg}. In particular,
\[
\dist_g\left(v_\gamma^0(0),v_\gamma^1(0)\right) < 2\epsilon.
\]
It follows from inequality~\eqref{eq:cc} that $v_\gamma^0(0)$ and $v_\gamma^1(0)$ belong to the same component of $L_0 \cap L_1.$ By Corollary~\ref{cy:cs} and inequality~\eqref{eq:wr}, choose a piecewise smooth $\alpha:[0,1] \to L_0\cap L_1 \subset M$ such that $\alpha(0) = v_\gamma^0(0)$ and $\alpha(1) = v_\gamma^1(0)$ and
\begin{equation}\label{eq:al}
\ell_g(\alpha)< \frac{\epsilon_1}{9}.
\end{equation}
See Figure~\ref{fig:claimb}. By inclusion~\eqref{eq:LM}, we have
\begin{gather*}
v_\gamma^j([0,1]) \subset B_{\epsilon}(\gamma(0)) \subset B_{2\epsilon}(\alpha(j)) \subset B_{4\epsilon}(\alpha(j),L_0),\\
v_\gamma^j([-1,0]) \subset B_\epsilon(\gamma(0)) \subset B_{2\epsilon}(\alpha(j)) \subset B_{4\epsilon}(\alpha(j),L_1).
\end{gather*}
By inequality~\eqref{eq:e1}, we may replace $4\epsilon$ by $2\epsilon_1/9.$ Moreover, by~\eqref{eq:al} we have
\[
B_{4\epsilon}(\alpha(1),L_i) \subset B_{\epsilon_1/3}(\alpha(0),L_i).
\]
For $i = 0,1,$ apply Lemma~\ref{lm:tri} with
\begin{gather*}
X = L_i, \quad \delta = \epsilon_1/3, \quad \chi = \alpha, \\
\beta_j(x) =
\begin{cases}
v_\gamma^j(1-x), & i = 0, \\
v_\gamma^j(x-1), & i = 1,
\end{cases}
\qquad x \in [0,1],
\end{gather*}
to obtain
\[
\sigma_i : H \to B_{\epsilon_1}(\alpha(0),L_i)
\]
such that for $x \in [0,1],$
\begin{gather*}
\sigma_0(x) = v_\gamma^0(1-x), \qquad \sigma_0(-x) = v_\gamma^1(1-x), \\
\sigma_1(x) = v_\gamma^0(x-1), \qquad \sigma_1(-x) = v_\gamma^1(x-1),
\end{gather*}
and
\[
\sigma_i(e^{\sqrt{-1}\pi t}) = \alpha(t).
\]
By the Poincar\'e lemma, $\omega|_{B_{\epsilon_1}(\alpha(0))}$ is exact. Since $\sigma_i,v_\gamma^j,$ all map into $B_{\epsilon_1}(\alpha(0)),$ Stokes' theorem implies
\[
\int_H (v_\gamma^1)^*\omega - \int_H (v_\gamma^0)^*\omega = \int_H \sigma_0^*\omega - \int_H \sigma_1^*\omega.
\]
On the other hand, since $\sigma_j$ maps into the Lagrangian $L_j,$ we have $\sigma_j^*\omega = 0.$
\end{proof}

We will also need the following proposition, which is based on Theorem~\ref{tm:prec}. Let
\[
\kappa: [a,b]\times [0,1] \to M, \qquad  \kappa([a,b]\times\{i\}) \subset L_i, \qquad i = 0,1.
\]
For $s \in [a,b],$ define $\kappa_s : [0,1] \to M$ by $\kappa_s(t) = \kappa(s,t).$
\begin{pr}\label{pr:lsa}
Let $\epsilon_0,\delta_0(\epsilon_0),$ be as in Theorem~\ref{tm:prec}.
Suppose that for all $s \in [a,b],$ either
\[
\ell_g(\kappa_s) < \delta_0(\epsilon_0)
\]
or there exists $p(s) \in L_0 \cap L_1$ such that
\begin{equation}\label{eq:nus}
\kappa_s([0,1]) \subset B_{\frac{\epsilon_0}{9}}(p(s)).
\end{equation}
Then
\[
\int_{[a,b]\times [0,1]} \kappa^*\omega = a(\kappa_1) - a(\kappa_0).
\]
\end{pr}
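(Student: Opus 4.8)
\emph{Proof proposal.} The plan is to show that the function
\[
F(s) = a(\kappa_s) - \int_{[a,s]\times[0,1]}\kappa^*\omega
\]
is locally constant on $[a,b]$; since $[a,b]$ is connected, this forces $F(b) = F(a)$, which is exactly the assertion. Note first that $a(\kappa_s)$ is defined for every $s$: by hypothesis each $\kappa_s$ satisfies one of the two alternatives of Theorem~\ref{tm:prec} with $\epsilon = \epsilon_0$, and one may in fact choose the cap $v_{\kappa_s} : H \to B_{\epsilon_0}(\kappa_s(0))$ so that $q(s) := v_{\kappa_s}(0)$ lies in $L_0 \cap L_1$ (via Corollary~\ref{cy:trip}, reducing the short case to it by Lemma~\ref{lm:gbp}).

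To prove local constancy near a fixed $s_0$, observe that by uniform continuity of $\kappa$ on the compact rectangle, together with the fact that $\kappa_{s_0}([0,1])$ has diameter at most $\max(\delta_0(\epsilon_0),2\epsilon_0/9)$, there is a neighborhood of $s_0$ such that for $s$ in it the images $\kappa_{s_0}([0,1])$, $\kappa_s([0,1])$, the strip $\kappa([s_0,s]\times[0,1])$ and the caps $v_{\kappa_{s_0}}(H)$, $v_{\kappa_s}(H)$ all lie in a single geodesic ball $B_R(\kappa_{s_0}(0))$ with $R$ less than the injectivity radius $\epsilon_N$; on $B_R$ we have $\omega = d\lambda$ by the Poincar\'e lemma. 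Gluing $H$, the rectangle $[s_0,s]\times[0,1]$ and a second, orientation-reversed copy of $H$ along the arcs carrying $\kappa_{s_0}$ and $\kappa_s$ produces a piecewise smooth map $\Sigma$ of a disk into $B_R$, built from $v_{\kappa_{s_0}}$, $\kappa|_{[s_0,s]\times[0,1]}$ and $v_{\kappa_s}$, with
\[
\int_\Sigma \omega = a(\kappa_{s_0}) + \int_{[s_0,s]\times[0,1]}\kappa^*\omega - a(\kappa_s).
\]
By Stokes' theorem it remains to show $\int_{\partial\Sigma}\lambda = 0$; the case $s < s_0$ is entirely analogous.

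Tracing the boundary of the glued disk, $\partial\Sigma$ is the concatenation $P_0 * P_1$ of a path $P_0$ in $L_0$ from $q(s)$ to $q(s_0)$ (made of the two $L_0$-segments of the caps and the edge $\kappa|_{[s_0,s]\times\{0\}}$) and a path $P_1$ in $L_1$ from $q(s_0)$ to $q(s)$ (the two $L_1$-segments of the caps and the edge $\kappa|_{[s_0,s]\times\{1\}}$). Since $q(s_0),q(s)\in L_0\cap L_1$ with $\dist_g(q(s_0),q(s)) < 2\epsilon_0 + o(1)$ as $s\to s_0$, inequality~\eqref{eq:cc} forces them into the same connected component of $L_0\cap L_1$ once $s$ is close enough. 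By Corollary~\ref{cy:cs} and inequality~\eqref{eq:wr}, choose a piecewise smooth $\alpha$ in $L_0\cap L_1$ from $q(s_0)$ to $q(s)$ with $\ell_g(\alpha) < \epsilon_1/9$; then, using~\eqref{eq:LM} and the injectivity radii $\epsilon_{L_i}$, the loops $P_0 * \alpha$ and $\alpha * P_1^{-1}$ lie in small geodesic balls of $L_0$ and $L_1$ respectively, hence bound piecewise smooth disks $\sigma_0\subset L_0$ and $\sigma_1\subset L_1$ — for instance by iterating Lemma~\ref{lm:tri}. Since $\sigma_i^*\omega = 0$ (the $L_i$ are Lagrangian), Stokes gives $\int_{P_0}\lambda = -\int_\alpha\lambda$ and $\int_{P_1}\lambda = \int_\alpha\lambda$, so $\int_{\partial\Sigma}\lambda = 0$ as desired.

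The main obstacle is the metric bookkeeping: one must verify that the two caps, the strip, the connecting path $\alpha$ and the two filling disks $\sigma_0,\sigma_1$ all genuinely fit inside balls small enough both for the Poincar\'e lemma and for the existence of $\alpha$ and the $\sigma_i$. This is exactly what the inequalities~\eqref{eq:e1}, \eqref{eq:cc} and \eqref{eq:wr} defining $\epsilon_0$ are designed to guarantee, and the estimates run parallel to those in the proof of Theorem~\ref{tm:prec}. The only other point requiring care is the elementary topological identification of $\partial\Sigma$ with $P_0 * P_1$, which follows by inspection once the gluing of $H$, the rectangle and $H$ has been set up.
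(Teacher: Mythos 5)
Your argument is correct and reaches the conclusion by the same basic mechanism as the paper: subdivide the $s$-interval into small pieces, cap off $\kappa_{s_0}$ and $\kappa_s$, close up the boundary of the resulting glued disk inside $L_0$ and $L_1$ using Lemma~\ref{lm:tri}, and apply Stokes' theorem in a contractible ball where $\omega$ is exact (your local-constancy-plus-connectedness framing is just a repackaging of the paper's explicit finite cover and telescoping sum). The one genuine difference is how consecutive caps are related. On each subinterval the paper bases \emph{both} caps at the same point $p(s_i)\in L_0\cap L_1$ — Corollary~\ref{cy:trip} permits this because all the $\kappa_s$ in that subinterval lie in $B_{\epsilon_0/9}(p(s_i))$ — so the leftover boundary regions are honest triangles, Lemma~\ref{lm:tri} applies directly, and Whitney regularity is never invoked inside the induction. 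You instead allow distinct base points $q(s_0)\neq q(s)$ and re-run the uniqueness argument of Theorem~\ref{tm:vg}\ref{it:inv}: connect the base points by a short path $\alpha\subset L_0\cap L_1$ via Corollary~\ref{cy:cs} and~\eqref{eq:wr}, then fill the resulting quadrilaterals. This does work — each quadrilateral is filled by a single application of Lemma~\ref{lm:tri}, taking $\chi$ to be the piecewise smooth concatenation of the rectangle edge with one cap segment and $\beta_0,\beta_1$ the other cap segment and $\alpha$ — but it costs an extra appeal to Whitney regularity at every step and slightly heavier bookkeeping, since the primitive $\lambda$ must live on a contractible set containing not only the caps and the strip but also $\alpha$ and the filling disks $\sigma_0,\sigma_1$. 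Adopting the paper's common-base-point choice would let your local-constancy formulation go through with triangles only, and the remaining estimates then reduce verbatim to those already carried out in the proof of Theorem~\ref{tm:prec}.
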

\begin{proof}
Equation~\eqref{eq:d0} and Lemma~\ref{lm:gbp} imply that for all $s \in [0,1]$ there exists $p(s)$ such that inclusion~\eqref{eq:nus} holds. A compactness argument yields $a \leq s_0 < s_1 < \cdots < s_N \leq b$ and $\vartheta_i > 0$ for $i = 0,\ldots,N,$ such that
\begin{gather}
[a,b] \subset \bigcup_{i = 1}^N (s_i - \vartheta_i, s_i + \vartheta_i), \notag\\
\label{eq:kinc}
\kappa_s([0,1]) \subset B_{\epsilon_0/9}(p(s_i)), \qquad s \in (s_i - \vartheta_i, s_i +\vartheta_i).
\end{gather}
Without loss of generality, we may assume $s_i-\vartheta_i < s_{i-1} + \vartheta_{i-1}$ and choose $\varsigma_i \in (s_i - \vartheta_i, s_{i-1}+\vartheta_{i-1})$ for $i = 1,\ldots,N,$ such that $\varsigma_1 < \cdots < \varsigma_N.$ Write also $\varsigma_0 = 0$ and $\varsigma_{N+1} = 1.$  We show that
\begin{equation}\label{eq:ki}
\int_{[\varsigma_i,\varsigma_{i+1}]\times[0,1]} \kappa^*\omega = a(\kappa_{\varsigma_{i+1}}) - a(\kappa_{\varsigma_i}).
\end{equation}
for $i = 0,\ldots,N.$

Indeed, for $k = 0,1,$ abbreviate $\kappa_i^k = \kappa_{\varsigma_{i+k}}.$ By inclusion~\eqref{eq:kinc}, apply Corollary~\ref{cy:trip} with $\gamma = \kappa_i^k,$ and $p = p(s_i)$ to obtain
\begin{equation*}
v_i^k := v_{\kappa_i^k}: H \to B_{\epsilon_0}\left(\kappa_i^k(0)\right),
\end{equation*}
with $v_i^k(0) = p(s_i).$ See Figure~\ref{fig:prop}.

\begin{figure}[ht]
\centering
\includegraphics[width=12cm]{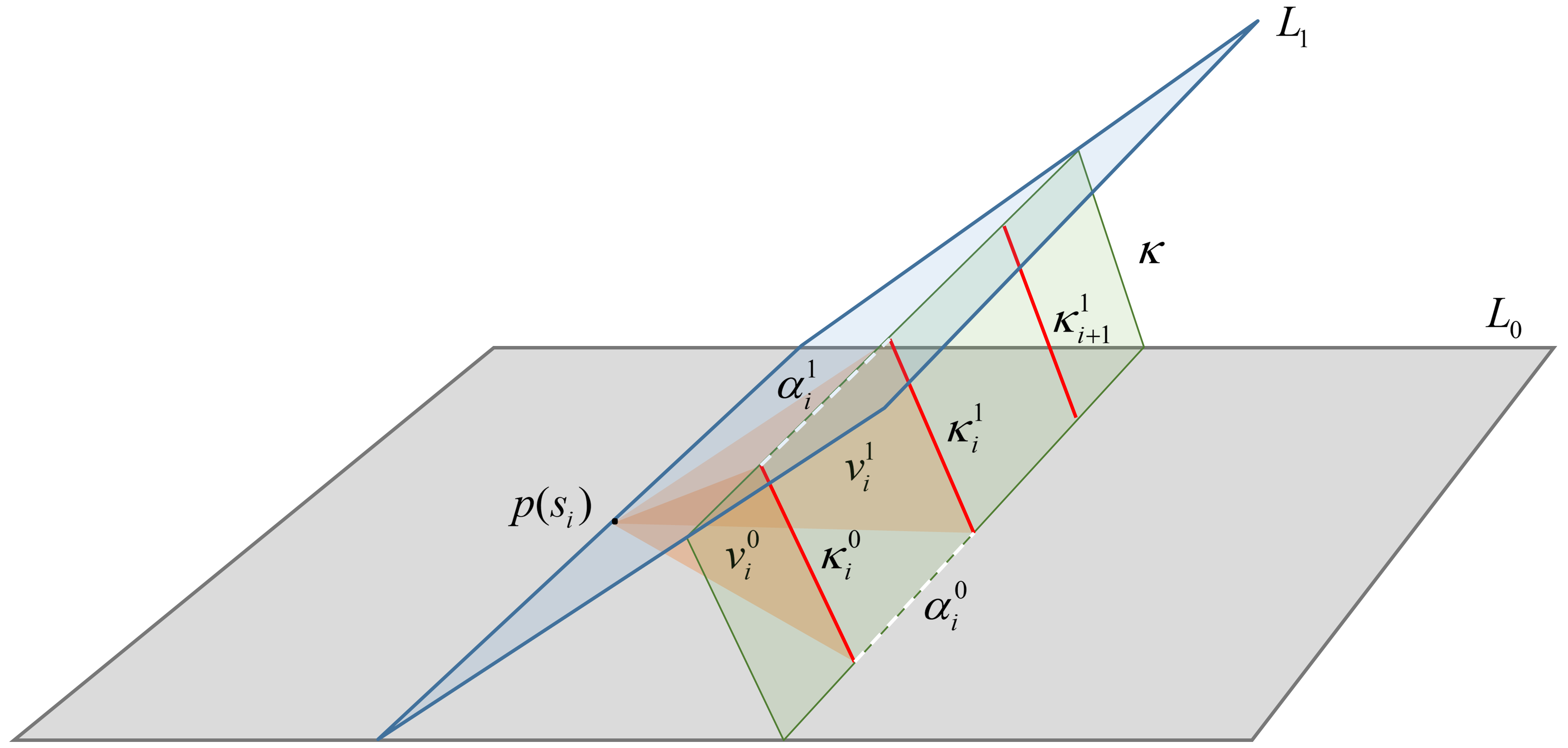}
\caption{}
\label{fig:prop}
\end{figure}

Furthermore, for $j = 0,1,$ let
\[
\alpha_i^j : [0,1] \to L_j
\]
be given by
\[
\alpha_i^j(s) = \kappa((1-s)\varsigma_i + s \varsigma_{i+1},j).
\]
By inclusions~\eqref{eq:kinc} and~\eqref{eq:LM}, we have
\begin{gather*}
v_i^k([0,1]) \subset B_{\epsilon_0}(\kappa_i^k(0)) \subset B_{2\epsilon_0}(\alpha_i^0(0)) \subset B_{4\epsilon_0}(\alpha_i^0(0),L_0),\\
v_i^k([-1,0]) \subset B_{\epsilon_0}(\kappa_i^k(0)) \subset B_{2\epsilon_0}(\alpha_i^1(0)) \subset B_{4\epsilon_0}(\alpha_i^1(0),L_1).
\end{gather*}
By inequality~\eqref{eq:e1}, we have $4\epsilon_0 \leq 2\epsilon_1/9.$
For $j = 0,1,$ apply Lemma~\ref{lm:tri} with
\begin{gather*}
X = L_j, \quad \delta = 2\epsilon_1/9, \quad \chi = \alpha^j_i, \\
\beta_k(x) =
\begin{cases}
v_i^k(x), & i = 0,\\
v_i^k(-x), & i = 1,
\end{cases}
\qquad x \in [0,1],
\end{gather*}
to obtain
\[
\sigma_i^j : H \to B_{2\epsilon_1/3}(\alpha^j_i(0),L_j)
\]
such that for $x \in [0,1],$
\begin{gather*}
\sigma_i^0(x) = v_i^0(x), \qquad \sigma_i^0(-x) = v_i^1(x), \\
\sigma_i^1(x) = v_i^0(-x), \qquad \sigma_i^1(-x) = v_i^1(-x),
\end{gather*}
and
\[
\sigma_i^j(e^{\sqrt{-1}\pi t}) = \alpha_i^j(t).
\]
Inclusion~\eqref{eq:kinc} implies that $\kappa([\varsigma_i,\varsigma_{i+1}]\times[0,1])$ and, in particular, $\kappa_i^1(0)$ and $\kappa_i^0(1) = \alpha_i^1(0),$ are all contained in the $\frac{2\epsilon_0}{9}$ ball centered at $\kappa_i^0(0) = \alpha_i^0(0).$ It follows from inequality~\eqref{eq:e1} that $\sigma_i^j,v_i^k,$ and $\kappa|_{[\varsigma_i,\varsigma_{i+1}]\times[0,1]},$ all map into $B_{\epsilon_1}(\alpha_i^0(0)).$
By the Poincar\'e lemma, $\omega|_{B_{\epsilon_1}(\alpha_i^0(0))}$ is exact. So, Stokes' theorem implies
\[
\int_{[\varsigma_i,\varsigma_{i+1}]\times[0,1]} \kappa^*\omega =\int_H (v_i^1)^*\omega - \int_H (v_i^0)^*\omega + \int_H (\sigma_i^0)^*\omega - \int_H (\sigma_i^1)^*\omega.
\]
On the other hand, since $\sigma_j$ maps into the Lagrangian $L_j,$ we have $\sigma_j^*\omega = 0.$ Moreover, by definition,
\[
\int_H(v_i^k)^*\omega = a(\kappa_{\varsigma_{i+k}}).
\]
Equation~\eqref{eq:ki} follows. Finally, by~\eqref{eq:ki} we have
\[
\int_{[a,b]\times [0,1]}\kappa^*\omega = \sum_{i=0}^N \int_{[\varsigma_i,\varsigma_{i+1}]\times[0,1]} \kappa^*\omega = a(\kappa_{\varsigma_{N+1}}) - a(\kappa_{\varsigma_0}).
\]
\end{proof}

\section{{\L}ojasiewicz isoperimetric inequality}
\label{sec:lii}
We continue with the assumptions and notation at the beginning of Section~\ref{sec:lsa}.
The main result of this section is the following isoperimetric inequality for the local symplectic action.
\begin{tm}\label{tm:lii}
There exist constants $\delta_1 > 0$ and $\beta > 1$ such that for all $\gamma: [0,1] \to M$ with $\gamma(0) \in L_0, \gamma(1) \in L_1$ and $\ell_g(\gamma) < \delta_1$, the local symplectic action $a(\gamma)$ is well defined and
\begin{equation}\label{eq:isoper}
|a(\gamma)| \leq \ell_g(\gamma)^\beta.
\end{equation}
\end{tm}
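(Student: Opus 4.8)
The plan is to reduce the isoperimetric inequality~\eqref{eq:isoper} to the \L ojasiewicz gradient inequality for a suitable real analytic function, following the pattern of Simon's work. First I would set up the relevant function. Using Theorem~\ref{tm:prec}, for $\gamma$ with $\ell_g(\gamma)$ sufficiently small there is a canonical nearby point $p = p(\gamma) \in L_0 \cap L_1$ with $\gamma([0,1]) \subset B_{\epsilon/9}(p)$, and by Corollary~\ref{cy:trip} a filling $v_\gamma : H \to B_\epsilon(\gamma(0))$ with $v_\gamma(0) = p$. The idea is to express $a(\gamma) = \int_H v_\gamma^*\omega$ as the value of a real analytic ``action functional'' on a finite-dimensional space of short paths and then apply \L ojasiewicz. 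Concretely, fix local real analytic coordinates near each component of $L_0 \cap L_1$ in which $\omega$, $L_0$ and $L_1$ are real analytic; using a primitive $\lambda$ of $\omega$ on a small ball (Poincar\'e lemma), write $a(\gamma) = \int_{\partial H} v_\gamma^*\lambda$, which by Stokes and the Lagrangian condition becomes $\int_0^1 (\gamma^*\lambda - d(\text{stuff on } L_i)) $; after a choice of primitives $f_i$ of $\lambda|_{L_i}$, one gets $a(\gamma)$ expressed as a genuine functional $\mathcal{A}$ of the path $\gamma$ together with its endpoints on $L_0, L_1$, with $\mathcal{A}$ real analytic in appropriate coordinates and with critical points exactly the constant paths landing in $L_0 \cap L_1$ (i.e.\ $d\mathcal{A} = 0 \iff \gamma$ is constant at a point of $L_0 \cap L_1$, by nondegeneracy of $\omega$).

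Next I would invoke the \L ojasiewicz gradient inequality in the appropriate Banach/Hilbert-space form (as in Simon): near a critical point, $|\mathcal{A}(\gamma) - \mathcal{A}(\text{crit})| \leq c \,\|\nabla \mathcal{A}(\gamma)\|^{1/(1-\theta)}$ with $\theta \in [1/2,1)$, equivalently $|\mathcal{A}(\gamma)| \leq c\, \|\nabla\mathcal{A}(\gamma)\|^{2-\eta}$ for some small $\eta > 0$. The gradient $\nabla \mathcal{A}$, with respect to an $L^2$-type metric, is essentially the pointwise obstruction to $\gamma$ being tangent to both Lagrangians / to $\omega$ pairing trivially, so $\|\nabla \mathcal{A}(\gamma)\|$ is bounded by (a constant times) $\ell_g(\gamma)$ — indeed a short path near $L_0 \cap L_1$ has small gradient, controlled linearly by its length, since the ``defect'' vanishes to first order at constant paths in $L_0 \cap L_1$. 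Combining, $|a(\gamma)| = |\mathcal{A}(\gamma)| \leq c\, \ell_g(\gamma)^{2-\eta}$, and then absorbing the constant $c$ at the cost of a slightly smaller exponent (valid since $\ell_g(\gamma)$ is small) yields $|a(\gamma)| \leq \ell_g(\gamma)^\beta$ with $\beta = 2 - \eta' > 1$, for $\ell_g(\gamma) < \delta_1$. The well-definedness of $a(\gamma)$ for $\ell_g(\gamma) < \delta_1 \leq \delta_0$ is already Theorem~\ref{tm:vg}.

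One technical point that needs care: \L ojasiewicz's inequality is a statement about a single real analytic function near a single critical point, whereas here the critical set $L_0 \cap L_1$ may be a positive-dimensional, singular real analytic variety, and short paths $\gamma$ may sit near different points or even near singular strata of it. This is where Whitney regularity (Theorem~\ref{tm:wr}, Corollary~\ref{cy:cs}) and a compactness/covering argument over (a compact exhaustion of) $L_0 \cap L_1$ enter: one gets uniform constants $c, \theta$ over all of $N \cap L_0 \cap L_1$, using that the constants in \L ojasiewicz's inequality can be chosen locally uniformly, together with the local triviality afforded by the Whitney condition to patch the estimate across strata. I expect this uniformity over the (possibly badly singular, non-compact) intersection $L_0 \cap L_1$ to be the main obstacle; away from it, near points where $L_0, L_1$ are far apart, $\ell_g(\gamma) < \delta_1$ is impossible by the positive-distance hypothesis, so there is nothing to prove. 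The remaining steps — verifying real analyticity of $\mathcal{A}$ in suitable coordinates, identifying its critical set, and bounding $\|\nabla\mathcal{A}(\gamma)\|$ linearly by $\ell_g(\gamma)$ — are routine once the setup is in place.
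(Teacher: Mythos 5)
You have the right two ingredients---a primitive $\lambda$ of $\omega$ together with primitives of $\lambda|_{L_i}$, and \L ojasiewicz's gradient inequality---but the way you combine them leaves the essential step unproved. Your argument hinges on a \L ojasiewicz--Simon gradient inequality for an action functional $\mathcal{A}$ on a path space, with constants uniform near the critical set, which is the copy of $L_0\cap L_1$ consisting of constant paths. That critical set is a possibly singular real analytic variety along which the kernel of the Hessian of $\mathcal{A}$ (namely $T_pL_0\cap T_pL_1$) jumps in dimension, so the standard \L ojasiewicz--Simon machinery---which proceeds by Lyapunov--Schmidt reduction near a single critical point with Fredholm Hessian and yields only local constants---does not apply off the shelf. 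Establishing that inequality here, uniformly over the singular intersection, is not a ``routine'' step once ``the setup is in place''; it essentially \emph{is} the theorem. Whitney regularity does not rescue this: in the paper it enters only in Section~\ref{sec:lsa} to make $a(\gamma)$ well defined (Theorem~\ref{tm:vg}), not to patch gradient inequalities across strata.

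The paper's proof avoids infinite dimensions entirely, and I would redo your argument along its lines. In a real analytic Darboux chart one arranges $L_0 = \R^n$ and $L_1 = \graph(\nabla f)$ for a single real analytic $f$ with $\nabla f(0)=0$ (Lemma~\ref{lm:llii}). Writing $\lambda_0 = -\sum_i y_i\,dx_i$ and $\gamma = k + \i h$, Stokes' theorem gives
\[
a(\gamma) = \bigl(f(\pi(c(1))) - f(\pi(c(0)))\bigr) + \int_0^1\gamma^*\lambda_0,
\]
where $c$ is the boundary arc of $v_\gamma$ lying in $L_1$ and $\pi(x+\i y)=x$. The integral term is at most $\ell_{g_0}(\gamma)^2$ directly. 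For the first term, the classical finite-dimensional gradient inequality (Theorem~\ref{tm:gi}) applied to $f$ at $\pi(c(1)) = k(1)$ gives $|f(k(1)) - f(0)| \leq |\nabla f(k(1))|^{1/\theta}$, and the key identity is $\nabla f(k(1)) = h(1)$ because $\gamma(1)\in\graph(\nabla f)$, while $|h(1)| = |h(1)-h(0)|\leq\ell_{g_0}(\gamma)$ because $\gamma(0)\in\R^n$; the same inequality forces $f\equiv f(0)$ on $L_0\cap L_1$ near $0$, which disposes of $f(\pi(c(0)))$. Since the classical inequality requires no nondegeneracy hypothesis, the singularity of $L_0\cap L_1$ costs nothing, and uniformity of $\beta$ and $\delta_1$ comes from covering the compact set $L_0\cap L_1$ by finitely many such charts and taking a Lebesgue number. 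In short: the finite-dimensional analytic function to which \L ojasiewicz must be applied is the generating function $f$ itself, not a reduced path-space functional whose construction you have deferred.
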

The proof of Theorem~\ref{tm:lii} is given below. A key ingredient is \L ojasiewicz's gradient inequality~\cite{_Loj65_}.
\begin{tm}[\L ojasiewicz]\label{tm:gi}
Let $f$ be an analytic function on a neighborhood of $0 \in \R^n.$ Then there exists $\theta \in (0,1)$ and a possibly smaller neighborhood $U$ of $0$ such that
\begin{equation}\label{eq:gi}
|\nabla f(y)| \geq |f(y)-f(0)|^\theta, \qquad  y \in U.
\end{equation}
\end{tm}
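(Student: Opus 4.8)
Theorem~\ref{tm:gi} is the classical gradient inequality of \L ojasiewicz, so the plan is to recall the standard argument. First I would reduce to the interesting case. Subtracting a constant, assume $f(0)=0$. If $\nabla f(0)\neq 0$, then $|\nabla f|$ is bounded below by a positive constant on a small ball while $|f(y)-f(0)|^{\theta}\to 0$ as $y\to 0$ for every $\theta>0$, so~\eqref{eq:gi} holds on a smaller $U$; hence assume $0$ is a critical point. Next I would shrink $U$ so that every critical point of $f$ in $U$ lies in $f^{-1}(0)$: otherwise there are critical points $p_k\to 0$ with $f(p_k)\neq 0$, and the curve selection lemma applied to the analytic set $\{\nabla f=0\}\cap\{f\neq 0\}$, which then has $0$ in its closure, yields an analytic arc $\gamma$ with $\gamma(0)=0$ and $\gamma(t)$ critical, $f(\gamma(t))\neq 0$ for small $t>0$; but $\frac{d}{dt}f(\gamma(t))=\langle\nabla f(\gamma(t)),\dot\gamma(t)\rangle=0$ forces $f(\gamma(t))\equiv f(0)=0$, a contradiction.

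The heart of the matter is a lower bound for $|\nabla f|$ in terms of $|f|$ near $0$, and the cleanest route is resolution of singularities. By Hironaka's theorem there is a proper real-analytic modification $\pi:\widetilde U\to U$, an isomorphism over $U\setminus f^{-1}(0)$, such that near each point of $\pi^{-1}(0)$ one has, in suitable coordinates, $f\circ\pi=u\cdot x_1^{a_1}\cdots x_n^{a_n}$ with $u$ a nonvanishing unit and with the Jacobian of $\pi$ also a unit times a monomial. For such a monomial (times a unit) a direct computation gives $|\nabla(f\circ\pi)|\geq c_0\,|f\circ\pi|^{\theta_0}$ with $\theta_0<1$ depending only on the $a_i$: bound $|\nabla(f\circ\pi)|$ below by $\max_i (a_i\,|f\circ\pi|/|x_i|)$ and $|f\circ\pi|$ above by a power of $\max_i|x_i|$. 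I would then descend this estimate: the chain rule writes $\nabla f$ at $\pi(q)$ through $\nabla(f\circ\pi)(q)$ and $(d\pi_q)^{-1}$, whose entries are controlled by negative powers of the exceptional monomial; combining with the monomial bound upstairs and a compactness argument over $\pi^{-1}(\overline B)$ for a small ball $B\ni 0$ gives $|\nabla f(y)|\geq c_1\,|f(y)|^{\theta_1}$ for some $c_1>0$, $\theta_1\in(0,1)$, on a neighborhood of $0$.

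Finally the constant $c_1$ is absorbed by enlarging the exponent slightly: choose $\theta\in(\theta_1,1)$ and shrink $U$ so that $|f(y)-f(0)|<c_1^{1/(\theta-\theta_1)}$ there; then $c_1\,|f-f(0)|^{\theta_1}\geq|f-f(0)|^{\theta}$ on $U$, which is~\eqref{eq:gi}. I expect the main obstacle to be the descent through $\pi$: one must control the blow-up of $(d\pi)^{-1}$ along the exceptional divisor uniformly and verify that the transported exponent stays below $1$. An alternative avoiding resolution is \L ojasiewicz's original induction on $n$, or, in present-day language, a curve-selection and stratification argument that reduces~\eqref{eq:gi} to the one-variable case along analytic arcs plus a uniformity estimate over a Whitney stratification of $f^{-1}(0)$; these are of comparable difficulty.
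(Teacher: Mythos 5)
The paper does not prove Theorem~\ref{tm:gi}: it is quoted as a classical result of \L ojasiewicz~\cite{_Loj65_}, with~\cite[Prop.~6.8]{_BM88_} cited for an exposition, so there is no internal proof to compare against. Your sketch is a correct outline of one of the two standard proofs. It differs in route from the one the paper points to: Bierstone--Milman derive the gradient inequality from the semianalytic machinery (regular separation and the distance-function \L ojasiewicz inequality), whereas you go through resolution of singularities, monomializing $f\circ\pi$ and the Jacobian of $\pi$ simultaneously and proving the estimate upstairs. Both are legitimate; the resolution route localizes the whole problem to the elementary computation for $u\cdot x^{a}$, where one gets $\theta_0=1-1/|a|$ by taking the coordinate $x_{i_1}$ minimizing $|x_i|$ among those with $a_i\geq 1$ and using $|x_{i_1}|\leq |x^a|^{1/|a|}$.

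One correction to the step you single out as the main obstacle. The descent through $\pi$ does not require $(d\pi_q)^{-1}$ and there is no blow-up to control. The chain rule gives $\nabla(f\circ\pi)(q)=(d\pi_q)^{T}\bigl(\nabla f(\pi(q))\bigr)$, hence
\[
|\nabla f(\pi(q))|\;\geq\;\|d\pi_q\|^{-1}\,|\nabla(f\circ\pi)(q)|\;\geq\;C^{-1}c_0\,|f(\pi(q))|^{\theta_0},
\]
where $C$ bounds $\|d\pi_q\|$ on the compact set $\pi^{-1}(\overline B)$; surjectivity of the proper modification then yields the estimate for every $y$ near $0$. The inverse of $d\pi$ would be needed only for the opposite (and here irrelevant) comparison. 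The remaining gaps in your sketch are routine: the monomial lower bound on $|\nabla(f\circ\pi)|$ is only valid on small polydisks centered on $\pi^{-1}(f^{-1}(f(0)))$, where the term $(\partial_i u)x^a$ is dominated; off a finite cover by such polydisks one has $|f-f(0)|\geq\delta>0$, and there your curve-selection reduction (no critical values other than $f(0)$ near $0$) supplies a positive lower bound for $|\nabla f|$ on a compact set, after which enlarging $\theta$ absorbs all constants exactly as you describe.
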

The proof of Theorem~\ref{tm:gi} uses the theory of semi-analytic sets. A nice exposition is given in~\cite[Prop. 6.8]{_BM88_}.

Let $z_i = x_i + \i y_i$ denote the standard coordinates of $\C^n.$ Let
\[
\omega_0 = \sum_{i = 1}^n dx_i \wedge dy_i
\]
denote the standard symplectic structure on $\C^n$ and let $g_0$ denote the Euclidean metric. Given an open subset $U \subset \R^n$ and $f: U \to \R,$ write
\[
\graph(\nabla f) = \{x + \i y \in \C^n| x \in U, \; y = \nabla f(x)\}.
\]
So, $\graph(\nabla f) \subset \C^n$ is a Lagrangian submanifold.
\begin{lm}\label{lm:llii}
Let $W \subset \C^n$ be an open set containing the origin, and let $f$ be a real analytic function defined on a neighborhood of the origin in $\R^n$ with $\nabla f(0) = 0.$ Let
\[
Q_0 = \R^n \cap W, \quad Q_1 = \graph(\nabla f).
\]
There exist $\beta > 1$ and $\epsilon > 0$ with the following significance.
Suppose $\gamma : [0,1] \to B_\epsilon(0) \subset \C^n$ with $\gamma(i) \in Q_i$ for $i = 0,1,$ and $\ell_{g_0}(\gamma) < 1.$ Let
$v_\gamma : H\to B_\epsilon(0)$ with
\begin{gather*}
v_\gamma([-1,0]) \subset Q_1, \qquad v_\gamma([0,1]) \subset Q_0,\\
v_\gamma(e^{\sqrt{-1}\pi t}) = \gamma(t).
\end{gather*}
Then
\[
\left | \int_H v_\gamma^*\omega_0 \right| \leq 2\ell_{g_0}(\gamma)^\beta.
\]
\end{lm}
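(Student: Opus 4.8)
The goal is to bound the local symplectic action $\int_H v_\gamma^*\omega_0$ between $\R^n$ and the graph $\graph(\nabla f)$ in $\C^n$ by a super-linear power of the length of the boundary path $\gamma$. The key structural observation is that these two Lagrangians are related by the time-one flow of a \textit{fiberwise-linear} Hamiltonian isotopy: interpolating the graph, $Q_1^s := \graph(s\nabla f)$ for $s \in [0,1]$, gives a family of Lagrangians from $Q_0 = Q_1^0$ to $Q_1 = Q_1^1$, generated by the Hamiltonian $h(x,y) = f(x)$ pulled back appropriately, or more concretely generated by the vector field whose flow is $(x,y)\mapsto(x, y + s\nabla f(x))$. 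I plan to exploit this to reduce the action integral to an explicit formula involving $f$ and its derivatives along $\gamma$, and then feed in \L ojasiewicz's gradient inequality (Theorem~\ref{tm:gi}).

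Concretely, first I would observe that by Theorem~\ref{tm:vg}\ref{it:inv} applied in this Euclidean setting (or just directly by Stokes, since $\omega_0 = d\lambda$ with $\lambda = \sum x_i\,dy_i$ on all of $\C^n$), the integral $\int_H v_\gamma^*\omega_0$ depends only on $\gamma$; choose a convenient $v_\gamma$. Decompose $\partial H$ into the arc $e^{\sqrt{-1}\pi t}$ carrying $\gamma$ and the two segments on $\R$, one mapped into $Q_0$ and one into $Q_1$. Using $\omega_0 = d\lambda$, $\int_H v_\gamma^*\omega_0 = \int_{\partial H}v_\gamma^*\lambda$. The segment in $Q_0 = \R^n$ contributes nothing since $\lambda|_{\R^n} = 0$. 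On the segment in $Q_1 = \graph(\nabla f)$, parametrized as $x\mapsto (x,\nabla f(x))$, we have $\lambda = \sum_i x_i\, d(\6_i f) = d f^{\flat}$ for a suitable primitive; in any case $\lambda|_{Q_1}$ is exact, equal to $d g$ where $g$ is determined by $f$ (e.g. $g = \sum x_i \6_i f - f$, the Legendre-type transform). Hence the segment contributions telescope to $g(q_1) - g(q_0')$ where $q_0', q_1$ are the two endpoints on $\R$, which by the matching condition $v_\gamma(e^{\sqrt{-1}\pi t}) = \gamma(t)$ are $\gamma(0)$ and $\gamma(1)$ — but $\gamma(0) \in Q_0$ forces the relevant value to involve $f(\gamma(0)), \nabla f(\gamma(0))$, and $\gamma(1) \in Q_1$ similarly. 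The upshot is an explicit expression for $\int_H v_\gamma^*\omega_0$ purely in terms of $f$, $\nabla f$ evaluated at $\gamma(0)$ and $\gamma(1)$, plus the arc integral $\int \gamma^*\lambda$, which is itself $O(\ell_{g_0}(\gamma))$ times the sup of $|x|$ along $\gamma$, hence $O(\ell_{g_0}(\gamma)\cdot\epsilon)$.

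The main work, and the main obstacle, is converting this into the bound $\leq 2\ell_{g_0}(\gamma)^\beta$ with $\beta > 1$. Since $\nabla f(0) = 0$ and $\gamma$ stays in $B_\epsilon(0)$ with $\ell_{g_0}(\gamma) < 1$, both endpoints are within distance $\leq \ell_{g_0}(\gamma) + (\text{distance from }0)$ — but we do \textit{not} a priori know $\gamma$ starts near $0$; we only know its total length is small. This is where one must be careful: the claim only asserts the existence of \textit{some} $\epsilon$, so I would first shrink $\epsilon$ so that on $B_\epsilon(0)$ the \L ojasiewicz inequality $|\nabla f(y)| \geq |f(y) - f(0)|^\theta$ holds, and then note that the two boundary endpoints $\gamma(0)$ and $\gamma(1)$, lying on $Q_0$ and $Q_1$ respectively and joined by $\gamma$ of length $< 1$ inside $B_\epsilon$, satisfy $|\nabla f(\gamma(0))|$ small on $Q_0$-side... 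Actually the cleanest route: the action difference, by the explicit formula, is controlled by a quantity like $|f(a) - f(b)|$ or $\big|\int_0^1 \langle \nabla f(\text{path}), \dot{(\text{path})}\rangle\big|$ along a path in $Q_1$ from the "base point over $\gamma(1)$" down to $0$ — here is where \L ojasiewicz enters: the gradient inequality lets one trade a bound on $|f - f(0)|$ (which could be as large as $|\nabla f|^{1/\theta}$, a \textit{super}-linear power since $1/\theta > 1$) against the length-scale, and integrating $|\nabla f|\,d(\text{arclength})$ along a short path gives $|f(\text{endpoint}) - f(0)| \lesssim (\text{length})^{1/\theta}$ after invoking the inequality in a Gronwall-type or direct-integration argument (this is precisely the standard \L ojasiewicz trajectory-length estimate). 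Thus with $\beta$ chosen slightly larger than $1$ but $\leq \min(1/\theta,\,\text{other exponents})$, and $\epsilon$ small enough to absorb constants into the factor $2$, the bound follows. I expect the delicate points to be: (i) pinning down the exact primitive/Legendre bookkeeping so the boundary terms genuinely reduce to a $\pm(f - f(0))$-type expression rather than leaving an uncontrolled linear term, and (ii) verifying that one really can take $\epsilon$ independent of $\gamma$ while keeping the constant at $2$ — this should come from the fact that $\ell_{g_0}(\gamma) < 1$ and all error terms carry an extra positive power of $\ell_{g_0}(\gamma)$ or of $\epsilon$.
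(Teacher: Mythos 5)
Your overall strategy --- Stokes' theorem with a primitive of $\omega_0$ that vanishes on $Q_0$, exactness of the primitive restricted to $Q_1$, and the \L ojasiewicz gradient inequality --- is the same as the paper's, but there is a genuine gap in the execution, one you half-spot yourself in your ``delicate point (i)'': the primitive $\lambda=\sum_i x_i\,dy_i$ is the wrong one. With that choice, $\lambda|_{Q_1}=d\bigl(\langle x,\nabla f(x)\rangle - f(x)\bigr)$, so the $Q_1$-segment contributes a term $\langle a,\nabla f(a)\rangle$ with $a=\pi(\gamma(1))$, and the arc contributes $\int \langle k,\dot h\rangle\,dt$ (writing $\gamma=k+\i h$); each of these is only $O(\epsilon\cdot\ell_{g_0}(\gamma))$, i.e.\ \emph{linear} in $\ell_{g_0}(\gamma)$ with a constant proportional to $\epsilon$ rather than to a positive power of $\ell_{g_0}(\gamma)$. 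Since $\ell_{g_0}(\gamma)$ may be arbitrarily small compared to $\epsilon$, the inequality $\epsilon\,\ell_{g_0}(\gamma)\le 2\,\ell_{g_0}(\gamma)^\beta$ fails for any fixed $\beta>1$, and no shrinking of $\epsilon$ repairs this. Your appeal to a ``Gronwall-type'' or trajectory-length form of \L ojasiewicz is also not what is needed here; that integrated argument belongs to the decay estimates of Section~\ref{sec:limit}, whereas this lemma only uses the gradient inequality pointwise at two points.

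The fix is to take the other primitive, $\lambda_0=-\sum_i y_i\,dx_i$. It still vanishes on $Q_0=\R^n$, but now $\lambda_0|_{Q_1}=-\pi^*df$ with no Legendre remainder, so the $Q_1$-segment contributes exactly $f(\pi(c(1)))-f(\pi(c(0)))$, where $c(x)=v_\gamma(-x)$. At $c(0)\in Q_0\cap Q_1$ one has $\nabla f=0$, so the gradient inequality forces $f(c(0))=f(0)$. At the other end, $\gamma(0)\in\R^n$ gives $h(0)=0$, hence $|h(1)|\le\ell_{g_0}(\gamma)$, and $\gamma(1)\in Q_1$ gives $h(1)=\nabla f(k(1))$, so the gradient inequality yields $|f(k(1))-f(0)|\le|\nabla f(k(1))|^{1/\theta}\le\ell_{g_0}(\gamma)^\beta$ with $\beta=1/\theta$. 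The same bound $|h(t)|\le\ell_{g_0}(\gamma)$ makes the arc term satisfy $\bigl|\int h\cdot\dot k\,dt\bigr|\le\ell_{g_0}(\gamma)^2$, and the two estimates combine to give $2\,\ell_{g_0}(\gamma)^\beta$ (taking $\theta\ge 1/2$ without loss of generality, so that $\ell^2\le\ell^\beta$). In short, the whole point is that the fiber coordinate $y$ is bounded by $\ell_{g_0}(\gamma)$ everywhere relevant --- along $\gamma$ because $\gamma(0)\in\R^n$, and on the graph because there $y=\nabla f$ is what \L ojasiewicz controls --- and only the primitive $-\sum_i y_i\,dx_i$ exploits this.
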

\begin{proof}
Apply Theorem~\ref{tm:gi} to $f$ to obtain $\epsilon > 0$ such that the gradient inequality~\eqref{eq:gi} holds for all $y \in B_\epsilon(0) \subset \R^n.$ Take $\beta = 1/\theta.$
Write
\[
\lambda_0 = -\sum_i y_i dx_i,
\]
so $d\lambda_0 = \omega_0.$ Let $c : [0,1] \to Q_1$ be given by $c(x) = v_\gamma(-x).$ So, $c(0) \in Q_0 \cap Q_1$ and $c(1) = \gamma(1).$ Since $\lambda_0|_{Q_0} = 0,$ Stokes' theorem gives
\begin{equation}\label{eq:a1}
\int_H v_\gamma^*\omega_0 = -\int_0^1 c^*\lambda_0 + \int_0^1 \gamma^*\lambda_0.
\end{equation}
Write $\gamma(t) = k(t) + \i h(t)$ with $k,h : [0,1] \to \R^n.$ Since $\gamma(0) \in Q_0$, we have $h(0)=0$ and thus
\begin{equation}\label{eq:betabd}
|h(t)| = |h(t) - h(0)| \leq \ell_{g_0}(\gamma).
\end{equation}
Let $\pi : \C^n \to \R^n$ denote the projection given by $\pi(x+ \i y) = x.$ So,
\[
\lambda_0|_{Q_1} = -\pi^*df|_{Q_1},
\]
and consequently
\[
-\int_{0}^1 c^*\lambda_0 = f(\pi(c(1))) - f(\pi(c(0))).
\]
Since $c(0) \in Q_0 \cap Q_1,$ we have $\pi(c(0)) = c(0)$ and $\nabla f(c(0)) = 0$. So, the gradient inequality~\eqref{eq:gi} implies $f(c(0)) = f(0).$ Furthermore, the gradient inequality gives
\begin{equation}\label{eq:lambda2}
\left |-\int_{0}^1 c^*\lambda_0\right| = |f(\pi(c(1))) - f(\pi(c(0)))| \leq |\nabla f(\pi(c(1))|^\beta.
\end{equation}
Since $c(1) = \gamma(1),$ we have $\pi(c(1)) = \pi(\gamma(1)) = k(1)$. Moreover, $\gamma(1) \in Q_1$ implies $h(1) = \nabla f(k(1))$. So, using inequality~\eqref{eq:betabd}, we obtain
\begin{equation}\label{eq:lambda3}
|\nabla f(\pi(c(1)))| = |h(1)|  \leq \ell_{g_0}(\gamma).
\end{equation}
Combining inequalities~\eqref{eq:lambda2} and~\eqref{eq:lambda3} we conclude
\begin{equation}\label{eq:lambda}
\left |-\int_{0}^1 c^*\lambda_0\right|\leq \ell_{g_0}(\gamma)^\beta.
\end{equation}
On the other hand, to calculate $\int_0^1\gamma^*\lambda_0$ we may assume without loss of generality that $\gamma$ is parameterized by arc-length, so $|\dot\gamma(t)| = \ell_{g_0}(\gamma).$ Thus, inequality~\eqref{eq:betabd} gives
\begin{equation}\label{eq:gamma}
\left|\int_0^1\gamma^*\lambda_0\right| = \left|-\int_{0}^1 h(t) \cdot \dot k(t) dt \right| \leq \ell_{g_0}(\gamma)^2.
\end{equation}
Combining~\eqref{eq:a1},~\eqref{eq:lambda} and~\eqref{eq:gamma}, we obtain the desired bound.
\end{proof}
\begin{proof}[Proof of Theorem~\ref{tm:lii}]
Equip $\C^n$ with the standard symplectic structure $\omega_0$ and the Euclidean metric $g_0.$
By Darboux' theorem, for each point $p \in L_0 \cap L_1,$ choose an open set $\widetilde V_p \subset M$ with $p \in \widetilde V_p,$ an open set $\widetilde W_p \subset \C^n$ with $0 \in \widetilde W_p$ and a real analytic Lipschitz symplectomorphism $\phi_p: \widetilde V_p \to \widetilde W_p$ with $\phi_p(p) = 0.$ After possibly modifying $\phi_p$ and shrinking $\widetilde V_p,\widetilde W_p,$ we may assume that $\phi_p(L_0) = \R^n \cap \widetilde W_p$ and
\[
\phi_p(L_1) = \graph(\nabla f_p),
\]
where $f_p$ is analytic function defined on a neighborhood of $0$ in $\R^n.$
Let $\beta_p>1$ and $\epsilon_p>0$ be the constants obtained from Lemma~\ref{lm:llii} with
\[
W = \widetilde W_p, \qquad  f = f_p.
\]
Write $W_p = B_{\epsilon_p}(0)\cap \widetilde W_p$ and $V_p = \phi_p^{-1}(W_p).$

By compactness of $L_0 \cap L_1,$ choose a finite collection of points
\[
p_1,\ldots,p_N \in L_0\cap L_1
\]
such that $L_0 \cap L_1 \subset \cup_{i = 1}^N V_{p_i}.$  Let $U_r$ denote the metric $r$-neighborhood of $L_0 \cap L_1$ with respect to $g.$ Choose $r_0 > 0$ small enough that
\[
\overline U_{r_0} \subset \cup_{i = 1}^N V_{p_i}.
\]
Let $\epsilon_2$ be a Lebesgue number for the cover $\{V_{p_i}\cap \overline U_{r_0}\}_{i = 1}^N$ of $\overline U_{r_0}.$ Let $\epsilon_0$ be as in Theorem~\ref{tm:vg} and let
\[
\epsilon = \min\left\{\epsilon_0,\epsilon_2,\frac{r_0}{2}\right\}.
\]
Let $\delta_1 = \min\{\delta_0(\epsilon),1\},$ and choose $\beta$ such that
\[
1 < \beta < \min\{\beta_{p_i}\}_{i = 1}^N.
\]

For any path $\gamma : [0,1] \to M$ with $\gamma(i) \in L_i,$ for $i = 0,1,$ and $\ell_g(\gamma) < \delta_1$, there exists $v_\gamma$ as in Theorem~\ref{tm:vg}\ref{it:vg}. Since $v_\gamma(0) \in L_0 \cap L_1$ and $\epsilon \leq \frac{r_0}{2},$ it follows that $v_\gamma(H) \subset U_{r_0}.$ Since $\epsilon \leq \epsilon_2,$ it follows that $v_\gamma(H) \subset V_{p_i}$ for some $i.$ Therefore,
\[
|a(\gamma)| \leq 2 \left(\ell_{g_0}(\phi_{p_i}\circ \gamma)\right)^{\beta_{p_i}} \leq C \ell_g(\gamma)^{\beta},
\]
where $C$ depends only on the Lipschitz constants of $\phi_{p_i}.$ Possibly making $\delta_1$ smaller, we eliminate the constant $C$ and obtain the desired inequality.
\end{proof}

\section{Continuous extension}
\label{sec:limit}


We continue with the assumptions and notation at the beginning of Section~\ref{sec:lsa}.
The main result of this section is the following theorem. Its proof and the resulting proof of Theorem~\ref{tm:conti} are given at the end of the section.

\begin{tm}\label{tm:limit}
Let $u: H \setminus \{0\} \to M$ be a $\Theta$-holomorphic map with $u([-1,0)) \subset L_1$ and $u((0,1]) \subset L_0.$ If $E_{g}(u) < \infty$, then $u$ extends to a continuous map $\bar u: H \to M.$ Moreover, there exist constants $c,\alpha,\epsilon > 0$ such that
\begin{equation}\label{eq:Hd}
|du(z)| \leq \frac{c}{|z|(\log{|z|})^\alpha}, \qquad z \in H, \quad |z| < \epsilon.
\end{equation}
\end{tm}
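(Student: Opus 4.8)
The plan is to straighten the puncture to a half-strip and run a \L ojasiewicz-type energy-decay argument, with Theorem~\ref{tm:lii} supplying the nonlinear input. Via the biholomorphism $z \mapsto -\log z$, identify a punctured neighborhood of $0$ in $H\setminus\{0\}$ with a half-strip $S = [s_0,\infty)\times[0,\pi]$, $w = s+\sqrt{-1}t$, so that $z\to 0$ corresponds to $s\to\infty$, $|z| = e^{-s}$, and (after relabeling $L_0,L_1$ and an affine change in $t$) the boundary edges carry $u(\cdot,0)\in L_0$, $u(\cdot,\pi)\in L_1$. Since the Euclidean metrics on $H$ and $S$ are related by $|dw| = |dz|/|z|$ and $|\log|z||$ equals $s$, the estimate~\eqref{eq:Hd} is equivalent to a bound $|du(s,t)|_g \leq c\, s^{-\alpha}$ on $S$. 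Write $\gamma_s(t) = u(s,t)$ and $\mathcal E(s) = \int_{[s,\infty)\times[0,\pi]} |\partial_s u|_g^2$, which is finite by hypothesis, nonincreasing, and tends to $0$ as $s\to\infty$; here I use $|\partial_s u|_g = |\partial_t u|_g$ (as $g=g_\Theta$ is $\Theta$-invariant) and that this density equals $\pm u^*\omega$ because $\bar\partial_\Theta u = 0$, so $-\mathcal E'(s) = \int_0^\pi |\partial_s u(s,t)|_g^2\, dt$.

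First I would confine the tail: using the finite-energy hypothesis and the separation hypothesis $\dist_g(L_0\setminus N, L_1\setminus N)>0$ (via Lemma~\ref{lm:gbp}), the slices $\gamma_s$ for $s$ large are short paths joining $L_0$ to $L_1$, hence lie in an arbitrarily small neighborhood of $L_0\cap L_1$; in particular the tail of $u$ lies in a fixed compact subset of $M$, where the standard interior and boundary $\epsilon$-regularity estimates for $\Theta$-holomorphic curves with Lagrangian boundary apply~\cite{_MS_,_Wo_}. These give $\|du\|_{C^0(\{s\}\times[0,\pi])}^2 \lesssim \mathcal E(s-1)$, so $\ell_g(\gamma_s) \leq \pi\|du(s,\cdot)\|_{C^0}\to 0$ and $\operatorname{osc}_t \gamma_s \to 0$. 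For $s$ large, Proposition~\ref{pr:lsa} applied to $u|_{[s,s']\times[0,\pi]}$ (reparametrizing the transverse interval to $[0,1]$) gives $\int_{[s,s']\times[0,\pi]} u^*\omega = a(\gamma_{s'}) - a(\gamma_s)$; letting $s'\to\infty$ and using Theorem~\ref{tm:lii} (so $|a(\gamma_{s'})| \leq \ell_g(\gamma_{s'})^\beta\to 0$) yields $\mathcal E(s) = |a(\gamma_s)| \leq \ell_g(\gamma_s)^\beta$. Cauchy--Schwarz gives $\ell_g(\gamma_s)^2 \leq \pi\int_0^\pi|\partial_s u|_g^2\, dt = -\pi\mathcal E'(s)$, so combining, $-\mathcal E'(s) \geq \pi^{-1}\mathcal E(s)^{2/\beta}$. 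Integrating this comparison inequality — exponential decay if $\beta = 2$, finite extinction (hence $u$ eventually constant) if $\beta>2$, and $\mathcal E(s) \lesssim s^{-\beta/(2-\beta)}$ if $1<\beta<2$ — gives $\mathcal E(s) \lesssim s^{-2\alpha}$ for a suitable $\alpha>0$. Feeding this back into the $\epsilon$-regularity estimate gives $|du(s,t)|_g \lesssim \mathcal E(s-1)^{1/2} \lesssim s^{-\alpha}$, which is~\eqref{eq:Hd}.

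It remains to extend $u$ continuously across the puncture, and here — since when $\beta$ is near $1$ the decay is only polynomial, so $\int^\infty \mathcal E(\sigma)^{1/2}d\sigma$ may diverge — one needs a control-function argument in the spirit of \L ojasiewicz~\cite{_Loj84_}. From $\mathcal E(s) \leq \ell_g(\gamma_s)^\beta$ and $\ell_g(\gamma_s) \leq \sqrt\pi\,\|\partial_s u(s,\cdot)\|_{L^2}$ we obtain the gradient-type lower bound $\|\partial_s u(s,\cdot)\|_{L^2} \geq \pi^{-1/2}\mathcal E(s)^{1/\beta}$, whence (the case $\mathcal E\equiv 0$ past some $s$ being trivial)
\[
\int_{s_0}^{\infty} \|\partial_s u(\sigma,\cdot)\|_{L^2}\, d\sigma
= \int_{s_0}^{\infty} \frac{-\mathcal E'(\sigma)}{\|\partial_s u(\sigma,\cdot)\|_{L^2}}\, d\sigma
\leq \sqrt\pi \int_{s_0}^{\infty} \frac{-\mathcal E'(\sigma)}{\mathcal E(\sigma)^{1/\beta}}\, d\sigma
= \frac{\sqrt\pi\, \mathcal E(s_0)^{1-1/\beta}}{1-1/\beta} < \infty,
\]
using $\beta>1$ and $\mathcal E(\infty) = 0$. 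By Minkowski's integral inequality the slices $\gamma_s$ are Cauchy as $s\to\infty$ in the $L^2$-distance $d_{L^2}(\gamma,\gamma') = (\int_0^\pi d_g(\gamma(t),\gamma'(t))^2 dt)^{1/2}$, hence converge, $(M,g)$ being complete, to an $L^2$-path which, since $\operatorname{osc}_t \gamma_s\to 0$, is a constant $p$. Picking for each $s$ a point $t_0$ with $d_g(\gamma_s(t_0),p)^2 \leq \pi^{-1} d_{L^2}(\gamma_s,p)^2$, we get $d_g(\gamma_s(t),p) \leq \operatorname{osc}_t\gamma_s + d_g(\gamma_s(t_0),p)\to 0$ uniformly in $t$; thus $u(z)\to p$ as $z\to 0$, and $\bar u$ is continuous. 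Theorem~\ref{tm:conti} then follows by covering $\overline\Sigma\setminus\Sigma$ by finitely many such half-disk charts and transporting the strip estimate back through the conformal identifications.

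I expect the main obstacle to lie not in any single estimate but in organizing the argument without circularity: establishing, before convergence is known, that the tail of $u$ is trapped in a fixed compact neighborhood of $L_0\cap L_1$ so that the hypotheses of Proposition~\ref{pr:lsa} and the uniform $\epsilon$-regularity estimates are available (this is where the distance hypothesis enters), and then upgrading the $L^2$-convergence of the slices to uniform convergence via the control function $\mathcal E^{1-1/\beta}$ in the regime of merely polynomial energy decay.
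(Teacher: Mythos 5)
Your proposal is correct and follows essentially the same route as the paper: pass to the half-strip, derive the isoperimetric bound $\mathcal{E}(s)\leq\ell_g(\gamma_s)^\beta$ from Proposition~\ref{pr:lsa} and Theorem~\ref{tm:lii}, integrate the resulting differential inequality for polynomial energy decay, feed this into the mean value inequality for the pointwise gradient bound, and use the \L ojasiewicz control function $\mathcal{E}^{1-1/\beta}$ to make the slices Cauchy (the paper's Lemmas~\ref{lm:combo}--\ref{lm:limit}). The only deviations are cosmetic: you obtain the isoperimetric bound by letting $s'\to\infty$ rather than the paper's $\delta\to 0$ limit, and you phrase the Cauchy argument via $L^2$-distance and Minkowski's integral inequality where the paper selects a good horizontal line by Tonelli; both variants are sound.
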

The proof of Theorem~\ref{tm:limit} relies on the following discussion and will be given at the end of the section. It combines ideas in the proof of~\cite[Theorem 4.1.2]{_MS_} with results from Sections~\ref{sec:lsa} and~\ref{sec:lii}.

The almost complex structure $\Theta$ is called $\omega$-\textbf{compatible} if for all tangent vectors $\xi,\eta \in T_pM$ we have $\omega(\Theta\xi,\Theta\eta) = \omega(\xi,\eta).$ The following lemma is of fundamental importance in Gromov's theory of $J$-holomorphic curves~\cite{_Gromov:curves_}. See~\cite[Lemma 2.2.1]{_MS_}.
\begin{lm}\label{lm:ei}
Let $\Sigma$ be a Riemann surface. Every $\Theta$-holomorphic map $u: \Sigma \to M$ satisfies
\[
E_g(u) = \int_\Sigma u^*\omega.
\]
If $\Theta$ is $\omega$-compatible, then for any smooth map $u: \Sigma\to M,$ we have
\[
E_g(u) := \int_\Sigma |du|^2 \dvol = \int_\Sigma |\bar\partial_\Theta u|^2 \dvol + \int u^*\omega.
\]
In fact, this follows from a pointwise equality of the integrands.
\end{lm}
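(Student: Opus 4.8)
As the lemma itself signals, everything is pointwise, so I would reduce to comparing density $2$-forms on $\Sigma$ in a single holomorphic coordinate chart. Fix a holomorphic coordinate $z = s + \sqrt{-1}\,t$ on a patch of $\Sigma$. Each of the three quantities $|du|^2_g\,\dvol$, $u^*\omega$, and $|\bar\partial_\Theta u|^2_g\,\dvol$ is conformally invariant in real dimension two (the first and last because $|\cdot|^2\dvol$ of a $1$-form on a surface is; the middle because $u^*\omega$ is already a $2$-form), so on the chart I may replace the Hermitian metric $h$ by the flat metric $ds^2 + dt^2$. It then suffices to identify the coefficients of $ds\wedge dt$; this is precisely the local computation of~\cite[Lemma 2.2.1]{_MS_}, which I reproduce in the present conventions.

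\textbf{Main steps.} With $j\partial_s = \partial_t$, $j\partial_t = -\partial_s$, one has $\bar\partial_\Theta u(\partial_s) = \Theta\,\partial_s u + \partial_t u =: \zeta$ and $\bar\partial_\Theta u(\partial_t) = \Theta\,\partial_t u - \partial_s u = \Theta\zeta$, so $\bar\partial_\Theta u$ is determined by the single section $\zeta$ of $u^*TM$, and with the standard normalization of the fibre metric on $\Lambda^{0,1}T^*\Sigma\otimes u^*TM$ one has $|\bar\partial_\Theta u|^2_g = |\zeta|^2_g$; in particular $u$ is $\Theta$-holomorphic iff $\partial_t u = -\Theta\,\partial_s u$. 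Next I would record two algebraic facts about $g_\Theta$. From the antisymmetry of $\omega$ alone, $\Theta$ is a $g_\Theta$-isometry:
\[
g_\Theta(\Theta\xi,\Theta\eta)=\tfrac12\bigl(\omega(\Theta\xi,\Theta^2\eta)+\omega(\Theta\eta,\Theta^2\xi)\bigr)=\tfrac12\bigl(\omega(\eta,\Theta\xi)+\omega(\xi,\Theta\eta)\bigr)=g_\Theta(\xi,\eta).
\]
If moreover $\Theta$ is $\omega$-compatible, then $g_\Theta(\Theta\xi,\eta)=\tfrac12\bigl(\omega(\Theta\xi,\Theta\eta)+\omega(\eta,\Theta^2\xi)\bigr)=\omega(\xi,\eta)$, so that $(M,\Theta,g_\Theta,\omega)$ is almost Hermitian with fundamental form $\omega$. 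Finally, using the isometry property,
\[
|\zeta|^2_g=\bigl|\partial_t u+\Theta\,\partial_s u\bigr|^2_g=|\partial_t u|^2_g+|\partial_s u|^2_g+2\,g_\Theta\bigl(\Theta\,\partial_s u,\partial_t u\bigr)=|du|^2_g+2\,g_\Theta\bigl(\Theta\,\partial_s u,\partial_t u\bigr),
\]
since $\dvol = ds\wedge dt$ gives $|du|^2_g=|\partial_s u|^2_g+|\partial_t u|^2_g$.

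\textbf{Conclusion and obstacle.} For the first assertion one sets $\bar\partial_\Theta u=0$, i.e. $\zeta=0$; the displayed identity then forces $|du|^2_g$ to equal (a fixed multiple of) $g_\Theta(\Theta\,\partial_s u,\partial_s u)=\omega(\partial_s u,\partial_t u)$, which is the coefficient of $u^*\omega$ — and note this step uses only $\omega$-tameness, not compatibility. For the second assertion, $\omega$-compatibility and the second identity of the previous step turn the cross term $g_\Theta(\Theta\,\partial_s u,\partial_t u)$ into $\omega(\partial_s u,\partial_t u)=(u^*\omega)(\partial_s,\partial_t)$, yielding the pointwise equality of integrands $|du|^2_g\,\dvol = |\bar\partial_\Theta u|^2_g\,\dvol + u^*\omega$ (equivalently its integrated form $E_g(u)=\int_\Sigma|\bar\partial_\Theta u|^2\dvol+\int u^*\omega$); the first assertion is then its special case $\bar\partial_\Theta u\equiv 0$. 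There is no real obstacle here — it is a routine calculation — the only thing requiring care is the bookkeeping of normalization constants and signs (the precise factor in $|du|^2$ and in the fibre metric on $(0,1)$-forms, and the sign conventions for $j$ and $\bar\partial_\Theta$), for which I would simply align with~\cite[Lemma 2.2.1]{_MS_}.
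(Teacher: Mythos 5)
Your proposal is correct and is essentially the paper's own approach: the paper disposes of this lemma simply by citing \cite[Lemma 2.2.1]{_MS_}, and your chart-by-chart pointwise computation (conformal invariance to reduce to the flat metric, the identity $|\zeta|^2_g=|du|^2_g+2g_\Theta(\Theta\partial_s u,\partial_t u)$, tameness sufficing for the holomorphic case and compatibility converting the cross term into $u^*\omega$) is exactly that standard argument. One cosmetic slip: in the holomorphic case the identity you want is $|du|^2_g=2g_\Theta(\partial_s u,\partial_s u)=2\,\omega(\partial_s u,\Theta\partial_s u)=\pm 2\,(u^*\omega)(\partial_s,\partial_t)$, not ``$g_\Theta(\Theta\partial_s u,\partial_s u)=\omega(\partial_s u,\partial_t u)$'' (the left side vanishes identically by antisymmetry of $\omega$); the residual sign and factor-of-two ambiguities you defer to \cite{_MS_} are genuinely present in the paper's own conventions (compare the statement of the lemma with the formula $E=\int u^*\omega=\tfrac12\int|du|^2_{g_\Theta}\dvol$ used in Section~\ref{ssec:gf}), so aligning with \cite[Lemma 2.2.1]{_MS_} is the right call.
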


Let
\[
D_r = \{z \in \C| |z| \leq r\}, \qquad H_r = \{z \in D_r | \Im z \geq 0\},
\]
denote the disk of radius $r$ and the upper half-disk of radius $r$ respectively. When it does not cause confusion, we abbreviate $|\cdot | = |\cdot|_g.$ When not indicated otherwise, integrals over subsets of $\C$ are with respect to the volume form of the Euclidean metric on $\C.$
We will use the following mean value inequality from~\cite[Lemma~4.3.1]{_MS_}.
\begin{lm}\label{lm:mvi}
Let $L \subset M$ be a totally real submanifold with respect to~$\Theta.$ There are constants $c_0,\delta_2>0$ such that the following holds.
If $r> 0$ and $u: (H_{2r}, H_{2r}\cap\R) \to (M,L)$ is $\Theta$-holomorphic, then
\[
\int_{H_{2r}} |du|^2 < \delta_2 \qquad \Rightarrow \qquad \sup_{z \in H_r}|du(z)|^2 \leq \frac{c_0}{r^2}\int_{H_{2r}} |du|^2.
\]
\end{lm}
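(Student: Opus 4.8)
The plan is to follow the strategy of McDuff--Salamon \cite{_MS_}: establish a Weitzenb\"ock-type differential inequality for the energy density, handle the boundary condition $u(H_{2r}\cap\R)\subset L$ by reflecting across $L$, and conclude with a maximum-principle argument. First I would use the conformal invariance of the two-dimensional Dirichlet energy to reduce to the case $r=1$: the rescaling $z\mapsto rz$ carries $H_{2r}$ to $H_2$, preserves $\bar\partial_\Theta$-holomorphicity, and for $v(z):=u(rz)$ satisfies $\int_{H_2}|dv|^2=\int_{H_{2r}}|du|^2$ and $\sup_{H_1}|dv|^2=r^2\sup_{H_r}|du|^2$; the density $e:=|dv|^2$ scales like $r^2$, so the inequality $\Delta e\ge -Ae^2$ used below is preserved verbatim. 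Hence it suffices to produce $c_0,\delta_2>0$ such that $\int_{H_2}|dv|^2<\delta_2$ implies $\sup_{H_1}|dv|^2\le c_0\int_{H_2}|dv|^2$.

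The interior input is the inequality $\Delta e\ge -A\,e^2$ for the energy density $e=|dv|^2_g$ of a $\Theta$-holomorphic curve, with $A$ depending only on bounds for the curvature of $g$ and for $\nabla\Theta$ on a neighborhood of $L$. I would obtain it by differentiating $\bar\partial_\Theta v=0$ once more and applying the Weitzenb\"ock formula: the resulting identity for $\Delta e$ has a nonnegative term $|\nabla dv|^2$, a curvature term quartic in $dv$ and hence bounded below by $-Ce^2$, and torsion terms involving $\nabla\Theta$ and $\nabla^2\Theta$ which are absorbed into $|\nabla dv|^2+Ce^2$ by the Cauchy--Schwarz inequality (compare \cite{_MS_}).

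To handle the boundary I would pick a coordinate chart straightening $L$ to $\R^n\subset\C^n$ and, by a further coordinate change supported near $\R^n$, arrange that $\Theta$ agrees with the standard complex structure $J_{\mathrm{std}}$ along $\R^n$. Since $J_{\mathrm{std}}$ anti-commutes with complex conjugation $\sigma(z)=\bar z$, the map $\tilde v$ equal to $v$ on the upper half-disk and to $z\mapsto\overline{v(\bar z)}$ on the lower half-disk is $\tilde\Theta$-holomorphic on the full disk $D_2$, where $\tilde\Theta$ is the Lipschitz almost complex structure obtained by reflecting $\Theta$ across $\R^n$. Elliptic regularity for $\bar\partial_{\tilde\Theta}$ with Lipschitz coefficients gives $\tilde v\in W^{2,p}_{\mathrm{loc}}\subset C^{1,\alpha}_{\mathrm{loc}}$, so $e_{\tilde v}=|d\tilde v|^2$ is a well-defined continuous function, and after smoothing $\tilde\Theta$ (or reading the inequality distributionally) the estimate $\Delta e_{\tilde v}\ge -Ae_{\tilde v}^2$ holds across the seam $\R^n$. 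Alternatively one may avoid reflection, keep working on the half-disk, and derive from the totally-real hypothesis a one-sided boundary inequality $\partial_\nu e\ge -c\,e^{3/2}$ along $H_2\cap\R$, running the argument below in the half-disk instead.

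Finally I would invoke the mean-value inequality for subsolutions (the argument of Heinz; see \cite[Lemma~4.3.2]{_MS_}): a nonnegative continuous $w$ on $D_2$ with $\Delta w\ge -Aw^2$ and $\int_{D_2}w$ below a threshold $\hbar(A)$ satisfies $\sup_{D_1}w\le c(A)\int_{D_2}w$; this is proved by maximizing $s\mapsto(1-s)^2\sup_{\{|z-z_0|\le s\}}w$, applying the sub-mean-value property of the subharmonic function $w+\tfrac{\mu}{4}|\cdot-z_0|^2$ on a small ball around the maximizing center, and closing the estimate using the smallness of $\int w$ together with a continuation argument in the scaling parameter. Applying this with $w=e_{\tilde v}$ and $\delta_2=\hbar/2$, so that $\int_{D_2}e_{\tilde v}=2\int_{H_2}|dv|^2<\hbar$, yields $\sup_{H_1}|dv|^2\le\sup_{D_1}e_{\tilde v}\le 2c\int_{H_2}|dv|^2$, and undoing the rescaling gives the lemma with $c_0=2c$. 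I expect the main obstacle to be the interface in the boundary step: when $\Theta$ is not integrable $\tilde\Theta$ is only Lipschitz and $\tilde v$ only $C^{1,\alpha}$, so one must argue carefully---by smoothing, or distributionally, or via the one-sided boundary inequality---that the second-order estimate survives the seam; the remaining points, namely the uniformity of $A$ (bounded geometry near $L$) and confining $u(H_{2r})$ to a single adapted chart (achievable after shrinking $\delta_2$ by a continuation argument), are routine.
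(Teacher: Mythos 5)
The paper does not prove this lemma at all; it is quoted verbatim from McDuff--Salamon \cite[Lemma 4.3.1]{_MS_}, and your outline---conformal rescaling to $r=1$, the differential inequality $\Delta e\ge -Ae^2$ for the energy density, boundary control coming from the totally real condition, and the Heinz-type mean value inequality of \cite[Lemma 4.3.2]{_MS_}---is exactly the argument given there, with your non-reflection alternative (the one-sided inequality $\partial_\nu e\ge -c\,e^{3/2}$ on the flat boundary, fed into the half-disk version of the mean value inequality) being precisely the route McDuff--Salamon take. The Schwarz-reflection variant you lead with is also workable, but beyond the Lipschitz seam you already flag, it requires confining the image of a fixed-size boundary half-disk to a single adapted chart with constants independent of $u$ (small energy alone does not give such diameter control, and shrinking $\delta_2$ does not obviously fix this), a uniformity issue the pointwise boundary inequality avoids entirely---so your stated alternative is the cleaner and fully correct path.
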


For $\tau \in \R,$ abbreviate
\begin{gather*}
S_\tau = \{x + \sqrt{-1}y|x \in (-\infty,\tau], \; y \in [0,1]\} \subset \C, \\
\partial_i S_\tau = \{x + \sqrt{-1}y \in S_\tau| x = i\}, \qquad i = 0,1.
\end{gather*}
In the following lemmas, we consider a $\Theta$-holomorphic map
\[
v : S_0 \to M, \qquad u(\partial_i S_0) \subset L_i, \quad i = 0,1,
\]
with
\[
E_g(v) = \frac{1}{2} \int_{S_0} |dv|^2 < \infty.
\]
We abbreviate $v(x,y) = v(x + \sqrt{-1}y).$ For $x \in (-\infty,0],$ let $v_x : [0,1] \to M$ denote the path given by
\[
v_x(y) = v(x,y).
\]
Define $\e: (-\infty,0] \to \R_{\geq 0}$ by
\[
\e(\tau) = \frac{1}{2}\int_{S_\tau} |dv|^2.
\]
The proof of the following lemma uses ideas from~\cite[Lemma 4.5.1]{_MS_}
\begin{lm}\label{lm:combo}
Let $\beta$ be as in Theorem~\ref{tm:lii} and let $\delta_2$ be as in Lemma~\ref{lm:mvi}. There exists $\tau_0 \in (-\infty, 0]$ such that
\[
\tau \in (-\infty, \tau_0]\qquad  \Rightarrow \qquad \e(\tau) \leq \ell_g(v_\tau)^\beta, \quad \e(\tau) \leq \delta_2.
\]
\end{lm}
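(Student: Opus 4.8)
The plan is to show that both conclusions hold for $\tau$ sufficiently negative by exploiting the finiteness of the energy $E_g(v)$. Since $E_g(v) < \infty$, the tail energy $\e(\tau) = \frac12\int_{S_\tau}|dv|^2$ decreases monotonically to $0$ as $\tau \to -\infty$. Hence there is immediately some $\tau_1$ with $\e(\tau) \leq \delta_2$ for $\tau \leq \tau_1$; this dispatches the second inequality. The content is in the first inequality $\e(\tau) \leq \ell_g(v_\tau)^\beta$, which requires relating the two-dimensional tail energy to the one-dimensional length of the slice path $v_\tau$.

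The key step is an isoperimetric-type comparison. First I would observe that since $v(\partial_i S_0) \subset L_i$, the restriction $v|_{S_\tau}$ is a map of the strip with the two horizontal boundary edges mapping into $L_0$ and $L_1$; cutting off a single slice $v_\tau : [0,1] \to M$ (which joins $L_0$ to $L_1$, i.e. $v_\tau(0) \in L_0$, $v_\tau(1) \in L_1$) and filling the vertical edge $\partial_0 S_\tau = \{\tau\}\times[0,1]$ via the half-disk $v_{v_\tau}$ from Theorem~\ref{tm:vg}, one gets that $\int_{S_\tau} v^*\omega$ equals $a(v_\tau)$ — this is precisely Proposition~\ref{pr:lsa} applied to $\kappa = v|_{[a,b]\times[0,1]}$ as $a \to -\infty$, once we know that $v_s$ stays short for all sufficiently negative $s$. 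By Lemma~\ref{lm:ei}, $\int_{S_\tau} v^*\omega = 2\e(\tau)$, so $\e(\tau) = \tfrac12 a(v_\tau)$ up to a sign convention. Then Theorem~\ref{tm:lii} gives $|a(v_\tau)| \leq \ell_g(v_\tau)^\beta$ once $\ell_g(v_\tau) < \delta_1$, which yields the desired bound (possibly with the constant absorbed by shrinking further, exactly as in the proof of Theorem~\ref{tm:lii}).

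So the remaining task — and the main obstacle — is to guarantee that $\ell_g(v_\tau) \to 0$ as $\tau \to -\infty$, so that both the hypothesis of Theorem~\ref{tm:lii} and the hypothesis of Proposition~\ref{pr:lsa} are met for $\tau$ below some threshold. Here I would combine the mean value inequality of Lemma~\ref{lm:mvi} with the energy decay. Once $\e(\tau) \leq \delta_2$, Lemma~\ref{lm:mvi} applied to suitable half-disks (or the analogous interior/boundary estimate for the strip) bounds $\sup|dv|$ on the slice near $x = \tau$ by a constant times the local energy, which is at most $\e(\tau)$ and hence tends to $0$. Integrating $|dv_\tau| = |dv(\partial_y)| \leq \sup|dv|$ over $y \in [0,1]$ gives $\ell_g(v_\tau) \to 0$. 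Thus we may pick $\tau_0 \leq \tau_1$ small enough that simultaneously $\e(\tau) \leq \delta_2$, $\ell_g(v_\tau) < \delta_1$ for all $\tau \leq \tau_0$, and $\ell_g(v_s) < \min\{\delta_0(\epsilon_0),\, \text{length needed for Prop.~\ref{pr:lsa}}\}$ for all $s \leq \tau_0$; then for $\tau \leq \tau_0$ the chain of equalities and inequalities above delivers $\e(\tau) \leq \ell_g(v_\tau)^\beta$. The only subtlety to be careful about is that Lemma~\ref{lm:mvi} is stated for half-disks with boundary on a single totally real $L$, so near the interior of the strip one uses the interior mean value inequality and near each horizontal edge one uses the boundary version with $L = L_0$ or $L_1$ respectively; covering the slice $\{\tau\}\times[0,1]$ by finitely many such regions of fixed size suffices.
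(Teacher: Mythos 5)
Your proposal is correct and follows essentially the same route as the paper: finiteness of energy plus the mean value inequality (Lemma~\ref{lm:mvi}) force $\ell_g(v_\tau)\to 0$, Proposition~\ref{pr:lsa} identifies the tail energy with a difference of local symplectic actions, and Theorem~\ref{tm:lii} bounds the actions by $\ell_g(v_\tau)^\beta$ (the paper keeps an explicit error term $\delta^\beta$ and lets $\delta\to 0$ rather than passing to the limit $a\to-\infty$, but this is the same argument). Your extra care about applying Lemma~\ref{lm:mvi} on a strip with two boundary components, and the harmless factor of $2$ coming from the normalization of $E_g$ versus $\int v^*\omega$, do not affect the conclusion.
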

\begin{proof}
Let $\delta_0(\epsilon_0)$ be as in Proposition~\ref{pr:lsa}, let $\delta_1$ be as in Theorem~\ref{tm:lii} and let $\delta_2,c_0,$ be as in Lemma~\ref{lm:mvi}. Abbreviate
\[
\delta_3 = \min \{\delta_0(\epsilon_0),\delta_1\},
\]
and let $\delta$ satisfy
\[
0 < \delta \leq \delta_3.
\]
Observe that $\e$ is a continuous function with
\[
\lim_{\tau \to -\infty} \e(\tau) = 0.
\]
So, choose $\tau_\delta$ such that for $\tau \leq \tau_\delta+1$ we have
\[
\e(\tau) \leq \min\left\{\delta_2,\frac{\delta^2}{2c_0}\right\}
\]
Lemma~\ref{lm:mvi} with $r = \frac{1}{2}$ gives
\[
|dv(z)| \leq \sqrt{2}\delta, \qquad z \in S_{\tau_\delta}.
\]
Since $v$ is $\Theta$ holomorphic, it follows that
\[
|\dot v_\tau(t)| \leq \delta, \qquad \tau \leq \tau_\delta, \; t \in [0,1],
\]
and consequently,
\[
\ell_g(v_\tau) = \int_0^1 |\dot v_\tau(t)|dt \leq \delta, \qquad \tau \leq \tau_\delta.
\]
Let $\tau_0 = \tau_{\delta_3}$ and let $\tau < \tau_0.$ Then, Lemma~\ref{lm:ei},
Proposition~\ref{pr:lsa} with $\kappa = v|_{[\tau_\delta,\tau]}$ and Theorem~\ref{tm:lii}, give
\[
\frac{1}{2}\int_{[\tau_\delta,\tau]} |dv|^2 = \int_{[\tau_\delta,\tau]\times [0,1]} v^*\omega = a(v_\tau) - a(v_{\tau_\delta}) \leq \ell_g(v_\tau)^\beta + \delta^\beta.
\]
Since $\delta$ can be taken arbitrarily small, and we may assume without loss of generality that $\tau_\delta \to -\infty$ as $\delta \to 0,$ the lemma follows.
\end{proof}

For a smooth path $\gamma : [0,1] \to M,$ the energy is given by
\[
E(\gamma) = \int_0^1 |\dot \gamma(t)|^2 dt.
\]
\begin{lm}\label{lm:edot}
We have
\[
\dot \e(\tau) = E(v_\tau).
\]
\end{lm}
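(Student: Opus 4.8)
The plan is to differentiate the function $\e(\tau) = \frac{1}{2}\int_{S_\tau} |dv|^2$ directly, exploiting the product structure $S_\tau = (-\infty,\tau]\times[0,1]$. Writing the area integral as an iterated integral in the $x$ then $y$ variables, we have
\[
\e(\tau) = \frac{1}{2}\int_{-\infty}^\tau\!\!\int_0^1 |dv(x,y)|^2\,dy\,dx,
\]
so by the fundamental theorem of calculus, the derivative with respect to the upper limit is simply $\dot\e(\tau) = \frac{1}{2}\int_0^1 |dv(\tau,y)|^2\,dy$. The task then reduces to identifying this slice integral with $E(v_\tau) = \int_0^1 |\dot v_\tau(y)|^2\,dy = \int_0^1 |\partial_y v(\tau,y)|^2\,dy$.

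The key point is that for a $\Theta$-holomorphic map, $|dv|^2 = |\partial_x v|^2 + |\partial_y v|^2$ pointwise with respect to $g = g_\Theta$, and moreover $\bar\partial_\Theta v = 0$ forces $\partial_x v = -\Theta\,\partial_y v$; since $g_\Theta$ is $\Theta$-invariant (this is immediate from the formula $g_\Theta(\eta,\xi) = (\omega(\eta,\Theta\xi)+\omega(\xi,\Theta\eta))/2$ together with $\omega$-tameness), we get $|\partial_x v|_g = |\partial_y v|_g$. Hence $|dv|^2 = 2|\partial_y v|^2$, and therefore $\frac{1}{2}\int_0^1|dv(\tau,y)|^2\,dy = \int_0^1|\partial_y v(\tau,y)|^2\,dy = E(v_\tau)$, which is exactly the claimed identity. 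I would spell out the pointwise computation of $|dv|^2$ in the $\Theta$-holomorphic case, perhaps citing Lemma~\ref{lm:ei} or the standard reference~\cite[Lemma 2.2.1]{_MS_} for the decomposition.

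The only genuine subtlety is justifying differentiation under the integral sign / the fundamental theorem of calculus step, i.e. that $\tau\mapsto\e(\tau)$ is differentiable with the stated derivative. This follows because $v$ is smooth (elliptic regularity for $\Theta$-holomorphic maps, with boundary regularity at $\partial_i S_0$ since the $L_i$ are totally real), so the integrand $|dv|^2$ restricted to any compact portion of $S_0$ is continuous, making $\tau\mapsto\int_0^1|dv(\tau,y)|^2\,dy$ continuous; finiteness of $E_g(v)$ guarantees the improper integral defining $\e$ converges, and on $(-\infty,\tau_0]$ the slices are uniformly small by Lemma~\ref{lm:combo}, so no integrability issue arises. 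I expect this regularity/FTC bookkeeping to be the main obstacle only in the sense of being the part requiring care; the algebraic identity itself is routine. A clean write-up would first record the pointwise equality $|dv|^2 = 2|\partial_y v|^2$, then invoke continuity of the integrand to conclude $\dot\e(\tau) = \frac12\int_0^1 |dv(\tau,y)|^2\,dy = E(v_\tau)$.
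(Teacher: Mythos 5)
Your proposal is correct and follows essentially the same route as the paper: the paper likewise uses the pointwise identity $\tfrac12|dv|^2=|\partial_y v|^2$ coming from $\Theta$-holomorphicity, then Fubini to write $\e(\tau)=\int_{-\infty}^{\tau}E(v_x)\,dx$, with the fundamental theorem of calculus left implicit. Your extra care about the FTC/regularity step is fine but not a point of divergence.
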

\begin{proof}
Using the fact that $v$ is $\Theta$ holomorphic and Fubini's theorem, we obtain
\begin{multline*}
\e(\tau) = \frac{1}{2}\bigintsss_{S_\tau} \left(\left| \frac{\partial v}{\partial x}\right|^2 + \left|\frac{\partial v}{\partial y}\right|^2\right) = \\
= \bigintssss_{S_\tau} \left|\frac{\partial v}{\partial y}\right|^2  =  \bigintssss_{-\infty}^\tau\left(\bigintssss_0^1 \left|\frac{\partial v}{\partial y}\left(x,y\right)\right|^2 dy\right)dx = \int_{-\infty}^\tau E(v_x)dx.
\end{multline*}
\end{proof}
The proof of the following lemma uses ideas from~\cite[Theorem~ 4.1.2]{_MS_}.
\begin{lm}\label{lm:decay}
With $\tau_0$ as in Lemma~\ref{lm:combo}, there exist constants  $c_1,\alpha_1 > 0,$ such that
\[
\e(\tau) \leq  \frac{c_1}{(\tau_0-\tau)^{\alpha_1}}, \qquad \tau \in (-\infty,\tau_0].
\]
\end{lm}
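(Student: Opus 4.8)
The plan is to derive a differential inequality for the energy function $\e$ and integrate it. By Lemma~\ref{lm:combo}, for $\tau \leq \tau_0$ we have $\e(\tau) \leq \ell_g(v_\tau)^\beta$ with $\beta > 1$. Now I relate $\ell_g(v_\tau)$ to $E(v_\tau)$: by Cauchy--Schwarz, $\ell_g(v_\tau)^2 = \left(\int_0^1 |\dot v_\tau(t)|\,dt\right)^2 \leq \int_0^1 |\dot v_\tau(t)|^2\,dt = E(v_\tau)$. Combining with Lemma~\ref{lm:edot}, which gives $\dot\e(\tau) = E(v_\tau)$, we obtain
\[
\e(\tau)^{2/\beta} \leq \ell_g(v_\tau)^2 \leq E(v_\tau) = \dot\e(\tau), \qquad \tau \leq \tau_0.
\]
Since $\beta > 1$, the exponent $2/\beta$ satisfies $2/\beta > 1$ precisely when $\beta < 2$; in general $2/\beta$ may be below or above $1$, but what matters is that we get an autonomous differential inequality $\dot\e \geq \e^{p}$ with $p = 2/\beta > 0$. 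If $p > 1$ this already forces $\e$ to decay like a negative power of $(\tau_0 - \tau)$; if $p \leq 1$ one must be slightly more careful, but note that we are free to shrink $\tau_0$ and we may also first observe $\e$ is nonincreasing in a neighborhood of $-\infty$ — wait, actually $\e$ is nondecreasing since $\dot\e = E(v_\tau) \geq 0$. The point is that $\e$ is small and monotone, so without loss of generality $\e(\tau) \leq 1$ for $\tau \leq \tau_0$, hence $\e^{p} \geq \e^{p'}$ for any $p' \geq p$; in particular we may assume the relevant exponent is whatever is convenient, and the cleanest is to just run the argument with $p = 2/\beta$ directly.

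The key step is the integration. If $\e(\tau) > 0$ for some $\tau < \tau_0$ (if $\e \equiv 0$ near $-\infty$ the bound is trivial), then $\e > 0$ on $[\tau,\tau_0]$ by monotonicity, and from $\dot\e \geq \e^{2/\beta}$ we get $\frac{d}{d\sigma}\!\left(\e(\sigma)^{1-2/\beta}\right) = (1 - 2/\beta)\e(\sigma)^{-2/\beta}\dot\e(\sigma)$. When $2/\beta \neq 1$ this is $\leq -( 2/\beta - 1)$ if $2/\beta > 1$, giving upon integration from $\tau$ to $\tau_0$ that $\e(\tau)^{1-2/\beta} \geq \e(\tau_0)^{1-2/\beta} + (2/\beta - 1)(\tau_0 - \tau)$, hence
\[
\e(\tau) \leq \left((2/\beta - 1)(\tau_0 - \tau)\right)^{-\frac{1}{2/\beta - 1}} = \frac{c_1}{(\tau_0 - \tau)^{\alpha_1}}
\]
with $\alpha_1 = \frac{1}{2/\beta - 1} = \frac{\beta}{2-\beta}$ and $c_1 = (2/\beta-1)^{-\alpha_1}$, valid for $\tau \leq \tau_0$. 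In the borderline case $\beta = 2$ one gets exponential decay, which is stronger, and in the case $\beta > 2$ the exponent $2/\beta < 1$ forces one instead to use the trivial bound that $\e$ is bounded — but here one should simply replace $\beta$ by $\min\{\beta, 3/2\}$ at the outset (shrinking $\delta_1$ in Theorem~\ref{tm:lii} accordingly is harmless), so that $1 < \beta < 2$ and the displayed computation applies cleanly.

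The main obstacle — really the only subtlety — is handling the exponent $2/\beta$: a priori Theorem~\ref{tm:lii} only guarantees $\beta > 1$, and the differential-inequality integration above produces a genuine power-law decay only when $2/\beta > 1$, i.e. $\beta < 2$. The resolution is to note that inequality~\eqref{eq:isoper}, $|a(\gamma)| \leq \ell_g(\gamma)^\beta$, remains true if $\beta$ is replaced by any smaller exponent $> 1$ once $\ell_g(\gamma) < 1$; so we lose nothing by assuming $1 < \beta < 2$ throughout, and then $\alpha_1 = \beta/(2-\beta) > 0$ works. Everything else — the continuity and monotonicity of $\e$, the Cauchy--Schwarz step, and the elementary ODE comparison — is routine.
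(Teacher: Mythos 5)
Your proposal is correct and follows essentially the same route as the paper: both derive the differential inequality $\dot\e(\tau)\geq\e(\tau)^{2/\beta}$ from Lemma~\ref{lm:combo}, Cauchy--Schwarz (the paper invokes H\"older) and Lemma~\ref{lm:edot}, reduce without loss of generality to $1<\beta<2$ (the paper phrases this as assuming $\theta=1/\beta>1/2$), and integrate from $\tau$ to $\tau_0$ to obtain the power-law decay with $\alpha_1=\beta/(2-\beta)$. Your explicit justification of the reduction to $\beta<2$ via $\ell_g(\gamma)<1$ is a welcome detail the paper leaves implicit.
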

\begin{proof}
Write $\theta = \frac{1}{\beta}$ and let $\tau < \tau_0.$ Without loss of generality, we may assume that $\theta > \frac{1}{2}.$ By Lemma~\ref{lm:combo}, Holder's inequality and Lemma~\ref{lm:edot}, we obtain
\[
\e(\tau)^{2\theta} \leq \ell_g(v_\tau)^2 \leq E(v_\tau) = \dot \e(\tau).
\]
Integrating this differential inequality from $\tau$ to $\tau_0$ gives
\[
(1-2\theta)(\tau_0-\tau) \geq \e(\tau_0)^{1-2\theta} - \e(\tau)^{1-2\theta}.
\]
So,
\[
\e(\tau)^{1-2\theta} \geq (2\theta - 1) (\tau_0 - \tau)
\]
and consequently,
\[
\e(\tau) \leq c_1(\tau_0 - \tau)^{\frac{-1}{(2\theta -1)}}
\]
with $c_1= (2\theta - 1)^{\frac{-1}{(2\theta -1)}}.$
\end{proof}

\begin{cy}\label{cy:decay}
With $\tau_0$ as in Lemma~\ref{lm:combo} and $\alpha_1$ as in Lemma~\ref{lm:decay}, there exists a constant $c_2 > 0$ such that
\[
\ell_g(v_\tau) \leq \frac{c_2}{(\tau_0 - \tau-1)^{\frac{\alpha_1}{2}}}, \qquad \tau \in (-\infty,\tau_0 - 1].
\]
Moreover,
\[
|dv(\tau,t)| \leq \frac{c_2}{(\tau_0 - \tau-1)^{\frac{\alpha_1}{2}}}, \qquad \tau \in (-\infty,\tau_0 - 1], \quad t \in [0,1].
\]
\end{cy}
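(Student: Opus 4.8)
The plan is to deduce the two inequalities directly from the decay of the local energy $\e$ established in Lemma~\ref{lm:decay}, together with the mean value inequality of Lemma~\ref{lm:mvi}. First I would prove the pointwise bound on $|dv|$ and then integrate it over $\{\tau\}\times[0,1]$ to recover the length bound. Fix $\tau \leq \tau_0 - 1$. The strip $S_\tau$ sits inside $S_{\tau_0}$, so by Lemma~\ref{lm:combo} we have $\e(\sigma) \leq \delta_2$ for all $\sigma \leq \tau_0$, and in particular $\frac{1}{2}\int_{S_{\tau}} |dv|^2 = \e(\tau) \leq \delta_2$. To apply the interior estimate I would translate: around the point $(\tau, t) \in S_\tau$, for $t \in [0,1]$, consider a half-disk of radius $r = 1$ centered at $(\tau, t')$ with $t'$ chosen so that the half-disk $H_2$ of radius $2$ about it lies in $S_{\tau+1} \subset S_{\tau_0}$; alternatively, since $S_\tau$ is a strip of width $1$, it is cleaner to cover a neighborhood of $\partial_i S_\tau$ by half-disks and the interior by full disks, applying Lemma~\ref{lm:mvi} near the Lagrangian boundary (with $L = L_i$, which is totally real with respect to $\Theta$) and the standard interior mean value inequality away from it. In either case the energy appearing on the right-hand side is $\int_{H_2} |dv|^2 \leq 2\e(\tau+1) \leq 2\e(\tau_0 - \tau' )$ for an appropriate $\tau' \geq \tau + 1$; conservatively we bound it by $2\e(\tau+1)$, and by Lemma~\ref{lm:decay}, $\e(\tau+1) \leq c_1/(\tau_0 - \tau - 1)^{\alpha_1}$.

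This gives
\[
|dv(\tau,t)|^2 \leq \frac{c_0}{r^2}\int_{H_{2r}} |dv|^2 \leq 2 c_0 c_1 (\tau_0 - \tau - 1)^{-\alpha_1},
\]
so taking square roots yields $|dv(\tau,t)| \leq c_2 (\tau_0-\tau-1)^{-\alpha_1/2}$ with $c_2 = \sqrt{2 c_0 c_1}$, which is the second assertion. For the first assertion, since $v$ is $\Theta$-holomorphic we have $|\dot v_\tau(t)| = |dv(\tau,t)|$ (the $y$-derivative along the strip has the same norm as $|dv|/\sqrt{2}$ or is controlled by $|dv|$), hence
\[
\ell_g(v_\tau) = \int_0^1 |\dot v_\tau(t)|\,dt \leq \int_0^1 |dv(\tau,t)|\,dt \leq \frac{c_2}{(\tau_0 - \tau - 1)^{\alpha_1/2}},
\]
using the pointwise bound just obtained and the fact that the interval has length $1$.

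The only mildly delicate point — and the one I would write out carefully — is the bookkeeping near the corners of the strip, where $\partial_0 S_\tau$ and $\partial_1 S_\tau$ meet the vertical segment $\{\Re z = \tau\}$: one must arrange the covering half-disks so that Lemma~\ref{lm:mvi} applies with the boundary lying in a single $L_i$, and so that all the half-disks of radius $2$ used stay inside $S_{\tau_0}$ (this is why the estimate is stated only for $\tau \leq \tau_0 - 1$, giving a full unit of room). Since the width of the strip is exactly $1$, a fixed finite number of half-disks of radius depending only on the geometry suffices, and the constant $c_0$ from Lemma~\ref{lm:mvi} together with $c_1,\alpha_1$ from Lemma~\ref{lm:decay} absorb everything into a single $c_2$. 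No new analytic input beyond Lemmas~\ref{lm:mvi}, \ref{lm:combo} and~\ref{lm:decay} is required.
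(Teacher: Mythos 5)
Your proposal is correct and follows essentially the same route as the paper: apply the boundary mean value inequality of Lemma~\ref{lm:mvi} on half-disks contained in $S_{\tau+1}$, bound the energy there by $\e(\tau+1)$ and invoke Lemma~\ref{lm:decay} to get the pointwise estimate on $|dv|$, then integrate over the unit-width segment (using $|\dot v_\tau| = \tfrac{1}{\sqrt2}|dv|$ for a holomorphic map) to obtain the length bound. The covering bookkeeping you flag is handled implicitly in the paper by taking $r=\tfrac12$, and affects only the constant $c_2$.
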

\begin{proof}
Let $t \in (\infty,\tau_0-1].$ Lemma~\ref{lm:mvi} with $r = \frac{1}{2}$ and Lemma~\ref{lm:decay} imply that
\[
|dv(\tau,t)| \leq \sqrt{4c_0\e(\tau+1)} \leq \frac{2\sqrt{c_0c_1}}{(\tau_0-\tau-1)^{\frac{\alpha_1}{2}}}.
\]
Since $|\dot v_\tau(t)| = \frac{1}{\sqrt{2}}|dv(\tau,t)|,$ we obtain
\[
\ell_g(v_\tau) = \int_0^1 |\dot v_\tau(t)| dt \leq \frac{\sqrt{2 c_0c_1}}{(\tau_0-\tau-1)^{\frac{\alpha_1}{2}}}.
\]
\end{proof}
The following lemma uses an idea of~\cite{_Loj84_}. See also~\cite{_KM_}.
\begin{lm}\label{lm:limit}
With $\tau_0$ as in Lemma~\ref{lm:combo}, there exists $p \in L_0 \cap L_1$ and constants $c_3,\alpha_2 > 0,$ such that
\[
d_g\!\left(v\left(\tau,t\right),p\right) \leq \frac{c_3}{(\tau_0 - \tau-1)^{\alpha_2}}, \qquad \tau \in (-\infty,\tau_0 - 1].
\]
\end{lm}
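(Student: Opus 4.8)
The plan is to deduce the Cauchy-type estimate $d_g(v(\tau,t),v(\tau',t')) \lesssim (\tau_0-\tau-1)^{-\alpha_2}$ for $\tau' \leq \tau$ from the length decay of the slices $v_\tau$ in Corollary~\ref{cy:decay}, and then pass to the limit as $\tau \to -\infty$ to produce the point $p$. First I would note that Corollary~\ref{cy:decay} controls the $g$-distance travelled along the "vertical" slice $\{\tau\}\times[0,1]$, namely $\ell_g(v_\tau) \leq c_2(\tau_0-\tau-1)^{-\alpha_1/2}$, and also bounds $|dv(\tau,t)|$ by the same quantity; so the only thing missing is horizontal control, i.e. how far $v(\tau,t)$ moves as $\tau$ decreases. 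For the horizontal displacement from $\tau$ to $\tau-1$ one writes $d_g(v(\tau,t),v(\tau-1,t)) \leq \int_{\tau-1}^\tau |dv(x,t)|\,dx \leq \sup_{x \in [\tau-1,\tau]} c_2(\tau_0-x-1)^{-\alpha_1/2} = c_2(\tau_0-\tau-1)^{-\alpha_1/2}$, using that $\tau_0-x-1$ is smallest at $x=\tau$.

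Next I would sum this over the integer steps $\tau, \tau-1, \tau-2, \ldots$ Writing $\tau_n$ for a point at distance $n$ below a fixed base level, the telescoping bound gives
\[
d_g(v(\tau_m,t),v(\tau_n,t)) \leq \sum_{k=m}^{n-1} \frac{c_2}{(\tau_0-\tau_k-1)^{\alpha_1/2}}.
\]
Since $\tau_0-\tau_k-1$ grows linearly in $k$, this series converges provided $\alpha_1/2 > 1$; if necessary one first replaces $\alpha_1$ by the larger exponent coming from iterating, or simply observes (as in the proof of Lemma~\ref{lm:decay}) that $\alpha_1$ may be taken as large as one likes at the cost of shrinking constants — in any case the \L ojasiewicz machinery can be rerun to guarantee summability. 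The partial sums therefore form a Cauchy sequence in the complete metric space $(M,g)$, so for each fixed $t$ the limit $\lim_{\tau \to -\infty} v(\tau,t)$ exists; the uniform vertical bound $\ell_g(v_\tau)\to 0$ shows the limit is independent of $t$, and we call it $p$. The tail estimate of the convergent series, together with one more application of the vertical bound, yields exactly
\[
d_g(v(\tau,t),p) \leq \frac{c_3}{(\tau_0-\tau-1)^{\alpha_2}}
\]
with $\alpha_2 = \alpha_1/2 - 1 > 0$ (or the appropriate exponent after the adjustment above), for a suitable constant $c_3$.

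Finally I must check $p \in L_0 \cap L_1$. For each integer $\tau$, the point $v(\tau,0)$ lies on $L_0$ and $v(\tau,1)$ lies on $L_1$ by the boundary conditions on $v$; since both $v(\tau,0)$ and $v(\tau,1)$ converge to $p$ and $L_0, L_1$ are closed (being submanifolds, hence closed in a neighborhood — or one uses that the $L_i$ are real analytic subsets, hence closed), we get $p \in \overline{L_0} \cap \overline{L_1} = L_0 \cap L_1$. The main obstacle I anticipate is purely bookkeeping: arranging the exponents so that the geometric-type series actually converges. The decay exponent $\alpha_1$ from Lemma~\ref{lm:decay} is not a priori bigger than $2$, so one has to either bootstrap the differential inequality in Lemma~\ref{lm:decay} to get a faster rate, or integrate the displacement bound more cleverly (e.g. against the non-integer variable directly rather than summing unit steps, turning $\sum (\tau_0-\tau_k-1)^{-\alpha_1/2}$ into $\int^{\tau_0-1} (\tau_0-x-1)^{-\alpha_1/2}\,dx$, which converges iff $\alpha_1/2 > 1$ as well). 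Either way the resolution is routine once one commits to tracking the exponents carefully, and everything else — completeness of $g$, closedness of the $L_i$, the two estimates of Corollary~\ref{cy:decay} — is already in hand.
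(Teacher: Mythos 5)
There is a genuine gap, and it is exactly at the point you flag as ``purely bookkeeping.'' The exponent $\alpha_1$ from Lemma~\ref{lm:decay} is $\alpha_1 = 1/(2\theta-1)$ with $\theta = 1/\beta$, and $\beta>1$ is handed to you by the \L ojasiewicz gradient inequality; you have no control over it. In the generic case $\beta$ is barely larger than $1$, so $\theta$ is close to $1$ and $\alpha_1$ is close to $1$; then $\int^{\tau_0-1}(\tau_0-x-1)^{-\alpha_1/2}\,dx$ diverges and your telescoping sum does not converge. Neither of your proposed fixes is available: increasing $\theta$ (the only direction in which the inequality $\e(\tau)^\theta \leq \ell_g(v_\tau)$ can be weakened ``for free,'' since $\e\leq 1$) makes $\alpha_1$ \emph{smaller}, and there is no bootstrap of the differential inequality $\e^{2\theta}\leq\dot\e$ that improves the polynomial rate --- the rate $(\tau_0-\tau)^{-1/(2\theta-1)}$ is sharp for that ODE. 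So the approach of integrating the pointwise bound $|dv|\lesssim(\tau_0-\tau-1)^{-\alpha_1/2}$ in the horizontal direction cannot be completed.

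The paper's resolution is a genuinely different idea, the \L ojasiewicz control-function argument from~\cite{_Loj84_}: one shows
\[
\frac{d}{d\tau}\,\e(\tau)^{1-\theta} \;=\; (1-\theta)\,\e(\tau)^{-\theta}E(v_\tau) \;\geq\; \frac{E(v_\tau)}{\ell_g(v_\tau)} \;\geq\; \ell_g(v_\tau),
\]
using Lemma~\ref{lm:combo} and H\"older. Since $v$ is $\Theta$-holomorphic, $|\partial v/\partial x| = |\partial v/\partial y|$, so $\int_{\tau'}^{\tau}\ell_g(v_x)\,dx = \int_0^1\bigl(\int_{\tau'}^{\tau}|\partial v/\partial x|\,dx\bigr)dy$ by Fubini, and hence for \emph{some} $y\in[0,1]$ the horizontal arc $x\mapsto v(x,y)$ has length at most $\e(\tau)^{1-\theta}-\e(\tau')^{1-\theta}$. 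This is automatically finite and decays like $(\tau_0-\tau)^{-(1-\theta)\alpha_1}$ by Lemma~\ref{lm:decay}, with no summability hypothesis needed; combining with $\ell_g(v_\tau)\to 0$ from Corollary~\ref{cy:decay} gives the Cauchy estimate, the limit point $p$, and the rate $\alpha_2=(1-\theta)\alpha_1$. Your final step ($p\in L_0\cap L_1$ by closedness of the $L_i$ and convergence of $v(\tau,0)$, $v(\tau,1)$) is fine, but the core convergence argument needs to be replaced.
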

\begin{proof}
Set $\theta = \frac{1}{\beta}$ and let $\tau \in (-\infty,\tau_0-1].$ By Lemma~\ref{lm:combo} we have
\[
\e(\tau)^{-\theta} \geq \frac{1}{\ell_g(v_{\tau})}.
\]
Thus, by Lemma~\ref{lm:edot} and Holder's inequality, we obtain
\begin{equation}\label{eq:cntl}
\frac{d}{d\tau} \e(\tau)^{1-\theta} = (1-\theta)\e(\tau)^{-\theta}E(v_\tau) \geq \frac{E(v_\tau)}{\ell_g(v_\tau)}\geq \ell_g(v_\tau).
\end{equation}
Let $\tau' \in (-\infty,\tau].$ Inequality~\eqref{eq:cntl}, the fact that $v$ is $\Theta$-holomorphic, and Tonelli's theorem give
\begin{multline*}
\e(\tau)^{1-\theta} - \e(\tau')^{1-\theta} \geq \int_{\tau'}^{\tau}\ell_g(v_x)dx = \int_{\tau'}^{\tau}\left(\int_0^1 \left|\frac{\partial v}{\partial y}(x,y)\right|dy\right)dx = \\
=\int_{\tau'}^{\tau}\left(\int_0^1 \left|\frac{\partial v}{\partial x}(x,y)\right|dy\right)dx = \int_0^1 \left(\int_{\tau'}^{\tau}\left|\frac{\partial v}{\partial x}(x,y)\right|dx\right)dy.
\end{multline*}
It follows that there exists $y \in [0,1]$ such that
\[
d(v(\tau,y),v(\tau',y)) \leq \int_{\tau'}^{\tau} \left|\frac{\partial v}{\partial x}(\tau,y)\right|d\tau \leq \e(\tau)^{1-\theta} - \e(\tau')^{1-\theta}.
\]
So, with $\alpha_2 = (1-\theta)\alpha_1,$ Lemma~\ref{lm:decay} implies
\[
d_g(v(\tau,y),v(\tau',y)) \leq c_1^{(1-\theta)}\left(\frac{1}{(\tau_0 - \tau)^{\alpha_2}} - \frac{1}{(\tau_0-\tau')^{\alpha_2}}\right).
\]
Finally, for arbitrary $t,t' \in [0,1],$ Corollary~\ref{cy:decay} gives
\begin{multline}\label{eq:Cauchy}
d_g(v(\tau,t),v(\tau',t')) \leq \ell_g(v_{\tau}) + d_g(v(\tau,y),v(\tau',y)) + \ell_g(v_{\tau'}) \leq \\
\leq(c_1^{(1-\theta)} + c_2)\left(\frac{1}{(\tau_0 - \tau-1)^{\alpha_2}} - \frac{1}{(\tau_0-\tau'-1)^{\alpha_2}}\right).
\end{multline}
Let $(\tau_i,t_i) \in S_0$ with $\lim_{i\to \infty} \tau_i = -\infty.$ Inequality~\eqref{eq:Cauchy} implies that $v(\tau_i,t_i)$ is a Cauchy sequence. Let
\[
p = \lim_{i \to \infty} v(\tau_i,t_i).
\]
Set $(\tau',t') = (\tau_i,t_i)$ in inequality~\eqref{eq:Cauchy} and take the limit as $i \to \infty$ to obtain the desired inequality.
\end{proof}

\begin{proof}[Proof of Theorem~\ref{tm:limit}]
Define $v: S_0 \to M$ by $v(z) = u(e^{\pi z}).$ With $p$ as in Lemma~\ref{lm:limit}, define $\bar u(0) = p.$ Estimate~\eqref{eq:Hd} follows from Corollary~\ref{cy:decay}.
\end{proof}

\begin{proof}[Proof of Theorem~\ref{tm:conti}]
For each point $p \in \overline \Sigma \setminus \Sigma,$ choose a map
\[
\phi: (H,H \cap \R) \to (\overline\Sigma,\partial \overline\Sigma)
\]
with $\phi(0) = p$ and $\phi$ a biholomorphism onto its image. Apply Theorem~\ref{tm:limit} to the map $u\circ \phi : H\setminus\{0\} \to M.$
\end{proof}

\section{Homotopy classes}
\label{sec:hc}
For the proof of Theorem~\ref{tm:main}, we need a couple more lemmas of a topological nature, which are the subject of the present section. Consider a nice Riemann surface $\Sigma$ as well as a symplectic manifold $(M,\omega)$ and a collection $\LL$ of Lagrangian submanifolds $L_i \subset M,\, i \in A,$ all as in Section~\ref{sec:intro}. Let $l$ be an $A$-labeling of $\Sigma.$ Assume, moreover, that $M,\omega$ and the Lagrangian submanifolds $L_i$ are real analytic. Let $\Map(\overline \Sigma,(M,l))$ denote the space of continuous maps $f: \overline \Sigma \to M$ such that $f(\partial\Sigma_i) \subset L_{l(i)},$ endowed with the compact open topology. Let $[\overline \Sigma,(M,l)]$ denote the set of homotopy classes of such maps, that is, the set of path connected components of~$\Map(\overline \Sigma,(M,l)).$ For $n \in \N,$ abbreviate $[n]:= \{1,\ldots,n\}.$

\begin{lm}\label{lm:count}
The set $[\overline \Sigma,(M,l)]$ is countable.
\end{lm}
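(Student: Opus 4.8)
The plan is to reduce the statement to a standard fact: a CW complex (or, more precisely, a compact manifold with boundary and corners, such as $\overline\Sigma$) has only countably many homotopy classes of maps into a manifold, and that the boundary constraint $f(\partial\Sigma_i)\subset L_{l(i)}$ does not destroy this. The only subtle point is that the real analytic Lagrangians $L_i$ may be singular along their mutual intersections, but since we are only requiring each \emph{boundary circle} $\partial\Sigma_i$ to map into a \emph{single} $L_{l(i)}$, and the $L_i$ themselves are smooth manifolds (the pathology is only in how distinct $L_i$'s intersect), each individual constraint target is a smooth manifold. So the relevant object is the space of continuous maps of a compact pair $(\overline\Sigma,\partial\overline\Sigma)$ into a pair $(M,\coprod_i L_{l(i)})$ — a perfectly classical setting.

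First I would fix a finite CW structure on $\overline\Sigma$ in which each boundary component $\partial\Sigma_i$ is a subcomplex; this is possible because $\overline\Sigma$ is a compact surface with boundary, hence triangulable, and we may subdivide so that $\partial\overline\Sigma$ is a subcomplex. Then I would invoke the standard fact that for CW pairs $(X,A)$ with $X$ compact and targets $(Y,B)$ that are (e.g.) ANRs — manifolds certainly are — the set of homotopy classes of maps of pairs $[(X,A),(Y,B)]$ is countable. One clean way to see this without heavy ANR machinery: by compactness of $\overline\Sigma$ and the compact-open topology, $\Map(\overline\Sigma,(M,l))$ is metrizable and separable (it embeds in the separable metric space $C(\overline\Sigma,M)$, and the subspace of maps respecting the boundary constraints is closed in it, hence also separable). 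A separable space has at most countably many path components provided path components are open; and path components of $\Map(\overline\Sigma,(M,l))$ are open because any map sufficiently $C^0$-close to a given $f$ can be joined to $f$ by a short geodesic homotopy in $M$ that stays inside a tubular neighborhood retracting onto the relevant $L_{l(i)}$ along the boundary — i.e. the space is locally path connected. Hence the set of path components, which is exactly $[\overline\Sigma,(M,l)]$, is countable.

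The key steps in order: (1) triangulate $\overline\Sigma$ so that each $\partial\Sigma_i$ is a subcomplex; (2) observe $C(\overline\Sigma,M)$ is separable metrizable (uniform metric from a complete metric on $M$), and that $\Map(\overline\Sigma,(M,l))$ is a closed, hence separable, subspace; (3) show $\Map(\overline\Sigma,(M,l))$ is locally path connected, using tubular neighborhoods of the $L_{l(i)}$ (here one uses that each $L_{l(i)}$ is a smooth submanifold, so it has a tubular neighborhood with a retraction, even though the union $\bigcup_i L_i$ may be badly singular); (4) conclude that a separable, locally path connected space has countably many path components.

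The main obstacle I anticipate is step (3): one must produce, for a given $f\in\Map(\overline\Sigma,(M,l))$, a neighborhood of $f$ all of whose members are homotopic to $f$ \emph{through maps still respecting the boundary constraints}. The straight-line/geodesic homotopy $f_s = \exp_{f(x)}(s\,\xi(x))$, where $\exp_{f(x)}(\xi(x)) = g(x)$, works in the interior, but on $\partial\Sigma_i$ one needs the homotopy to remain in $L_{l(i)}$. This is handled by choosing, for each $i$ appearing in the image of $l$, a tubular neighborhood $U_i$ of $L_{l(i)}$ with smooth retraction $r_i:U_i\to L_{l(i)}$, requiring $g$ to be close enough to $f$ that $g(\partial\Sigma_i)\subset U_i$, and then composing the interior geodesic homotopy with $r_i$ along the boundary, interpolating via a collar neighborhood of $\partial\overline\Sigma$ in $\overline\Sigma$. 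Since only finitely many indices $l(1),\ldots,l(b(\Sigma))$ are involved and each $L_{l(i)}$ is a genuine smooth submanifold, the finitely many tubular neighborhoods and the collar give the required uniform control, so local path connectedness holds and the countability conclusion follows.
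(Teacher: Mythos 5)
Your proof is correct, but it follows a genuinely different route from the paper's. The paper invokes \L ojasiewicz's triangulation theorem to produce a countable triangulation of $M$ in which each $L_{l(i)}$, $i \in [b(\Sigma)]$, is simultaneously a subcomplex, and then applies simplicial approximation: every $f \in \Map(\overline\Sigma,(M,l))$ is homotopic, within the mapping space, to a simplicial map from some iterated barycentric subdivision of a fixed finite triangulation of $\overline\Sigma$, and there are only countably many such simplicial maps. You instead argue point-set topologically: $\Map(\overline\Sigma,(M,l))$ is separable (as a subspace of the separable metrizable space $C(\overline\Sigma,M)$) and locally path connected (via geodesic homotopies corrected along $\partial\overline\Sigma$ by tubular-neighborhood retractions onto the individual $L_{l(i)}$ and a collar), so its path components are open and hence countable in number. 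The comparison is instructive. Your key observation --- that each boundary component is constrained to a \emph{single} smooth Lagrangian, so the possibly wild intersections $L_i \cap L_j$ never enter --- means your argument does not use the real analyticity hypothesis at all, whereas the paper's proof does (the simultaneous triangulation of $M$ with several badly intersecting submanifolds as subcomplexes is exactly where \L ojasiewicz's theorem is needed; for this lemma that strength is not actually required). Your argument is thus more elementary and slightly more general; the paper's has the virtue of reusing machinery (\L ojasiewicz's results) already central to the rest of the paper. The only small points to keep an eye on in your write-up are that separability of a subspace of a separable metric space does not require closedness (so you need not verify that the boundary conditions cut out a closed subset), and that the uniform control needed for the boundary correction is obtained on the compact set $f(\overline{\partial\Sigma_i})$, which is where properness of the embeddings $L_{l(i)} \subset M$ is quietly used.
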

\begin{proof}
By \L ojasiewicz's triangulation theorem~\cite{_Loj64_}, choose a countable simultaneous triangulation of $M$ and $L_{l(i)}$ for $i \in [b(\Sigma)].$ Choose also a finite simultaneous triangulation of $\overline \Sigma$ and $\overline{\partial \Sigma_i}$ for $i \in [b(\Sigma)]$. Let $f \in \Map(\overline \Sigma,(M,l)).$  By Theorem 3.4.8 and Corollary 3.4.4 in~\cite{_Sp_}, after possible iterated barycentric subdivision of the triangulation of $\overline \Sigma,$ there exists a simplicial approximation $g\in \Map(\overline \Sigma,(M,l))$ of $f.$ Examining the proof of Lemma 3.4.2 in~\cite{_Sp_}, we see that
\[
[g] = [f] \in [\overline \Sigma,(M,l)].
\]
Since the set of simplicial maps from an iterated barycentric subdivision of a fixed finite simplicial complex to a fixed countable simplicial complex is countable, the lemma follows.
\end{proof}

In the following, $g = g_\Theta$ for $\Theta$ an $\omega$-compatible complex structure, and $h$ is a Hermitian metric on $\overline \Sigma.$
\begin{df}\label{df:as}
We say a continuous map $f \in \Map(\overline \Sigma,(M,l))$ is \textbf{almost smooth} if $\hat f := f|_\Sigma$ is smooth, the energy $E_g(\hat f)$ is finite, and there exist constants $c,\alpha,\epsilon > 0$ such that
\begin{equation}\label{eq:fd}
|df|_{g,h}(z) < \frac{c}{d_h(p,z) (\log d_h(p,z))^\alpha}, \qquad z \in \Sigma, \quad d_h(z,p) < \epsilon.
\end{equation}
Let $[\overline\Sigma,(M,l)]^{as}\subset [\overline\Sigma,(M,l)]$ denote the set of homotopy classes that admit an almost smooth representative.
\end{df}

\begin{lm}\label{lm:as}
There is a well-defined map
\[
I_\omega : [\overline\Sigma,(M,l)]^{as} \to \R
\]
given by
\[
[f] \to \int_\Sigma \hat f^*\omega.
\]
for $f$ an almost smooth representative.
\end{lm}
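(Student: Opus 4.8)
The plan is to show that the integral $\int_\Sigma \hat f^*\omega$ is finite for an almost smooth representative, and then that it is independent of the choice of such a representative within a homotopy class. Finiteness is immediate from Lemma~\ref{lm:ei}: since $\Theta$ is $\omega$-compatible and $\hat f$ is $\Theta$-holomorphic — wait, $\hat f$ need not be holomorphic, so instead I would simply bound $\left|\int_\Sigma \hat f^*\omega\right| \leq \tfrac12 E_g(\hat f) < \infty$ using that $|\hat f^*\omega|$ is pointwise controlled by $|d\hat f|^2_{g,h}$ up to a constant depending on $h$, together with the finite-energy hypothesis in Definition~\ref{df:as}. (The decay estimate~\eqref{eq:fd} guarantees the form $\hat f^*\omega$ is integrable near the punctures, but this is already subsumed in $E_g(\hat f)<\infty$.) So the substantive content of the lemma is well-definedness.

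For well-definedness, suppose $f_0, f_1 \in \Map(\overline\Sigma,(M,l))$ are almost smooth and homotopic. Choose a homotopy $F : \overline\Sigma \times [0,1] \to M$ with $F(\cdot,k) = f_k$ and $F(\partial\Sigma_i \times [0,1]) \subset L_{l(i)}$. The difference $\int_\Sigma \hat f_1^*\omega - \int_\Sigma \hat f_0^*\omega$ should equal $\int_{\Sigma \times [0,1]} F^*d\omega = 0$ by Stokes, \emph{except} for two issues: (1) $\Sigma$ is noncompact, so there are boundary contributions at the punctures $\overline\Sigma\setminus\Sigma$; and (2) the homotopy $F$ is only continuous, not smooth, and — crucially — the Lagrangians $L_i$ may intersect each other in bad real-analytic sets, so one cannot naively smooth $F$ rel the boundary conditions. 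Issue (1) I would handle by working on the compact exhausting pieces $\overline\Sigma \setminus \bigcup_p \{d_h(p,\cdot) < \rho\}$ and letting $\rho \to 0$: the boundary integral over the small circles around each puncture is bounded, using the decay estimate~\eqref{eq:fd} for both $f_0$ and $f_1$ (and a corresponding estimate for $F$, which one arranges by interpolating), by something like $\int_{d_h(p,z)=\rho} |F^*\lambda| \lesssim \rho \cdot \sup_{d_h(p,z)=\rho}|dF|^2 \to 0$, once $\omega = d\lambda$ locally near $p$ — here I'd use that $F$ maps a neighborhood of the puncture into a small ball where $\omega$ is exact via the Poincaré lemma.

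Issue (2) is the real obstacle, and it is precisely what Sections~\ref{sec:lsa}--\ref{sec:lii} were built to handle. The point is that near the boundary $\partial\overline\Sigma$, the map $F$ restricted to a neighborhood of an arc of $\partial\Sigma_i$ takes values in $L_{l(i)}$, so $F^*\omega$ vanishes there; the genuine ambiguity is localized near the punctures $p \in \overline\Sigma\setminus\Sigma$ where boundary arcs labeled by \emph{different} indices $i,j$ come together, and there one cannot contract $F$ into a single Lagrangian. Instead I would cut out a small half-disk $\phi(H)$ around such a puncture (as in the proof of Theorem~\ref{tm:conti}) and compare $f_0$ and $f_1$ there using the \textbf{local symplectic action} $a(\gamma)$ of Theorem~\ref{tm:vg}: the path $\gamma_k$ traced by $f_k$ on the boundary of a small half-circle, running from $L_{l(i)}$ to $L_{l(j)}$, has a well-defined local symplectic action, and $\int$ of $\omega$ over the region between $\gamma_0$ and $\gamma_1$ equals $a(\gamma_1) - a(\gamma_0)$ by an argument parallel to Proposition~\ref{pr:lsa}. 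Since $a$ is continuous along the homotopy and both $\gamma_k$ shrink to points in $L_i\cap L_j$ as $\rho\to 0$ (this uses the decay~\eqref{eq:fd} again, exactly as in Lemma~\ref{lm:limit}), with $a(\gamma)\to 0$ by the isoperimetric bound~\eqref{eq:isoper} of Theorem~\ref{tm:lii}, the puncture contribution to the difference vanishes in the limit. Combining the compact-piece Stokes computation with the vanishing of all the puncture corrections gives $\int_\Sigma \hat f_1^*\omega = \int_\Sigma \hat f_0^*\omega$. I expect the bookkeeping of exactly which local charts, which Poincaré-lemma balls, and which exhausting family to use — so that all the estimates from Sections~\ref{sec:lsa}--\ref{sec:limit} apply simultaneously and uniformly along the homotopy — to be the main technical burden, while the conceptual skeleton (Stokes on compact pieces $+$ local symplectic action controlling punctures $+$ \L ojasiewicz isoperimetric decay) is clear.
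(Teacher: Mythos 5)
Your proposal is correct and follows essentially the same route as the paper: Stokes' theorem on the complement of small half-disks around the punctures, with the puncture contributions identified as differences of local symplectic actions via Proposition~\ref{pr:lsa} and killed by the \L ojasiewicz isoperimetric inequality of Theorem~\ref{tm:lii}, the decay estimate~\eqref{eq:fd}, and the continuity of $F$ at the punctures. The only step you gloss over is one the paper spells out: the continuous homotopy is first replaced by a smooth map $G$ that is $C^0$-close to $F$ and preserves the Lagrangian boundary conditions away from the punctures (possible since each boundary arc maps into a single smooth Lagrangian), and Stokes is applied to $G$ on the compact piece --- the estimate ``$\rho\cdot\sup|dF|^2$'' in your treatment of issue~(1) is not available for a merely continuous $F$, and is in any case unnecessary once the local-action argument is in place.
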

\begin{proof}
By Lemma~\ref{lm:ei}, the integral $\int_\Sigma \hat f^* \omega$ is well defined and finite. We must show that if $f_j\in \Map(\overline \Sigma,(M,l))$ for $j = 0,1,$ are almost smooth and $[f_0] = [f_1] \in [\overline\Sigma,(M,l)],$ then
\begin{equation}\label{eq:hinv}
\int_\Sigma \hat f_0^*\omega = \int_\Sigma \hat f_1^*\omega.
\end{equation}
By assumption we have a continuous map
\[
F : [0,1] \times \overline \Sigma \to M
\]
such that
\begin{equation}\label{eq:Fc}
F|_{\{j\} \times \overline \Sigma} = f_j, \quad j = 0,1, \qquad F([0,1]\times \partial\Sigma_i) \subset L_{l(i)}, \quad i \in [b(\Sigma)].
\end{equation}
If $F$ were smooth, the lemma would follow immediately from Stokes' theorem.  However, since there is no assumption on how the Lagrangian submanifolds $L_i$ intersect each other, it appears unreasonable to expect to find a homotopy $F$ that is smooth on the whole domain $\overline \Sigma \times [0,1].$ Rather we approximate $F$ by a smooth map on $\Sigma\times [0,1]$ and use Proposition~\ref{pr:lsa} together with the continuity of $F$ to work around the possible wild behavior of the approximation near points in~$\overline \Sigma \setminus \Sigma.$

Indeed, let $\epsilon_0$ be as in Theorem~\ref{tm:prec}, let $\beta,\delta_1,$ be as in Theorem~\ref{tm:lii} and let $\varrho > 0$ be arbitrary. Using the techniques of~\cite[Chapter 2]{_Hi_}, one can show there exists a smooth map
\[
G : [0,1]\times \Sigma \to M
\]
such that
\begin{gather}
G|_{\{j\} \times \Sigma} = \hat f_j, \quad j = 0,1, \notag \\ G([0,1]\times \partial\Sigma_i) \subset L_{l(i)}, \quad i \in [b(\Sigma)], \label{eq:lbc}
\end{gather}
and
\begin{equation}\label{eq:FG}
d_{g_\Theta}(F(t,z),G(t,z)) < \frac{\epsilon_0}{18}, \qquad t \in [0,1],\quad z \in \Sigma.
\end{equation}
For each $p \in \overline \Sigma \setminus \Sigma,$ choose a neighborhood $U_p$ biholomorphic to $H$ such that $U_p \cap U_q = \emptyset$ for $p \neq q$, so
\[
\widetilde \Sigma : = \overline \Sigma \setminus \left(\cup_p U_p\right)
\]
is a Riemann surface with corners. Let $I_p \subset \Sigma$ denote the topological boundary of $U_p,$ that is, the points in $U_p$ corresponding to the semi-circle in the boundary of $H.$ Thus, $I_p$ is diffeomorphic to a closed interval.
It follows from condition~\eqref{eq:fd} that by choosing $U_p$ sufficiently small, we may assume that
\begin{equation}\label{eq:lfj}
\ell_{g_\Theta}(f_j|_{I_p}) < \delta_1, \qquad \ell_{g_\Theta}(f_j|_{I_p})^\beta < \varrho.
\end{equation}
Since $E_g(\hat f_j) < \infty,$ we may choose $U_p$ small enough that
\begin{equation}\label{eq:efjUp}
\left | \int_{U_p\setminus \{p\}} \hat f^*_j \omega \right| < \varrho, \qquad p \in \overline \Sigma \setminus \Sigma, \quad j = 0,1.
\end{equation}
Furthermore, by continuity of $F,$ choosing $U_p$ sufficiently small, we may assume that
\begin{equation}\label{eq:Fzp}
d_{g_\Theta}(F(t,z), F(t,p))< \frac{\epsilon_0}{18}, \qquad t \in [0,1],\quad z \in I_p.
\end{equation}
Stokes' theorem and the Lagrangian boundary conditions~\eqref{eq:lbc} give
\begin{equation}\label{eq:StoG}
0 = \int_{[0,1]\times \widetilde\Sigma} G^*d\omega = \int_{\widetilde \Sigma}\hat f_1^*\omega - \int_{\widetilde\Sigma}\hat f_0^*\omega + \sum_{p \in \overline \Sigma\setminus \Sigma} \int_{[0,1]\times I_p} F^*\omega.
\end{equation}
By inequalities~\eqref{eq:FG} and~\eqref{eq:Fzp}, Proposition~\ref{pr:lsa} with $\kappa = F|_{[0,1]\times I_p}$ gives
\begin{equation*}
\int_{[0,1]\times I_p} F^*\omega  = a(f_1|_{I_p}) - a(f_0|_{I_p}).
\end{equation*}
It then follows from Theorem~\ref{tm:lii} and the inequalities~\eqref{eq:lfj} that
\begin{equation}\label{eq:Flii}
\left|\int_{[0,1]\times I_p} F^*\omega \right|< 2\varrho.
\end{equation}
Equation~\eqref{eq:StoG} and inequality~\eqref{eq:Flii} imply that
\begin{equation}\label{eq:St}
\left|\int_{\widetilde \Sigma}\hat f_1^*\omega - \int_{\widetilde\Sigma}\hat f_0^*\omega\right| < 2\left|\overline \Sigma \setminus \Sigma\right|\varrho.
\end{equation}
On the other hand, by inequality~\eqref{eq:efjUp}, for $j = 0,1,$ we have
\begin{equation}\label{eq:rem}
\left|\int_{\Sigma} \hat f_j^*\omega  - \int_{\widetilde\Sigma} \hat f_j^*\omega\right| = \left|\sum_{p \in \overline \Sigma \setminus \Sigma} \int_{U_p\setminus\{p\}} \hat f_j^*\omega\right| < \left|\overline \Sigma \setminus \Sigma\right|\varrho.
\end{equation}
Combining inequalities~\eqref{eq:St} and~\eqref{eq:rem}, we obtain
\[
\left|\int_{\Sigma}\hat f_1^*\omega - \int_{\Sigma}\hat f_0^*\omega\right| < 4\left|\overline \Sigma \setminus \Sigma\right|\varrho.
\]
Since $\varrho>0$ was arbitrary, the lemma follows.
\end{proof}

\section{Proofs of main theorems}
\label{_proof_Section_}

We continue using the notation in the beginning of Section~\ref{sec:hc}, and we write $[\overline\Sigma,(M,l)]^{as}$ for the set of almost smooth homotopy classes of maps as in Definition~\ref{df:as}.
\begin{proof}[Proof of Theorem~\ref{tm:main}]
Let $\Sigma$ be a nice Riemann surface and let $l$ be an $A$-labeling of $\Sigma.$ For $\Theta = xJ + yK \in R_I,$ write $\omega_\Theta = x\omega_J + y \omega_K,$ and let $I_{\omega_\Theta} : [\overline\Sigma,(M,l)]^{as}\to \R$ be as in Lemma~\ref{lm:as}. For
\[
h \in [\overline\Sigma,(M,l)]^{as},
\]
define $\phi_{h,\Sigma} : R_I \to \R$ by
\[
\phi_{h,\Sigma}(\Theta) = I_{\omega_\Theta}(h).
\]
Let $V$ be the vector space consisting of real linear combinations of $\omega_J$ and $\omega_K$. Define
\[
\tilde\phi_{h,\Sigma} : V \to \R
\]
by $\tilde\phi_{h,\Sigma}(p\omega_J + q \omega_K) = p I_{\omega_K}(h) + q I_{\omega_J}(h)$. Since $\tilde\phi_{h,\Sigma}$ is linear on $V$, the restriction of $\tilde \phi_{h,\Sigma}$ to the circle $\widetilde R_I = \{p \omega_J + q \omega_K|p^2 + q^2 = 1\}$ is either identically zero or has a unique maximum.  Since $\phi_{h,\Sigma}(\Theta) = \tilde\phi_{h,\Sigma}(\omega_\Theta)$, the same holds for $\phi_{h,\Sigma}.$ Let $P \subset R_I$ be the set of $\Theta$ such that
\[
\phi_{h,\Sigma}(\Theta) = \max_{R_I} \phi_{h,\Sigma} > 0
\]
for some nice $\Sigma,$ some $A$-labeling $l$ and some almost smooth homotopy class $h \in [\overline\Sigma,(M,l)]^{as}.$ Since the set of topological types of $\Sigma$ and choices of $l$ is countable, Lemma~\ref{lm:count} implies $P$ is countable.

Let $\Theta_0 \in R_I\setminus P,$ let $\Sigma$ be a nice Riemann surface, and let $l$ be an $A$-labeling. Let $u: \Sigma \to M$ be $\Theta_0$-holomorphic with $u(\partial\Sigma_i) \subset L_{l(i)}$ and $E_g(u) < \infty.$ We prove that $u$ is constant. Indeed, the condition $E_g(u) < \infty$ and Theorem~\ref{tm:conti} imply that $u$ extends to a continuous map
$\bar u \in \Map(\overline \Sigma,(M,l))$ that is almost smooth.
Let
\[
h_u:= [\overline u] \in [\overline\Sigma,(M,l)]^{as}
\]
be the homotopy class represented by $\bar u.$
By Lemma~\ref{lm:ei},
\[
\phi_{h_u,\Sigma}(\Theta) = \int_\Sigma u^*\omega_\Theta \leq E_g(u)
\]
with equality if and only if $u$ is $\Theta$-holomorphic, that is $\Theta = \Theta_0.$ So, $\phi_{h_u,\Sigma}$ achieves its maximum at $\Theta_0$. Since $\Theta_0 \notin P,$ it follows that
\[
E_g(u) = \phi_{h_u,\Sigma}(\Theta_0) = 0.
\]
So, $u$ must be constant.
\end{proof}

\begin{proof}[Proof of Theorem~\ref{tm:loc}]
Let $\Theta \in (R_I \setminus P)\cap \J_\omega$. If Theorem~\ref{tm:loc} were false, we could find a sequence of nice Riemann surfaces $\Sigma^j$ with $A_0$-labelings $l_j$ and
\begin{gather*}
u_j : \Sigma^j \to M, \qquad w_j \in \Sigma^j, \qquad t_j \in [0,1/j]
\end{gather*}
such that
\begin{gather*}
\bar\partial_\Theta u_j = 0, \qquad u_j((\partial\Sigma^j)_i) \subset \phi_{l_j(i),t_j}(L_{l_j(i)}),\\ E_g(u_j) \leq E_0,\qquad
\chi_c(\Sigma^j) \geq \chi_0, \qquad u_j(w_j) \in M \setminus V.
\end{gather*}
By Gromov compactness, we would have the following:
\begin{enumerate}
\item
A real number $E_\infty \in [0,E_0]$ and an integer $\chi_\infty \geq \chi_0.$
\item
A labeled graph $\Gamma$ consisting of the following data:
\begin{itemize}
\item
A finite set of vertices $V.$
\item
A finite set of half-edges $H.$
\item
A fixed point free involution $\sigma: H \to H.$ The set of orbits of $\sigma,$ denoted by $E,$ is called the set of edges of $\Gamma.$
\item
A map $\nu : H \to V$ sending each half-edge to the vertex to which it is attached.
\item
Distinguished vertices $v_0,v_\infty \in V.$
\item
A map $E : V \to \R_{\geq 0}$ such that
$
\sum_{v \in V} E(v) = E_\infty.
$
\item
A map $\chi : V \to \Z$ such that
$
\sum_{v \in V} \chi(v) = \chi_\infty.
$
\end{itemize}
\item
For each $v \in V \setminus \{v_0\},$ a connected closed Riemann surface $\Sigma^v$ and a $\Theta$-holomorphic map
\[
u_v : \Sigma^v \to M
\]
with $E_g(u_v) = E(v)$ and $\chi(\Sigma^v) - |\nu^{-1}(v)| = \chi(v).$
\item
A nice Riemann surface $\Sigma^{v_0}$ with $\chi_c(\Sigma^{v_0}) - |\nu^{-1}(v_0)| = \chi(v_0)$ and an $A_0$-labeling $l_\infty.$
\item
A $\Theta$-holomorphic map
$
u_{v_0} : \Sigma^{v_0} \to M
$
with
\[
u_{v_0}((\partial\Sigma^{v_0})_i) \subset L_{l_\infty(i)}
\]
and $E_g(u_{v_0}) = E(v_0).$
\item
For each $h \in H,$ a point $w_h \in \Sigma^{\nu(h)}$ such that for each edge $e = \{h,h'\} \in E,$ we have $u_{\nu(h)}(w_h) = u_{\nu(h')}(w_{h'}).$
\item
A point $w_\infty \in \Sigma^{v_\infty}$ such that $u_{v_\infty}(w_\infty) \in M\setminus V.$
\end{enumerate}
On the other hand, Theorem~\ref{tm:main} asserts that $u_v$ is constant for all $v \in V,$ so
\[
u_{v_\infty}(w_\infty) \in \bigcap_{i \in [b(\Sigma^{v_0})]} L_{l_\infty(i)} \subset V,
\]
which is a contradiction.
\end{proof}

\begin{rem}\label{rem:fgc}
The proof of Theorem~\ref{tm:loc} uses a partial version of Gromov compactness that keeps track of only one of the components with boundary of the limiting stable map. Full Gromov compactness that keeps track of all components with boundary of the limiting stable map would be preferable. Indeed, if the topological type of $\Sigma$ and the labeling $l$ were fixed, full compactness would allow us to replace the hypothesis
\[
\bigcup_{i \in A_0} L_i \subset V
\]
with the weaker hypothesis
\[
\bigcap_{i \in [b(\Sigma)]} L_{l(i)} \subset V.
\]
Full compactness should indeed be true if we assume the Hamiltonian flows $\phi_{i,t}$ are real analytic. The proof would use an argument similar to the proof of Theorem~\ref{tm:conti} to control the behavior of low energy holomorphic strips and thus show that components with boundary connect. We leave this for future work.
\end{rem}

\section*{Acknowledgements}
We are grateful to M. Abouzaid, S. Galkin, P. Seidel, I. Smith and G. Tian, for helpful discussions. J.S. was partially supported by ERC starting grant 337560 and ISF Grant 1747/13. M.V. was partially supported by the Russian Academic Excellence
Project `5-100' and CNPq - Process 313608/2017-2.

\hfill

\noindent {\sc Jake P. Solomon\\
Institute of Mathematics \\
Hebrew University \\
Givat Ram, Jerusalem, Israel,}\\
{\tt jake@math.huji.ac.il}

\hfill

\noindent {\sc Misha Verbitsky\\
{\sc Instituto Nacional de Matem\'atica Pura e
              Aplicada (IMPA) \\ Estrada Dona Castorina, 110\\
Jardim Bot\^anico, CEP 22460-320\\
Rio de Janeiro, RJ - Brasil }\\
also:\\
{\sc Laboratory of Algebraic Geometry,\\
National Research University HSE,\\
Department of Mathematics, 7 Vavilova Str. Moscow, Russia,}\\
\tt  verbit@mccme.ru}.


\begin{thebibliography}{GMP}

\bibitem[AS]{_AS_}
Abouzaid, M., Smith, I.,
{\em The symplectic arc algebra is formal.}
Duke Math. J. 165 (2016), no. 6, 985-–1060.

\bibitem[BF]{_BF_}
Behrend, K., Fantechi, B.,
{\em Gerstenhaber and Batalin-Vilkovisky structures on Lagrangian intersections.}
Algebra, arithmetic, and geometry: in honor of Yu. I. Manin. Vol. I, 1–-47,
Progr. Math., 269, Birkhäuser Boston, Inc., Boston, MA, 2009.

\bibitem[Be]{_Besse:Einst_Manifo_}
Besse, A.,
{\em Einstein Manifolds.}
Springer-Verlag, New York (1987).

\bibitem[Bi]{_Bi_}
Bierstone, E.,
{\em Differentiable functions.}
Bol. Soc. Brasil. Mat. 11 (1980), no. 2, 139-–189.

\bibitem[BM]{_BM88_}
Bierstone, E., Milman, P. D.,
{\em Semianalytic and subanalytic sets.}
Publications math\'{e}matiques de l'I.H.\'{E}.S., 67 (1998), 5--42.

\bibitem[BB]{_BBDJS_}
Brav, C., Bussi, V., Dupont, D., Joyce, D., Szendroi, B.,
{\em Symmetries and stabilization for sheaves of vanishing cycles.}
J. Singul. 11 (2015), 85-151.

\bibitem[Bri]{_Bri_}
Bridgeland, T.,
{\em Stability conditions on triangulated categories.}
Ann. of Math. (2) 166 (2007), no. 2, 317-–345.

\bibitem[Bry]{_Bry_}
Bryant, R. L.,
{\em Minimal Lagrangian submanifolds of K\"ahler-Einstein manifolds.}
Differential geometry and differential equations (Shanghai, 1985), 1–12,
Lecture Notes in Math., 1255, Springer, Berlin, 1987.

\bibitem[Bu]{_Bu_}
Bussi, V.,
{\em Categorification of Lagrangian intersections on complex symplectic manifolds using perverse sheaves of vanishing cycles.} Preprint,
\href{https://arxiv.org/abs/1404.1329}{arXiv:1404.1329}.

\bibitem[CM]{_CM_}
Colding, T. H., Minicozzi, W. P., II,
{\em Uniqueness of blowups and \L ojasiewicz inequalities.}
Ann. of Math. (2) 182 (2015), no. 1, 221-–285.

\bibitem[DS]{_DaS_}
D'Agnolo, A., Schapira, P.,
{\em Quantization of complex Lagrangian submanifolds.}
Adv. Math. 213 (2007), no. 1, 358–-379.

\bibitem[DG]{_DGMS_}
Deligne, P., Griffiths, P., Morgan, J., Sullivan, D.,
{\em Real homotopy theory of K\"ahler manifolds.}
Invent. Math. 29 (1975), no. 3, 245–-274.

\bibitem[Do]{_Do_}
Douglas, M. R.,
{\em Dirichlet branes, homological mirror symmetry, and stability.} Proceedings of the International Congress of Mathematicians, Vol. III (Beijing, 2002), 395–-408, Higher Ed. Press, Beijing, 2002.

\bibitem[Fl]{_Floer_}
{Floer, A.,
{\em {M}orse theory for {L}agrangian intersections.}
J. Differential Geom. 28 (1988), no. 3, 513--547.}

\bibitem[Fu]{_Fu_}
{Fukaya, K.,
\emph{Cyclic symmetry and adic convergence in Lagrangian Floer theory.}
Kyoto J. Math. 50 (2010), no. 3, 521–-590.}

\bibitem[FO1]{_FOOO_}
{Fukaya, K., Oh, Y.-G., Ohta, H., Ono, K.,
{\em Lagrangian intersection Floer theory: anomaly and obstruction.} Parts I,II.
AMS/IP Studies in Advanced Mathematics, 46.1--2. American Mathematical Society, Providence, RI; International Press, Somerville, MA, 2009.}

\bibitem[FO2]{_FO3i_}
Fukaya, K., Oh, Y.-G., Ohta, H., Ono, K.,
{\em Anti-symplectic involution and Floer cohomology.}  Geom. Topol. 21 (2017), no. 1, 1-–106.


\bibitem[Gr]{_Gromov:curves_}
Gromov, M., {\em Pseudoholomorphic curves in symplectic
manifolds.} Invent. Math. 82 (1985), no. 2, 307-347.

\bibitem[Ha]{_Ha_}
Hardt, R. M.,
{\em Some analytic bounds for subanalytic sets.} Differential geometric control theory (Houghton, Mich., 1982), 259–-267,
Progr. Math., 27, Birkh\"auser Boston, Boston, MA, 1983.

\bibitem[HL]{_Harvey_Lawson:Calibrated_}
Harvey, R., Lawson, B., {\em Calibrated geometries.}
Acta Math.  148 (1982), 47-157.

\bibitem[Hir]{_Hi_}
Hirsch, M. W.,
{\em Differential topology.}
Corrected reprint of the 1976 original, Graduate Texts in Mathematics, 33, Springer-Verlag, New York, 1997.

\bibitem[Hi]{_Hit_}
Hitchin, N. J.,
{\em The self-duality equations on a Riemann surface.}
Proc. London Math. Soc. (3) 55 (1987), no. 1, 59–-126.

\bibitem[HW]{_poly_}
Hofer, H., Wysocki, K., Zehnder, E.,
{\em Integration theory on the zero sets of polyfold Fredholm sections.}
Math. Ann. 346 (2010), no. 1, 139–-198.

\bibitem[K1]{_K1_}
Kapustin, A.,
{\em Topological strings on noncommutative manifolds.}
Int. J. Geom. Methods Mod. Phys. 1 (2004), no. 1--2, 49-–81.

\bibitem[K2]{_K2_}
Kapustin, A.,
{\em A-branes and Noncommutative Geometry.}
Preprint,
\href{https://arxiv.org/abs/hep-th/0502212}{arXiv:hep-th/0502212}.

\bibitem[KW]{_KW_}
Kapustin, A., Witten, E.,
{\em Electric-magnetic duality and the geometric Langlands program.}
Commun. Number Theory Phys. 1 (2007), no. 1, 1–-236.

\bibitem[KS1]{_KaS08_}
Kashiwara, M., Schapira, P.,
{\em Constructibility and duality for simple holonomic modules on complex symplectic manifolds.}
Amer. J. Math. 130 (2008), no. 1, 207–-237.

\bibitem[KS2]{_KaS12_}
Kashiwara, M., Schapira, P.,
{\em Deformation quantization modules.}
Ast\'erisque No. 345 (2012).

\bibitem[KM]{_KM_}
Kurdyka, K., Mostowski, T., Parusi\'nski, A.,
{\em Proof of the gradient conjecture of R. Thom.}
Ann. of Math. (2) 152 (2000), no. 3, 763–-792.

\bibitem[Lo1]{_Loj64_}
\L ojasiewicz, S.,
{\em Triangulation of semi-analytic sets.}
Ann. Scuola Norm. Sup. Pisa (3) 18 (1964), 449-–474.

\bibitem[Lo2]{_Loj65_}
\L ojasiewicz, S.,
{\em Ensembles semi-analytiques.}
Inst. Hautes \'{E}tudes Sci., Bures-sur-Yvette, 1965.

\bibitem[Lo3]{_Loj84_}
\L ojasiewicz, S.,
{\em Sur les trajectoires du gradient d'une fonction analytique.} Seminari di Geometria 1982-–1983, 115–-117, Univ. Stud. Bologna, Bologna, 1984.

\bibitem[Mc]{_McL_}
McLean, R. C.,
{\em Deformations of calibrated submanifolds.}
Comm. Anal. Geom. 6 (1998), no. 4, 705-–747.

\bibitem[MS]{_MS_}
McDuff, D., Salamon, D.,
{\em J-holomorphic curves and symplectic topology.}
Second edition. American Mathematical Society Colloquium Publications, 52. American Mathematical Society, Providence, RI, 2012.

\bibitem[MM]{_MMR_}
Morgan, J. W., Mrowka, T., Ruberman, D.,
{\em The $L^2$-moduli space and a vanishing theorem for Donaldson polynomial invariants.}
Monographs in Geometry and Topology, II. International Press, Cambridge, MA, 1994.

\bibitem[Na]{_Na_}
Nakajima, Hiraku,
{\em Instantons on ALE spaces, quiver varieties, and Kac-Moody algebras.}
Duke Math. J. 76 (1994), no. 2, 365–-416.

\bibitem[Se]{_Se_}
Seidel, P.,
{\em Fukaya categories and Picard-Lefschetz theory.}
Zurich Lectures in Advanced Mathematics. European Mathematical Society (EMS), Z\"urich, 2008.

\bibitem[SS]{_SS_}
Seidel, P., Smith, I.,
{\em A link invariant from the symplectic geometry of nilpotent slices.}
Duke Math. J. 134 (2006), no. 3, 453–-514.

\bibitem[Si]{_Si83_}
Simon, L.,
{\em Asymptotics for a class of nonlinear evolution equations, with applications to geometric problems.}
Ann. of Math. (2) 118 (1983), no. 3, 525–-571.

\bibitem[ST]{_ST_}
Solomon, J. P., Tian, G.,
{\em Entropy of Lagrangian submanifolds.} To appear.

\bibitem[STu]{_STu_}
Solomon, J. P., Tukachinsky, S. B.,
{\em Differential forms, Fukaya $A_\infty$ algebras, and Gromov-Witten axioms.} Preprint, \href{https://arxiv.org/abs/1608.01304}{arXiv:1608.01304}.

\bibitem[Sp]{_Sp_}
Spanier, E. H.,
{\em Algebraic Topology.}
Corrected reprint of 1966 original, Spinger-Verlag, New York, 1995.

\bibitem[Ta]{_Ta_}
Taubes, C. H.,
{\em $L^2$ moduli spaces on 4-manifolds with cylindrical ends.}
Monographs in Geometry and Topology, I. International Press, Cambridge, MA, 1993.

\bibitem[TY]{_TY_}
R. P. Thomas, S.-T. Yau
{\em Special Lagrangians, stable bundles and mean curvature flow.}
Comm. Anal. Geom. 10 (2002), no. 5, 1075-1113.

\bibitem[V1]{_Verbitsky:trianal_}
Verbitsky, M.,
{\em Tri-analytic subvarieties of hyper-Kaehler
manifolds.}
Geom. Funct. Anal. 5 (1995), no. 1, 92--104.


\bibitem[V2]{_Verbitsky:non-compact_subva_}
Verbitsky, M., {\em
Subvarieties in non-compact hyperK\"ahler manifolds.}
Math. Res. Lett. vol. 11 (2004),
no. 4, pp. 413--418.

\bibitem[Wo]{_Wo_}
Woodward, C. T.,
{\em Gauged Floer theory of toric moment fibers.}
Geom. Funct. Anal. 21 (2011), no. 3, 680-–749.

\bibitem[Ya]{_Ya_}
Yau, S. T.,
{\em On the Ricci curvature of a compact K\"ahler manifold and the complex Monge-Amp\`ere equation. I.}
Comm. Pure Appl. Math. 31 (1978), no. 3, 339–-411.

\end{thebibliography}
\end{document}